\documentclass[11pt]{elsarticle}
\usepackage{lineno}
\usepackage{amsmath,amssymb,amsthm,color}
\usepackage{enumitem}
\usepackage[title]{appendix}
\usepackage{hyperref}

\newtheorem{theorem}{Theorem}[section]
\newtheorem{lemma}{Lemma}[section]
\newtheorem{proposition}{Proposition}[section]

\newtheorem{remark}{Remark}[section]
\newtheorem{claim}{Claim}
\def\d{{\,\rm d}}
\def\leq{\leqslant}
\def\geq{\geqslant}
\numberwithin{equation}{section}
\journal{arXiv}
\newcommand{\norm}[1]{\left\lVert #1\right\rVert}
\newcommand{\normC}[1]{\left\lVert #1\right\rVert_{\mathcal{C}}}

\begin{document}
	
\begin{frontmatter}
	
	\title{Global exponential turnpike properties for optimal control of the viscous Burgers equation
		\tnoteref{mytitlenote}}
	\tnotetext[mytitlenote]{The last author is supported  by the National Natural Science Foundation of China under grant 12422118.}
	
	\author[mysecondaryaddress]{Emmanuel Tr\'elat}
	\ead{emmanuel.trelat@sorbonne-universite.fr}
	
	\medskip
	
	\author[my3address]{Xingwu Zeng}
\ead{xingwuzeng@whu.edu.cn}

\author[my3address]{Can Zhang}
\ead{canzhang@whu.edu.cn}

\address[mysecondaryaddress]{Sorbonne Universit\'e, CNRS, Universit\'e Paris Cit\'e, Inria, Laboratoire Jacques-Louis Lions, F-75005 Paris, France.}
\address[my3address]{School of Mathematics and Statistics, Wuhan University, Wuhan 430072, China.}

\begin{abstract}
We establish global exponential turnpike properties for quadratic optimal tracking problems governed by the one-dimensional viscous Burgers equation with localized internal control. For every initial datum, finite-horizon optimal solutions approach the unique optimal periodic regime when the periodic tracking target is sufficiently small; the zero-target case yields a global steady turnpike at the origin, with no smallness assumption on the initial datum. To our knowledge, these are the first global exponential turnpike results for the viscous Burgers equation. The proof combines a local exponential turnpike, obtained through strict convexity and periodic Riccati theory, with a parabolic dissipation argument that provides an absorbing time independent of the horizon.
\end{abstract}
	
	\begin{keyword}
viscous Burgers equation \sep exponential turnpike \sep periodic optimal control \sep optimal tracking

\medskip

\MSC[2020]  49N20 \sep 49K20 \sep 93C20 \sep 93D23

\end{keyword}

\end{frontmatter}



\section{Introduction}
\subsection{Motivation and main ideas}
The turnpike phenomenon refers to the fact that, over a sufficiently long time horizon, optimal trajectories remain for most of the time close to a distinguished steady state or reference trajectory, up to possible boundary layers near the initial and terminal times. 
To the best of our knowledge, this phenomenon was first observed in economics (see \cite{ML, SP}). 
Later, it has been extensively analyzed in the context of optimal control problems, in the deterministic case (see \cite{GG1, GSS, GTZ, PZ13, PZ, T1, TZZ, TZ_MCSS, TZZpafa, TZZ18,  TZ, ZS, za}) as well as in the stochastic case (see \cite{SWY, SY_JDE}). 
Most of the available results concern linear problems, especially of LQ type.

The turnpike property for nonlinear control problems is much more delicate.  
In the finite-dimensional setting, nonlinear turnpike properties were investigated in \cite{TZ}. 
For semilinear optimality systems, the existence of solutions satisfying a turnpike property was established in \cite{PZ}. 
A local steady-state turnpike result under smallness assumptions on the initial datum and on the static adjoint state was proved in \cite{TZZ18}. 
The competition between multiple static turnpike candidates was studied in \cite{Rapaport04,Rapaport05}; see also \cite{PD_EMS} for the influence of multiplicity of minimizers on the turnpike behavior in semilinear elliptic optimal control. 
For a class of nonlinear optimal control problems with a steady control-state tracking term, turnpike properties are established as well in \cite{EGPZ}. 
A global exponential turnpike property is proved in \cite{PD} for a semilinear control problem with a sign condition, where ``global'' means that no smallness condition is imposed on the initial datum of the state equation.

In this paper, we study the global turnpike property for the optimal control problem governed by the viscous Burgers equation. 
More precisely, we consider a large-time optimal tracking problem $ (OCP)_T $ and its periodic version $ (Per)_\theta $, where the tracking term $y_d \in C([0,\infty);L^2(0,1))$ is $\theta$-periodic in time and the states satisfy controlled Burgers equations. 

The main novelties and contributions of the present paper are twofold: we provide the first global exponential turnpike result for the viscous Burgers equation, and we further extend the analysis to the periodic tracking setting.
In this sense, we extend the local turnpike analysis for the heat equation with a convective nonlinearity, as discussed in \cite[Section 9.1]{EGPZ} (see also \cite{ZS}), to a global framework.
More specifically, we prove that: i) when $y_d\not\equiv0$ is sufficiently small, $(Per)_\theta$ has a unique $\theta$-periodic minimizer and every minimizer of $(OCP)_T$, starting from an arbitrary initial datum $y_0\in L^2(0,1)$, satisfies a global exponential turnpike estimate around that periodic minimizer (see Theorem~\ref{main1}); ii) when $y_d\equiv0$, every minimizer of $(OCP)_T$, again starting from an arbitrary initial datum, satisfies a global exponential turnpike estimate around the origin (see Theorem~\ref{main0}).

We emphasize that the global turnpike property arises from both the smallness condition on $y_d$ and the parabolic dissipation, each playing a different role. 
On the one hand, the smallness condition on $y_d$ is essential to overcome the non-convexity introduced by the Burgers nonlinearity. 
If this smallness condition is removed, the periodic problem $ (Per)_\theta $ may admit multiple optimal periodic pairs, leading to a ``competition phenomenon'' at the turnpike level.
In this situation, one may expect optimal solutions of $(OCP)_T$ to exhibit a turnpike behavior toward a set of periodic minimizers, with the selected member determined asymptotically by the entry and exit costs, provided that each candidate is locally exponentially stable.
The smallness assumption excludes this possible competition by ensuring strict coercivity of the second variation and hence the uniqueness of the periodic turnpike.
On the other hand, the parabolic dissipation yields a uniform absorbing mechanism, which allows trajectories issued from arbitrary initial data to enter, after a time independent of the horizon $T$, a neighborhood of the unique periodic optimal pair. 
The combination of these two features leads to the global exponential turnpike property proved in this paper. 

\subsection{Formulation of the optimal periodic tracking problem}
The Burgers equation is the simplest one-dimensional nonlinear evolution partial differential equation for convection-diffusion phenomena (see \cite{BJM0, BJM1}); the associated control problems have been widely studied in the literature (see \cite{CoronXiang, FG, FI, Volkwein2001}).
Long-horizon inverse design for the Burgers equation has also been investigated numerically in \cite{APZ}; that work optimizes the initial datum against a terminal target and is therefore distinct from the distributed tracking problem and the analytic turnpike results considered here.

We first introduce some notation. 
Let $ T>0 $ be the time horizon and  $ \omega \subset (0,1) $ be an open and non-empty subset with $\chi_\omega$ being its characteristic function.  
We identify controls in $L^2(\omega)$ with their extensions by zero to $(0,1)$; in particular, products such as $\chi_\omega\lambda$ are understood as elements of $L^2(0,1)$, while their control norm is taken in $L^2(\omega)$.
Throughout the work, $ H_0^1 (0,1) $ is endowed with the inner product $ \langle f,g\rangle_{H_0^1(0,1)} = \int_{0}^{1}\partial_xf(x)\partial_xg(x)\d x,\; \forall f,g\in H_0^1(0,1) $, and $ W(0,T) := \{\varphi\in L^2(0,T;H_0^1 (0,1))\,\mid\,\partial_t\varphi \in L^2(0,T;H^{-1}(0,1))\}$ is endowed with the norm
$$ 
\|\varphi\|_{W(0,T)} := \big(\|\varphi\|_{L^2(0,T;H_0^1 (0,1))}^2+\|\partial_t\varphi\|_{L^2(0,T;H^{-1}(0,1))}^2\big)^{1/2}.
$$
It is easy to check that $ W(0,T) $ is a Hilbert space and embeds continuously into $ C([0,T];L^2(0,1)) $ (see \cite{DL}), and that $ W(0,T) $ embeds compactly into $ L^2(0,T;L^{\infty}(0,1)) $ (see \cite{TR}). 
When no confusion arises, a function $ f(t,x) $ of two variables $ (t,x) $ can also be regarded as a mapping $ t\mapsto f(t) $, where for each $ t $, $ f(t) $ denotes the function $ x\mapsto f(t,x) $, understood in a suitable space. 
We use $C(\cdot), C_1(\cdot),C_2(\cdot),\ldots $ to denote  generic positive constants depending on the quantities listed in the parentheses, which may change from line to line.

For every $ y_0  \in L^2(0,1)$ and $y_d\in C([0,\infty);L^2(0,1))$, we consider the optimal tracking problem
\begin{equation*}\label{12181}
	(OCP)_T:\; \inf_{u\in L^2(0,T;L^2 (\omega))}\;\;\frac{1}{2}\int_0^T 
	\big(\| y(t)-y_d(t) \|_{L^2 (0,1)}^2+ \|u(t)\|_{L^2 (\omega)}^2\big)\d t,
\end{equation*}
where the tracking term $y_d(\cdot) $ is $\theta$-periodic in $ t $ with $\theta>0$, i.e., $y_d(t+\theta)=y_d(t),\; \forall t \geq 0$, and $y(\cdot)\in L^2(0,T;L^2 (0,1)) $ is the solution of the controlled Burgers equation 
\begin{equation}\label{state}
	\left\{
	\begin{aligned}
		&\partial_ty(t,x) -  \partial_{xx}y(t,x) + y(t,x)\partial_x y (t,x) = \chi_\omega u(t,x) , \\
		&y(t,0)=y(t,1)=0,\\
		&y(0,x) = y_0 (x),
	\end{aligned}
	\right.
\end{equation}
for $(t,x)\in (0,T)\times(0,1)$, with $ u(\cdot)\in L^2(0,T;L^2 (\omega))$.
It is well known that the equation~\eqref{state} has a unique weak solution\footnote{We say that $ y\in W(0,T) $ is a weak solution to \eqref{state}, if $ \frac{\d}{\d t}\langle y(t), \varphi\rangle +\langle \partial_xy(t), \partial_x\varphi \rangle+\langle y(t)\partial_xy(t),  \varphi \rangle=\langle \chi_\omega u(t), \varphi\rangle $ a.e. $ t\in[0,T] $ for all $ \varphi\in H_0^1(0,1) $, and $ y(0)=y_0\in L^2(0,1), $ where $\langle\cdot,\cdot\rangle$ is the inner product in $ L^2(0,1) $. } $ y \in W(0,T) $ (see \cite[Theorem 3]{Volkwein2001}).

We define the associated periodic optimal control problem
\begin{equation*}\label{xiao-2}
	(Per)_\theta:\; \inf\;\;  
	\frac{1}{2}\int_0^\theta 
	\big(\| y(t)-y_d(t) \|_{L^2 (0,1)}^2+  \|u(t)\|_{L^2 (\omega)}^2\big)\d t,
\end{equation*}
over all $(y(\cdot),u(\cdot))\in W(0,\theta)\times L^2(0,\theta;L^2 (\omega))$ satisfying
\begin{equation}\label{per_state}
	\left\{
	\begin{aligned}
		&\partial_ty(t,x) -  \partial_{xx}y(t,x) + y(t,x) \partial_xy (t,x) = \chi_\omega u(t,x) ,\\
		&y(t,0)=y(t,1)=0,\\
		&y(0,x) = y(\theta,x),
	\end{aligned}
	\right.
\end{equation}
for $(t,x)\in (0,\theta)\times(0,1)$.
Since $y_d$ is continuous and $\theta$-periodic in time, we abbreviate, here and throughout,
\begin{equation}\label{def:normC}
	\normC{y_d} := \|y_d\|_{C([0, \infty);L^2 (0,1))} = \max_{t\in[0,\theta]}\|y_d(t)\|_{L^2(0,1)}.
\end{equation}

\subsection{Main results}
The main result is the following global exponential turnpike property. 
In the following theorems, we denote by $\left(y_{T,*}(\cdot), u_{T,*}(\cdot)\right)$ any optimal pair of $(OCP)_T$, and by $\left(y_{\theta,*}(\cdot), u_{\theta,*}(\cdot)\right)$ the unique optimal pair of $(Per)_\theta$, extended to the whole real line by $\theta$-periodicity.
\begin{theorem}[Periodic turnpike]\label{main1}
	There exists $ \rho >0$ such that, for any $\theta$-periodic tracking term $y_d\in C([0, \infty);L^2 (0,1))$ satisfying
	\begin{equation}\label{smallness_target_varepsilon}
		\normC{y_d} \leq \rho,
	\end{equation} $(Per)_\theta$ admits a unique optimal pair $\left(y_{\theta,*}(\cdot), u_{\theta,*}(\cdot)\right)$, and
there exists $\mu:=\mu(\theta,\omega,y_d)>0$ such that, for every $ y_0\in L^2(0,1) $, there are constants $K:=K(\theta,\omega,\|y_0\|_{L^2(0,1)},y_d)>0$ and $\tau:=\tau(\theta,\omega,\|y_0\|_{L^2(0,1)},y_d)>0$ such that, for all $T> \tau$ and a.e.~$t \in[\tau,T]$,
	\begin{equation}\label{turnpike1}
		\|y_{T,*}(t) - y_{\theta,*}(t)\|_{L^2(0,1)} + \|u_{T,*}(t) - u_{\theta,*}(t)\|_{L^2(\omega)}\leq K\big(e^{-\mu t }+e^{-\mu(T-t)} \big).
	\end{equation}
\end{theorem}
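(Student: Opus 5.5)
The proof combines a local exponential turnpike with a global absorption argument. The constants $\mu$ and the neighbourhood size come from the local part and depend only on $(\theta,\omega,y_d)$. The entry time $\tau$ comes from the absorption part and depends additionally on $\|y_0\|_{L^2(0,1)}$, but not on $T$.

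\emph{Step 1 (the periodic problem).} Existence of a minimizer of $(Per)_\theta$ follows from the direct method. Minimizing sequences are bounded in $W(0,\theta)$ thanks to the energy identity
\[
\tfrac12\tfrac{d}{dt}\|y\|^2+\|\partial_x y\|^2=\langle \chi_\omega u,y\rangle,
\]
which uses $\langle y\partial_x y,y\rangle=0$. The compact embedding $W(0,\theta)\hookrightarrow L^2(0,\theta;L^\infty(0,1))$ then lets us pass to the limit in the nonlinear term $y\partial_x y$.

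Comparing with the admissible pair $(0,0)$ gives a cost bounded by $\tfrac{\theta}{2}\normC{y_d}^2$. Hence any minimizer satisfies $\|y_{\theta,*}\|_{W(0,\theta)}+\|u_{\theta,*}\|\le C\rho$, and by the Pontryagin system the adjoint $\lambda_{\theta,*}$ is also $O(\rho)$.

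For uniqueness, compute the second variation of the Lagrangian at any critical point:
\[
\int_0^\theta \Big(\|z\|^2+\|v\|^2+2\langle \lambda_{\theta,*}, z\partial_x z\rangle\Big)\d t .
\]
The last term is bounded by $C\rho\,\|z\|_{W}^2$. Linearized periodic controllability or stabilizability of the heat equation from $\omega$ bounds $\|z\|_W$ by $\|v\|+\|z\|_{L^2}$. So for $\rho$ small the second variation is uniformly coercive, the reduced functional is strictly convex on the small ball containing all critical points, and the minimizer is unique.

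\emph{Step 2 (local turnpike).} Linearize the optimality system of $(OCP)_T$ around $(y_{\theta,*},u_{\theta,*},\lambda_{\theta,*})$. Using the periodic Riccati operator of the linearized LQ problem, which is well posed because of the coercivity from Step 1 and the stabilizability from $\omega$, the linearized two-point boundary value problem decouples. Its solutions satisfy the dichotomy bound $C(e^{-\mu t}+e^{-\mu(T-t)})$ in terms of the boundary data.

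A fixed-point argument in exponentially weighted spaces then yields the following. There is $\delta>0$ such that if $\|y(\tau)-y_{\theta,*}(\tau)\|\le\delta$, the optimal trajectory on $[\tau,T]$ satisfies \eqref{turnpike1}. We must also rule out other, far-away critical points of the tail problem. For this we use strict convexity near the turnpike together with the cost bounds of Step 3, which force the optimal trajectory to lie in the basin.

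\emph{Step 3 (absorption).} This step is the main obstacle. Comparison with an explicit control gives a cost bound uniform in $T$:
\[
J_T(u_{T,*})\le C(\|y_0\|)+(T/\theta+1)\,C\rho^2 .
\]
The explicit control steers $y_0$ to $y_{\theta,*}(t_0)$ in a fixed time $t_0$, using null controllability of Burgers from $\omega$ between small targets after free dissipation, then follows the periodic pair.

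The energy identity together with Poincaré gives
\[
\tfrac{d}{dt}\|y\|^2+2\pi^2\|y\|^2\le 2\|u\|\|y\|,
\]
so $\|y_{T,*}(t)\|$ stays bounded by a constant depending on $\|y_0\|$. The key difficulty is turning the averaged bound $\int_0^T\|y_{T,*}-y_d\|^2\lesssim C(\|y_0\|)+T\rho^2$ into a pointwise entry into the $\delta$-ball at a time $\tau$ independent of $T$. The cost excess above $T\cdot(\text{periodic value})/\theta$ is only bounded by a constant, so I would argue via a dissipation-type inequality. Specifically, I would show that the excess cost controls $\int\|y_{T,*}-y_{\theta,*}\|^2$ up to a constant, by strict dissipativity with a storage function built from the periodic adjoint, which is small since $\rho$ is small. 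Consequently the set of times where $\|y_{T,*}-y_{\theta,*}\|>\delta$ has measure at most $C(\|y_0\|)/\delta^2$. Choose $\tau$ inside this window at the first good time, and apply Step 2 on $[\tau,T]$ via the dynamic programming principle. Near $t=T$ the term $e^{-\mu(T-t)}$ absorbs the exit layer, which gives \eqref{turnpike1} with $K$ depending on $\|y_0\|$ through the bounds above.
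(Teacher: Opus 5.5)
Your Steps 1--2 match the paper: strict convexity on small control balls, periodic Riccati decoupling, a Schauder fixed point, and identification with the minimizer by convexity. Your absorption step, however, is a genuinely different route.

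\emph{How the paper absorbs.} The paper never proves an integral turnpike. In Lemma~\ref{lemma_absorbing_time} it takes the first time $t_M$ at which $\|y_{T,*}(t)\|_{H_0^1}\leq\varepsilon_M/2$. It bounds the cost on $(0,t_M)$ from below by $t_M\varepsilon_M^2/8-R^2/2$ via \eqref{diffineq}. It bounds the tail cost from below by $\frac{T-\hat t_M}{\theta}\inf J_\theta-C$, using the local turnpike itself together with Lemma~\ref{lemma_estimate_functional}. It then compares with Lemma~\ref{upper_bound_value}; since the periodic cost rate is at most $\varepsilon_M^2/16$, $t_M$ is capped independently of $T$. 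That upper bound simply applies $u_{\theta,*}$ from $y_0$: the error decays in $L^2$ because the cubic term vanishes and $y_{\theta,*}$ is small. So your controllability-based steering is unnecessary.

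\emph{Your dissipativity claim holds globally.} You left this as ``I would show'', but it is true. With $e=y-y_{\theta,*}$, $w=u-u_{\theta,*}$ and $\ell$ the running cost, the quadratic nonlinearity gives the exact identity
\[
\ell(t,y,u)-\ell(t,y_{\theta,*},u_{\theta,*})-\frac{\d}{\d t}\langle\lambda_{\theta,*},e\rangle=\frac12\big\langle(1-\partial_x\lambda_{\theta,*})e,e\big\rangle+\frac12\|w\|_{L^2(\omega)}^2 ,
\]
with no remainder. Moreover $1-\partial_x\lambda_{\theta,*}\geq\frac12$ by Lemma~\ref{lem:periodic_regularity}; this is exactly the operator $G$ of \eqref{Gpositive}.

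Compare $u_{T,*}$ with $u_{\theta,*}$ applied from $y_0$. The boundary terms are then $O(\rho)\|e\|_{L^2}$, and they are absorbed using the $L^2$ energy estimate for $e$. This gives
\[
\int_0^T\big(\|e\|_{H_0^1}^2+\|w\|_{L^2(\omega)}^2\big)\d t\leq C\big(\|y_0\|_{L^2}^2+\rho^2\big)
\]
uniformly in $T$, with no use of the local turnpike.

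\emph{The tail version.} Run the same comparison on the tail from an entry time $\hat t$ with $e(\hat t)$ small, using dynamic programming. This yields $\int_{\hat t}^T\|u_{T,*}-u_{\theta,*}\|^2\leq C(\delta+\rho)^2$, hence small slice norms. By Lemma~\ref{regular_lambda}, it follows that $u_{T,*}\in B_{r_0}$. This is what makes your sentence ``rule out far-away critical points'' rigorous, and it can replace the controllability argument of Appendix~\ref{app:A}. The bare cost bound $C+CT\rho^2$ would not suffice, since it allows the control to concentrate in a window.

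\emph{Two details.} First, select the entry time by Chebyshev on $\int\|e\|_{H_0^1}^2$ rather than $\int\|e\|_{L^2}^2$. Proposition~\ref{local_turnpike} needs $H_0^1$-smallness of the datum, not $L^2$-closeness. Second, on what each route buys: the paper's route needs no storage function or exact second-order expansion, only the energy inequality and the smallness of the periodic value. The price is invoking the full local turnpike, including its Appendix~\ref{app:A} input, before absorption. Yours yields an explicit integral turnpike first and makes the local step more self-contained.
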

When $y_d\equiv0$, the constant function $y_d$ is $\theta$-periodic for every $\theta>0$; applying Theorem~\ref{main1} with $\theta=1$, the smallness assumption \eqref{smallness_target_varepsilon} is satisfied and the unique optimal triple $\left(y_{\theta,*}(\cdot),u_{\theta,*}(\cdot),\lambda_{\theta,*}(\cdot)\right) $ of $ (Per)_\theta $ reduces to $\left(0,0,0\right)$. 
Fixing $\theta=1$ and absorbing this fixed choice into the constants, the resulting constants depend only on $\omega$ and $\|y_0\|_{L^2(0,1)}$. Hence, as a direct corollary of Theorem~\ref{main1}, we obtain the following global exponential steady turnpike property.
\begin{theorem}[Steady turnpike]\label{main0}
		If $y_d\equiv0$, then there exists $\mu:=\mu(\omega)>0$ such that, for every $ y_0\in L^2(0,1) $, there are constants $K:=K(\omega,\|y_0\|_{L^2(0,1)})>0$ and $\tau:=\tau(\omega,\|y_0\|_{L^2(0,1)})>0$ such that, for all $T> \tau$ and a.e.~$t \in[\tau,T]$,
		\begin{equation*}\label{steady_turnpike1}
			\|y_{T,*}(t)\|_{L^2(0,1)} + \|u_{T,*}(t) \|_{L^2(\omega)}\leq K\big(e^{-\mu t }+e^{-\mu(T-t)} \big).
		\end{equation*}
\end{theorem}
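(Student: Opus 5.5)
The plan is to derive Theorem~\ref{main0} directly from Theorem~\ref{main1}, as the remark preceding the statement indicates. The argument has two parts: verify that the hypotheses of Theorem~\ref{main1} hold for $y_d\equiv0$, and identify the periodic turnpike explicitly as zero.

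First, for $y_d\equiv0$ the function $y_d$ is continuous and $\theta$-periodic for every $\theta>0$. We fix $\theta=1$. Then $\normC{y_d}=0\leq\rho$, so the smallness condition \eqref{smallness_target_varepsilon} holds whatever the value of $\rho>0$ supplied by Theorem~\ref{main1}. Theorem~\ref{main1} therefore applies. It yields a unique optimal pair $(y_{1,*},u_{1,*})$ of $(Per)_1$, a rate $\mu=\mu(1,\omega,0)$, and, for each $y_0$, constants $K=K(1,\omega,\|y_0\|_{L^2(0,1)},0)$ and $\tau=\tau(1,\omega,\|y_0\|_{L^2(0,1)},0)$ for which \eqref{turnpike1} holds for all $T>\tau$ and a.e.\ $t\in[\tau,T]$.

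Second, we identify $(y_{1,*},u_{1,*})$. The pair $(y,u)=(0,0)$ belongs to $W(0,1)\times L^2(0,1;L^2(\omega))$ and satisfies \eqref{per_state}: the zero function solves the Burgers equation with zero control, satisfies the Dirichlet conditions, and trivially satisfies $y(0)=y(1)$. Its cost is $\frac12\int_0^1(\|0\|^2+\|0\|^2)\d t=0$. Since the functional is nonnegative, $(0,0)$ is a minimizer of $(Per)_1$. By the uniqueness part of Theorem~\ref{main1}, $(y_{1,*},u_{1,*})=(0,0)$, and its periodic extension to $\mathbb{R}$ is also zero. Substituting this into \eqref{turnpike1} gives exactly the estimate of Theorem~\ref{main0}.

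Finally, we track the constants. Because $\theta=1$ and $y_d=0$ are fixed choices, $\mu$ depends only on $\omega$, while $K$ and $\tau$ depend only on $\omega$ and $\|y_0\|_{L^2(0,1)}$, as the statement claims. There is no real obstacle here. The only point needing care is that uniqueness in Theorem~\ref{main1} is what forces the turnpike to be $(0,0)$ rather than some other minimizer, and this is immediate since any minimizer must have cost $0$, which forces $u=0$ and $y=0$ anyway.
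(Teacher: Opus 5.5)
Your proposal is correct and follows the paper's own route: the paper also obtains Theorem~\ref{main0} as a direct corollary of Theorem~\ref{main1} with $\theta=1$, noting that the unique periodic optimal triple is $(0,0,0)$. It likewise absorbs the fixed choices $\theta=1$ and $y_d\equiv0$ into the constants, which leaves $\mu=\mu(\omega)$ and $K,\tau$ depending only on $\omega$ and $\|y_0\|_{L^2(0,1)}$.
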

Several remarks are in order.
\begin{remark}\label{rem:constants}
	Theorems~\ref{main1} and \ref{main0} provide the first global exponential turnpike results for the Burgers control problem. The proof uses an exponential stability rate $\mu>0$ for the Riccati closed-loop evolution operator in \eqref{prop2}. After the admissible target radius has been fixed sufficiently small, $\mu$ is chosen uniformly on that target ball and under time shifts; a sharper rate for a fixed problem may of course depend on the full periodic optimal pair, and hence on the profile of $y_d$. The constants $\tau$ and $K$ depend on $\theta$, $\omega$, $y_d$ and, in addition, on $\left\|y_0\right\|_{L^2(0,1)}$; none of these constants depends on $T$.
\end{remark}
\begin{remark}\label{rem:comparisonLQ}
	Compared with our previous work \cite{TZZ} on linear-quadratic problems, the periodic turnpike estimate established in Theorem~\ref{main1} requires a small tracking term $y_d$ and holds only on the time interval $[\tau,T]$, excluding an initial short segment. Although the proof provides a uniform positive lower bound for the decay rate on a sufficiently small target ball, the sharper rate attached to a fixed periodic closed loop may depend on the profile of $y_d$, and the dependence of $K$ on the initial datum $y_0$ is not as explicit as in \cite{TZZ}. These differences are due to the nonlinearity of the equation.
\end{remark}
\begin{remark}\label{rem:mechanism}
	The global nature of Theorem~\ref{main1} comes from the combination of local coercivity and parabolic dissipation. The smallness of $y_d$ ensures the uniqueness of the periodic optimal pair $(y_{\theta,*},u_{\theta,*})$, the exponential stability of the periodic Riccati closed loop with rate $\mu>0$, and a local turnpike result (Proposition~\ref{local_turnpike}). Dissipation yields a uniform absorbing time $\tau$, independent of $T$, after which the trajectory enters the local critical regime (Lemma~\ref{lemma_absorbing_time}); on the remaining interval the local turnpike estimate applies.
\end{remark}
\begin{remark}\label{rem:steadysimpler}
	In the steady case $y_d\equiv0$ of Theorem~\ref{main0}, the proof simplifies substantially. Indeed, comparing any optimal control with the null control and using the free exponential decay of the uncontrolled Burgers equation yields $\|u_{T,*}\|_{L^2(0,T;L^2(\omega))}^2 \leq 2J_T(u_{T,*})\leq 2J_T(0)\leq \frac{1}{2\pi^2}\|y_0\|_{L^2(0,1)}^2$, a bound which is uniform in $T$; hence the uniform slice conclusion \eqref{uniform_bound_optima1} holds directly in this case, without a smallness condition on $y_0$, and the controllability arguments of Appendix~\ref{app:A} are not needed. We do not develop this shortcut separately, since Theorem~\ref{main0} follows anyway from Theorem~\ref{main1}.
\end{remark}
\begin{remark}\label{rem:openproblem}
	Fix $\theta>0$ and let the target $y_d=y_d(x)\in L^2(0,1)$ be steady and sufficiently small for Proposition~\ref{local_uniqueness} to apply. Since the problem is invariant under time translation, each time translate of its unique periodic optimal pair is again optimal; uniqueness therefore forces this pair to be steady. It follows that it is the unique minimizer of the static optimization problem
	\begin{equation*}
		\inf \big(\| y-y_d \|_{L^2 (0,1)}^2+ \|u\|_{L^2 (\omega)}^2\big)
		\quad\text{subject to}\quad
		-  \partial_{xx}y + y\,\partial_x y = \chi_\omega u \text{ in } (0,1),\ y(0)=y(1)=0.
	\end{equation*}
	Indeed, every static admissible pair is periodic, while the preceding translation argument shows that the periodic minimizer is static. When the smallness assumption on $y_d$ is dropped, the static problem may admit several minimizers (see \cite{PD_EMS} for related nonuniqueness phenomena in semilinear elliptic optimal control), and one may then expect a competition between several turnpike candidates, in the spirit of \cite{Rapaport04,Rapaport05,T1}: an optimal trajectory of $(OCP)_T$ could approach the \emph{set} of static minimizers, with the selected member determined by the entry and exit layers. Establishing a global turnpike property in this multi-attractor regime for the Burgers equation is an interesting open problem.
\end{remark}

The paper is organized as follows. In Section~\ref{sec:pre}, we establish uniform a priori estimates for the optimality systems and for optimal solutions, and we prove the local uniqueness of the minimizers of $(OCP)_T$ and $(Per)_\theta$. In Section~\ref{sec:periodicturnpike}, we state a local turnpike result, prove two cost comparison lemmas and a global absorbing property, and combine them to prove Theorem~\ref{main1}. The proofs of the three key technical results (uniform slice bounds, strict convexity, local turnpike) are given in Appendices~\ref{app:A}, \ref{app:B} and \ref{app:C}, respectively.

\section{Preliminaries}\label{sec:pre}
\subsection{Auxiliary inequalities and notation}\label{sec:notation}
Given $\bar T>0$ and $T>0$, we set $ N_T := \lfloor T/\bar{T}\rfloor $ (integer part) and we consider the partition of $ (0,T) $ into the slices
\begin{equation}\label{def:slices}
	I_i := \big((i-1)\bar{T},\, i\bar{T}\big)\quad\text{for } 1\leq i\leq N_T,
	\qquad
	I_{N_T+1} := \big(N_T\bar{T},\, T\big),
\end{equation}
the last slice being empty when $ T=N_T\bar{T} $. The value of $\bar{T}$ is specified wherever the slices are used ($\bar T =4$ in Lemma~\ref{uniform_bound_optima} and Appendix~\ref{app:A}, $\bar T =1$ in Appendix~\ref{app:B}).

We shall repeatedly use the following elementary inequalities:
\begin{itemize}
	\item[(i)] (Agmon) $ \norm{f}_{L^\infty(0,1)}^2\leq 2 \norm{f}_{L^2(0,1)}\norm{\partial_x f}_{L^2(0,1)} $ for every $f\in H_0^1(0,1)$;
	\item[(ii)] (Poincar\'e) $ \pi^2\norm{f}_{L^2(0,1)}^2\leq \norm{\partial_x f}_{L^2(0,1)}^2 $ for every $f\in H_0^1(0,1)$;
	\item[(iii)] (Young) $ ab\leq \frac{3\varepsilon^{4/3}}{4}\,a^{4/3}+\frac{1}{4\varepsilon^{4}}\,b^{4} $ for all $a,b\geq0$ and $\varepsilon>0$;
	\item[(iv)] for every $\varepsilon>0$, every $u\in L^2(0,1)$ and every $v\in H_0^1(0,1)$,
	\begin{equation}\label{ineq:corecive}
		\left|\int_0^1 u\, v\, \partial_x v  \d x \right|
		\leq \frac{3\sqrt{2}\,\varepsilon^{4/3}}{4}\norm{u}_{L^2(0,1)}\norm{\partial_x v}_{L^2(0,1)}^2
		+\frac{\sqrt{2}}{4\varepsilon^{4}}\norm{u}_{L^2(0,1)}\norm{v}_{L^2(0,1)}^2 .
	\end{equation}
\end{itemize}
The estimate \eqref{ineq:corecive} follows from (i) and (iii): indeed,
$\big|\int_0^1 u v \partial_x v\big| \leq \norm{u}_{L^2}\norm{v}_{L^\infty}\norm{\partial_x v}_{L^2}\leq \sqrt{2}\norm{u}_{L^2}\,\norm{v}_{L^2}^{1/2}\norm{\partial_x v}_{L^2}^{3/2} $,
and it remains to apply (iii) with $a=\norm{\partial_x v}_{L^2}^{3/2}$ and $b=\norm{v}_{L^2}^{1/2}$.
We refer to \eqref{ineq:corecive} as the \emph{coercivity estimate}: combined with (ii), it shows that the quadratic form $v\mapsto \norm{\partial_x v}_{L^2}^2-\int_0^1 uvv_x$ remains coercive as long as $\norm{u}_{L^2(0,1)}$ is small enough.
All constants below are independent of the time horizon $T$.

\subsection{Optimality systems and a priori estimates}\label{subsec:OS}
The problem $ (OCP)_T $ has at least one optimal solution, denoted by $ \left(y_{T,*}(\cdot),u_{T,*}(\cdot)\right) $, which, along with an adjoint state $ \lambda_{T,*}(\cdot) \in W(0,T) $, solves the following optimality system with $\left(y_{T}(\cdot),u_{T}(\cdot),\lambda_{T}(\cdot)\right)=\left(y_{T,*}(\cdot),u_{T,*}(\cdot),\lambda_{T,*}(\cdot)\right)$ (see \cite[Theorem 6]{Volkwein2001})
\begin{equation}\label{maxim}
	\left\{
	\begin{aligned}
		&\partial_t y_T (t,x) - \partial_{xx} y_T (t,x) + y_T (t,x)\partial_x y_T (t,x) = \chi_\omega \lambda_T(t,x) , \\
		&-\partial_t \lambda_T (t,x) - \partial_{xx} \lambda_T (t,x) - y_T (t,x) \partial_x \lambda_T (t,x) = - (y_T (t,x)-y_d (t,x)), \\
		&y_T (t,0)=y_T (t,1)=0,\quad \lambda_T (t,0)=\lambda_T (t,1)=0,\\
		&y_T (0,x) = y_0 (x),\quad \lambda_T (T,x)=0,
	\end{aligned}
	\right.
\end{equation}
for $(t,x)\in (0,T)\times (0,1)$,
and
\begin{equation}\label{contrT}
	u_T (t,x) = \chi_\omega\lambda_T(t,x), \quad \text{a.e. } (t,x)\in (0,T)\times (0,1).
\end{equation}

We first state a basic energy estimate. We give it for an arbitrary control, since it will also be applied, in Appendix~\ref{app:A}, to competitors that are not related to any optimality system.
\begin{lemma}\label{energyT}
	Let $T>0$, $y_0\in L^2(0,1)$, $u\in L^2(0,T;L^2(\omega))$, and let $y$ be the solution of \eqref{state}. Then, for all $ 0\leq t_0< t_1 \leq T $,
	\begin{equation}\label{lemma1.2.}
		\sup_{t\in[t_0,t_1]}\|y(t)\|_{L^2(0,1)} + \|y\|_{L^2(t_0,t_1;H_0^1 (0,1))}\leq 2\left(\|y(t_0)\|_{L^2(0,1)} + \|u\|_{L^2(t_0,t_1;L^2 (\omega))}\right).
	\end{equation}
	In particular, \eqref{lemma1.2.} holds for the state $y_T$ of \eqref{maxim}--\eqref{contrT}, with $u=\chi_\omega\lambda_T$.
\end{lemma}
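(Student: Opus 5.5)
The plan is the standard $L^2$ energy identity, in which the Burgers nonlinearity drops out. Testing \eqref{state} with $y(t)$ itself, which is legitimate for $y\in W(0,T)$ by the Lions--Magenes identity $\frac{\d}{\d t}\|y\|^2_{L^2}=2\langle \partial_t y,y\rangle_{H^{-1},H_0^1}$, the convective term vanishes:
$$
\int_0^1 y^2\partial_x y\d x=\tfrac13\big[y^3\big]_0^1=0 .
$$
This holds for $y\in H_0^1(0,1)$ since $y^3\in W^{1,1}$ with zero boundary values. We therefore obtain, for a.e.\ $t$,
$$
\tfrac12\frac{\d}{\d t}\|y(t)\|_{L^2}^2+\|\partial_x y(t)\|_{L^2}^2=\langle \chi_\omega u(t),y(t)\rangle\leq \|u(t)\|_{L^2(\omega)}\|y(t)\|_{L^2}.
$$

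Next we integrate from $t_0$ to $t\in[t_0,t_1]$. Set $M:=\sup_{s\in[t_0,t_1]}\|y(s)\|_{L^2}$ and $U:=\|u\|_{L^2(t_0,t_1;L^2(\omega))}$. By Cauchy--Schwarz in time and Poincar\'e (ii),
$$
\int_{t_0}^{t}\|u\|\,\|y\|\d s\leq U\,\|y\|_{L^2(t_0,t_1;L^2)}\leq \pi^{-1}U\,\|y\|_{L^2(t_0,t_1;H_0^1)}.
$$
Writing $G:=\|y\|_{L^2(t_0,t_1;H_0^1)}$, this gives
$$
\tfrac12\|y(t)\|^2+\int_{t_0}^t\|\partial_xy\|^2\leq \tfrac12\|y(t_0)\|^2+\pi^{-1}UG .
$$
Taking the supremum over $t$ in the first term and $t=t_1$ in the second, and adding, we get
$$
\tfrac12 M^2+G^2\leq \|y(t_0)\|^2+2\pi^{-1}UG .
$$

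The final step is to absorb the cross term with Young: $2\pi^{-1}UG\leq \tfrac12 G^2+2\pi^{-2}U^2$. This yields $M^2+G^2\leq 2\|y(t_0)\|^2+4\pi^{-2}U^2\leq 2\|y(t_0)\|^2+U^2$. Hence
$$
M+G\leq \sqrt{2(M^2+G^2)}\leq \sqrt{4\|y(t_0)\|^2+2U^2}\leq 2(\|y(t_0)\|+U),
$$
which is \eqref{lemma1.2.}. The constants are loose, so there is room to spare.

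The only delicate point is the justification of the energy identity and the cancellation of the nonlinear term at the regularity $W(0,T)$. The term $y\,\partial_x y\in L^1(0,T;L^2)$ makes sense against $y\in L^2(0,T;L^\infty)$ by Agmon (i), and the weak formulation in the footnote extends by density to test functions $\varphi=y(t)$. The final assertion follows by taking $u=\chi_\omega\lambda_T$, which is admissible since $\lambda_T\in W(0,T)\subset L^2(0,T;L^2)$.
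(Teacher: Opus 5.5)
Your proof is correct and follows essentially the same route as the paper: test with $y$, use $\int_0^1 y^2\partial_x y\,\mathrm{d}x=0$, then Poincar\'e and Young. The only difference is in the order of steps. The paper applies Young and Poincar\'e pointwise in time to obtain $\frac{\d}{\d t}\|y\|_{L^2(0,1)}^2+\|\partial_x y\|_{L^2(0,1)}^2\leq\|u\|_{L^2(\omega)}^2$ (its inequality \eqref{diffineq}, which it reuses later) and then integrates, whereas you integrate first and absorb the cross term via Cauchy--Schwarz in time; both routes give the stated constant.
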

\begin{proof}
	Multiplying the equation by $y$ and using $\int_0^1 y\,\partial_x y\, y \d x=\frac13\left(y^3(t,1)-y^3(t,0)\right)=0$, we obtain
	\begin{equation*}
		\frac{1}{2}\frac{\d}{\d t}\norm{y}_{L^2(0,1)}^2+\norm{\partial_x y}_{L^2(0,1)}^2=\int_\omega uy
		\leq \frac{1}{2}\norm{u}_{L^2(\omega)}^2+\frac{1}{2}\norm{y}_{L^2(0,1)}^2
		\leq \frac{1}{2}\norm{u}_{L^2(\omega)}^2+\frac{1}{2}\norm{\partial_x y}_{L^2(0,1)}^2,
	\end{equation*}
	by the Poincar\'e inequality (ii). Hence
	\begin{equation}\label{diffineq}
		\frac{\d}{\d t}\norm{y}_{L^2(0,1)}^2+\norm{\partial_x y}_{L^2(0,1)}^2 \leq \norm{u}_{L^2(\omega)}^2 .
	\end{equation}
	Integrating \eqref{diffineq} on $[t_0,t]$ for $t\in[t_0,t_1]$ shows that both $\sup_{[t_0,t_1]}\|y(t)\|^2_{L^2(0,1)}$ and $\|y\|^2_{L^2(t_0,t_1;H_0^1(0,1))}$ are bounded above by $\|y(t_0)\|^2_{L^2(0,1)}+\|u\|^2_{L^2(t_0,t_1;L^2(\omega))}$, whence \eqref{lemma1.2.}.
\end{proof}

The next lemma bounds the state, uniformly in $T$, in terms of the norms of the control on the slices \eqref{def:slices}.
\begin{lemma}\label{lem_y_upper_bound_by_sub_inter}
	Let $\bar T>0$ and set
	\begin{equation}\label{def:K}
		K_{\mathrm{sl}}:=K_{\mathrm{sl}}(\bar{T}):=\left(\frac{2\big(2-e^{-\pi^2\bar{T}}\big)}{1-e^{-\pi^2\bar{T}}}\right)^{1/2}\ (\geq\sqrt{2}).
	\end{equation}
	For every $T>0$, $y_0\in L^2(0,1)$ and $u\in L^2(0,T;L^2(\omega))$, the solution $y$ of \eqref{state} satisfies
	\begin{equation}\label{y_upper_bound_by_sub_inter}
		\|y\|_{C([0,T];L^2(0,1))}\leq K_{\mathrm{sl}}\Big(\|y_0\|_{L^2(0,1)} + \sup_{1\leq i\leq N_T+1}\|u\|_{L^2(I_i;L^2 (\omega))}\Big).
	\end{equation}
\end{lemma}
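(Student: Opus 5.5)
We will derive the bound from the differential inequality \eqref{diffineq} established in the proof of Lemma~\ref{energyT}, together with a discrete Gronwall argument across the slices \eqref{def:slices}. By the Poincar\'e inequality (ii), \eqref{diffineq} yields
\begin{equation*}
\frac{\d}{\d t}\norm{y(t)}_{L^2(0,1)}^2+\pi^2\norm{y(t)}_{L^2(0,1)}^2\leq \norm{u(t)}_{L^2(\omega)}^2 .
\end{equation*}
Integrating on a subinterval $[s,t]\subset[0,T]$ then gives
\begin{equation*}
\norm{y(t)}^2\leq e^{-\pi^2(t-s)}\norm{y(s)}^2+\int_s^t\norm{u}_{L^2(\omega)}^2 .
\end{equation*}

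Write $a_i:=\norm{y(i\bar T)}_{L^2(0,1)}^2$ for $0\leq i\leq N_T$, and $M:=\sup_{1\leq i\leq N_T+1}\norm{u}_{L^2(I_i;L^2(\omega))}$. Taking $s=(i-1)\bar T$ and $t=i\bar T$ gives the recursion $a_i\leq q\,a_{i-1}+M^2$ with $q:=e^{-\pi^2\bar T}<1$. Iterating it, and summing the geometric series, we get
\begin{equation*}
a_i\leq a_0+\frac{M^2}{1-q}\quad\text{for all } 0\leq i\leq N_T .
\end{equation*}
Now fix any $t\in[0,T]$. It lies in some slice whose left endpoint is $(i-1)\bar T$ (or $N_T\bar T$ for the last slice). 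The same integrated inequality on that slice gives
\begin{equation*}
\norm{y(t)}^2\leq a_{i-1}+M^2\leq \norm{y_0}^2+M^2\Big(\frac{1}{1-q}+1\Big)=\norm{y_0}^2+\frac{2-q}{1-q}\,M^2 .
\end{equation*}

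It remains to take square roots. Using $\sqrt{A+B}\leq\sqrt A+\sqrt B$ together with $1\leq \big(2(2-q)/(1-q)\big)^{1/2}$, we obtain
\begin{equation*}
\norm{y(t)}\leq \norm{y_0}+\Big(\frac{2-q}{1-q}\Big)^{1/2}M\leq K_{\mathrm{sl}}\big(\norm{y_0}+M\big).
\end{equation*}
The stated constant $K_{\mathrm{sl}}$ even carries an extra factor $\sqrt2$ of slack, which also covers the cruder route $(A+B)\leq 2\max$. Continuity of $y$ in $L^2(0,1)$ comes from the embedding $W(0,T)\hookrightarrow C([0,T];L^2(0,1))$, so the supremum over $t$ is the $C([0,T];L^2)$ norm.

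The only point requiring care is justifying the energy identity for weak solutions, namely that $\frac{\d}{\d t}\norm{y}^2=2\langle\partial_t y,y\rangle$ holds in $W(0,T)$ and that the cubic term vanishes. This is exactly what was already used in Lemma~\ref{energyT}, so we may quote \eqref{diffineq} directly. The remainder is bookkeeping, chiefly checking that the constant matches \eqref{def:K}.
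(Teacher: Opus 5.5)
Your proof is correct and is essentially the paper's argument. The paper integrates $\frac{\d}{\d t}\|y\|^2+\pi^2\|y\|^2\leq\|u\|^2_{L^2(\omega)}$ into a Duhamel bound and estimates the weight $e^{-\pi^2(t-s)}$ on each slice by $e^{-\pi^2(n-i)\bar T}$, which is exactly what your recursion $a_i\leq q\,a_{i-1}+M^2$ gives once unrolled; your bookkeeping even keeps $\|y_0\|^2_{L^2(0,1)}$ with coefficient $1$ rather than $\frac{2-q}{1-q}$, and either way $K_{\mathrm{sl}}$ suffices.
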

\begin{proof}
	By \eqref{diffineq} and the Poincar\'e inequality,
	$ \frac{\d}{\d t}\norm{y}_{L^2(0,1)}^2+\pi^2\norm{y}_{L^2(0,1)}^2 \leq \norm{u}_{L^2(\omega)}^2 $,
	whence, for every $t\in(0,T]$,
	\begin{equation}\label{y_L2}
		\|y(t)\|_{L^2(0,1)}^2\leq e^{-\pi^2 t}\|y_0\|_{L^2(0,1)}^2+\int_0^t e^{-\pi^2 (t-s)}\|u(s)\|_{L^2(\omega)}^2 \d s .
	\end{equation}
	Set $S:=\sup_{1\leq i\leq N_T+1}\|u\|_{L^2(I_i;L^2(\omega))}$ and $n:=\lfloor t/\bar T\rfloor$. For $s\in I_i$ with $i\leq n$ one has $e^{-\pi^2(t-s)}\leq e^{-\pi^2(t-i\bar T)}\leq e^{-\pi^2(n-i)\bar T}$, while $e^{-\pi^2(t-s)}\leq1$ on $(n\bar T,t)$. Hence
	\begin{equation}\label{int_u}
		\int_0^t e^{-\pi^2 (t-s)}\|u(s)\|_{L^2(\omega)}^2 \d s
		\leq \Big(\sum_{k= 0}^{\infty}e^{-\pi^2 k\bar T }+1\Big)S^2
		=\frac{2-e^{-\pi^2\bar{T}}}{1-e^{-\pi^2\bar{T}}}\,S^2 .
	\end{equation}
	Combining \eqref{y_L2}--\eqref{int_u} gives 
	$$
	\|y(t)\|^2_{L^2(0,1)}\leq \frac{2-e^{-\pi^2\bar{T}}}{1-e^{-\pi^2\bar{T}}}\left(\|y_0\|^2_{L^2(0,1)}+S^2\right)\leq \frac{K_{\mathrm{sl}}^2}{2}\left(\|y_0\|_{L^2(0,1)}+S\right)^2,
	$$
	whence \eqref{y_upper_bound_by_sub_inter}.
\end{proof}

We now derive uniform bounds for the adjoint state of the optimality system \eqref{maxim}--\eqref{contrT}, under a smallness condition on the initial datum and on the slice norms of the adjoint state.
\begin{lemma}\label{regular_lambda}
	Let $\bar T>0$, let $K_{\mathrm{sl}}=K_{\mathrm{sl}}(\bar T)$ be as in \eqref{def:K}, and let $R_1 \in \big(0, \frac{1}{4\sqrt{2}K_{\mathrm{sl}}}\big)$. Let $(y_T,u_T,\lambda_T)$ be a solution of \eqref{maxim}--\eqref{contrT} such that
	\begin{equation}\label{slice-small}
		\|y_0\|_{L^2(0,1)}\leq R_1\qquad\text{and}\qquad \sup_{1\leq i\leq N_T+1}\|\lambda_T\|_{L^2(I_i;L^2 (\omega))}\leq R_1 .
	\end{equation}
	Then the following statements hold.
	\begin{itemize}
		\item[(i)] For all $ 0\leq t_0\leq t_1 \leq T $,
		\begin{equation}\label{rho_exp_decay}
			\|\lambda_T(t_0)\|_{L^2(0,1)}^2\leq e^{-\frac{t_1-t_0}{2}}\|\lambda_T(t_1)\|_{L^2(0,1)}^2+2\int_{t_0}^{t_1} e^{-\frac{s-t_0}{2}}\,\|y_T(s)-y_d(s)\|_{L^2(0,1)}^2 \d s .
		\end{equation}
		\item[(ii)] Setting $K_1:=K_1(\bar T):=K_{\mathrm{sl}}\left(\frac{2\bar T}{1-e^{-\bar T/2}}\right)^{1/2}$,
		\begin{equation}\label{eq_3.20}
			\|\lambda_T\|_{C([0,T];L^2(0,1))}\leq K_1\Big(\|y_0\|_{L^2(0,1)} + \sup_{1\leq i\leq N_T+1}\|\lambda_T\|_{L^2(I_i;L^2 (\omega))}+\normC{y_d}\Big).
		\end{equation}
	\end{itemize}
\end{lemma}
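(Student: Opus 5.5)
Part (i) will come from a backward energy estimate for the adjoint equation, with the coercivity estimate \eqref{ineq:corecive} absorbing the transport term. Part (ii) will then follow by bounding $\|y_T-y_d\|$ through Lemma~\ref{lem_y_upper_bound_by_sub_inter} and summing the exponential weights slice by slice.

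\emph{Step 1 (state bound).} Since $u_T=\chi_\omega\lambda_T$, Lemma~\ref{lem_y_upper_bound_by_sub_inter} and \eqref{slice-small} give
\[
\|y_T\|_{C([0,T];L^2(0,1))}\leq K_{\mathrm{sl}}(R_1+R_1)=2K_{\mathrm{sl}}R_1<\frac{1}{2\sqrt2}.
\]
So $y_T(t)$ is uniformly small in $L^2(0,1)$ for every $t$.

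\emph{Step 2 (backward energy inequality).} I multiply the adjoint equation by $\lambda_T$ and integrate in $x$:
\[
-\tfrac12\tfrac{\d}{\d t}\|\lambda_T\|^2+\|\partial_x\lambda_T\|^2-\int_0^1 y_T\lambda_T\partial_x\lambda_T=-\langle y_T-y_d,\lambda_T\rangle .
\]
The trilinear term is bounded by \eqref{ineq:corecive} with $u=y_T$ and $v=\lambda_T$, using Step 1. Taking $\varepsilon=1$ gives the bound
\[
\tfrac{3\sqrt2}{4}\|y_T\|\,\|\partial_x\lambda_T\|^2+\tfrac{\sqrt2}{4}\|y_T\|\,\|\lambda_T\|^2\leq \tfrac38\|\partial_x\lambda_T\|^2+\tfrac18\|\lambda_T\|^2 .
\]
With Poincaré, $\tfrac18\|\lambda_T\|^2\le \tfrac{1}{8\pi^2}\|\partial_x\lambda_T\|^2$, so at least about half of $\|\partial_x\lambda_T\|^2$ remains. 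For the right-hand side, Young's inequality gives $|\langle y_T-y_d,\lambda_T\rangle|\le \|y_T-y_d\|^2+\tfrac14\|\lambda_T\|^2$. Using Poincaré once more ($\pi^2\gg 1$), I obtain
\[
-\tfrac{\d}{\d t}\|\lambda_T\|^2+\tfrac12\|\lambda_T\|^2\leq 2\|y_T-y_d\|^2 .
\]
This is a generous margin. Multiplying by $e^{-(s-t_0)/2}$ and integrating over $s\in[t_0,t_1]$ yields \eqref{rho_exp_decay}. The rigorous justification of $\frac{\d}{\d t}\|\lambda_T\|^2$ is the standard one for $\lambda_T\in W(0,T)$. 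This is the only step requiring care, because the constants have to close; the choice of $R_1$ is exactly what makes them close.

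\emph{Step 3 (uniform bound).} I apply (i) with $t_1=T$, so that $\lambda_T(T)=0$ and only the integral term remains. For $s\in[0,T]$,
\[
\|y_T(s)-y_d(s)\|\leq K_{\mathrm{sl}}\Big(\|y_0\|+\sup_i\|\lambda_T\|_{L^2(I_i;L^2(\omega))}\Big)+\normC{y_d}=:M.
\]
I keep $K_{\mathrm{sl}}$ general here rather than using the smallness, and use $K_{\mathrm{sl}}\geq1$ to absorb the $\normC{y_d}$ term. Then
\[
\|\lambda_T(t_0)\|^2\le 2M^2\int_{t_0}^{\infty}e^{-(s-t_0)/2}\d s\leq 4M^2 .
\]
To match the stated constant $K_1$, I will instead split the integral over consecutive windows of length $\bar T$, bounding each window by $\bar T M^2$ times the geometric weight $e^{-k\bar T/2}$. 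Summing gives
\[
2\bar T M^2\sum_{k\geq0}e^{-k\bar T/2}=\frac{2\bar T}{1-e^{-\bar T/2}}\,M^2 ,
\]
and taking square roots gives \eqref{eq_3.20} with $K_1=K_{\mathrm{sl}}\big(\frac{2\bar T}{1-e^{-\bar T/2}}\big)^{1/2}$.
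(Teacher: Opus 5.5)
Your proposal is correct and follows essentially the paper's proof. It uses the same state bound from Lemma~\ref{lem_y_upper_bound_by_sub_inter}, the same energy estimate for the adjoint equation with the coercivity estimate \eqref{ineq:corecive} (you integrate backward directly with $\varepsilon=1$, while the paper reverses time and tunes $\varepsilon$ so that the coercivity constant equals $\frac12$, a cosmetic difference), and the same window-by-window summation for (ii). Incidentally, your direct bound $4M^2$ already implies \eqref{eq_3.20}, since $1-e^{-\bar T/2}\leq \bar T/2$ gives $\frac{2\bar T}{1-e^{-\bar T/2}}\geq 4$, so the splitting into windows is not needed.
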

\begin{proof}
	Since $u_T=\chi_\omega\lambda_T$, Lemma~\ref{lem_y_upper_bound_by_sub_inter} and \eqref{slice-small} give
	\begin{equation}\label{y^T_upper_bounded_sub_inte}
		\|y_T\|_{C([0,T];L^2(0,1))}\leq K_{\mathrm{sl}}\Big(\|y_0\|_{L^2(0,1)}+\sup_{1\leq i\leq N_T+1}\|\lambda_T\|_{L^2(I_i;L^2(\omega))}\Big)\leq 2K_{\mathrm{sl}}R_1<\frac{1}{2\sqrt{2}} .
	\end{equation}
	(i) Fix $t_1\in(0,T]$ and reverse time: for $\tau\in[0,t_1]$ set $\rho(\tau):=\lambda_T(t_1-\tau)$, $\tilde y(\tau):=y_T(t_1-\tau)$ and $\tilde y_d(\tau):=y_d(t_1-\tau)$. By \eqref{maxim}, $\rho$ solves the forward equation $\partial_\tau\rho-\partial_{xx}\rho-\tilde y\,\partial_x\rho=\tilde y_d-\tilde y$ with homogeneous Dirichlet boundary conditions. Multiplying by $\rho$ yields
	\begin{equation}\label{rho_energy}
		\frac{1}{2}\frac{\d}{\d \tau}\norm{\rho}_{L^2(0,1)}^2+\norm{\partial_x\rho}_{L^2(0,1)}^2-\int_0^1\tilde y\,\rho\,\partial_x\rho \d x=\langle\tilde y_d-\tilde y,\rho\rangle .
	\end{equation}
	By the coercivity estimate \eqref{ineq:corecive} with $u=\tilde y(\tau)$, the bound \eqref{y^T_upper_bounded_sub_inte} and the Poincar\'e inequality,
	\begin{equation}\label{uniform_coercive_rho}
		\norm{\partial_x\rho}^2_{L^2(0,1)}-\int_0^1\tilde y\rho\partial_x\rho
		\geq \Big(\pi^2\Big(1-\frac{3\sqrt{2}\,\varepsilon^{4/3}}{2}K_{\mathrm{sl}}R_1\Big)-\frac{\sqrt{2}K_{\mathrm{sl}}R_1}{2\varepsilon^{4}}\Big)\norm{\rho}_{L^2(0,1)}^2
		=:C_{R_1}(\varepsilon)\norm{\rho}_{L^2(0,1)}^2 ,
	\end{equation}
	provided $\varepsilon>0$ is such that the coefficient of $\norm{\partial_x\rho}^2$ is nonnegative. Since $4\sqrt2 K_{\mathrm{sl}}R_1<1$, one has $C_{R_1}(1)\geq\pi^2\left(1-\frac{3}{8}\right)-\frac18>\frac{1}{2}$, while $C_{R_1}(\varepsilon)\to-\infty$ as $\varepsilon\to0^+$; hence there exists $\varepsilon\in(0,1]$ with $C_{R_1}(\varepsilon)=\frac{1}{2}$, which we fix. Then \eqref{rho_energy}, \eqref{uniform_coercive_rho} and Young's inequality $\langle\tilde y_d-\tilde y,\rho\rangle\leq\frac{1}{4}\norm{\rho}^2+\norm{\tilde y-\tilde y_d}^2$ give
	\begin{equation*}
		\frac{\d}{\d \tau}\norm{\rho}_{L^2(0,1)}^2+\frac{1}{2}\norm{\rho}_{L^2(0,1)}^2\leq 2\norm{\tilde y(\tau)-\tilde y_d(\tau)}_{L^2(0,1)}^2 ,
	\end{equation*}
	and Gr\"onwall's inequality on $[0,t_1-t_0]$, written back in the original time variable, yields \eqref{rho_exp_decay}.
	
\smallskip
	\noindent
	(ii) Taking $t_1=T$ in \eqref{rho_exp_decay} and using $\lambda_T(T)=0$, we get, for every $t\in[0,T]$,
	\begin{equation*}
		\|\lambda_T(t)\|_{L^2(0,1)}^2\leq 2\int_{t}^{T} e^{-\frac{s-t}{2}}\|y_T(s)-y_d(s)\|_{L^2(0,1)}^2 \d s
		\leq 2\sum_{k\geq 1}e^{-\frac{(k-1)\bar T}{2}}\,\bar T\,\|y_T-y_d\|_{C([0,T];L^2(0,1))}^2 ,
	\end{equation*}
	where we covered $(t,T)$ by the windows $\big(t+(k-1)\bar T,\,t+k\bar T\big)$, $k\geq1$, of length at most $\bar T$. Hence
	$ \|\lambda_T\|^2_{C([0,T];L^2(0,1))}\leq \frac{2\bar T}{1-e^{-\bar T/2}}\|y_T-y_d\|^2_{C([0,T];L^2(0,1))} $,
	and \eqref{eq_3.20} follows from \eqref{y^T_upper_bounded_sub_inte}, \eqref{def:normC} and $K_{\mathrm{sl}}\geq1$.
\end{proof}

We now turn to the periodic problem. The problem $(Per)_\theta$ admits at least one optimal pair (see, e.g., \cite{B}); any optimal pair $\left(y_{\theta,*}(\cdot),u_{\theta,*}(\cdot)\right)$, together with a $\theta$-periodic adjoint state $\lambda_{\theta,*}(\cdot)\in W(0,\theta)$, solves the following optimality system with $\left(y_{\theta}(\cdot),u_{\theta}(\cdot),\lambda_{\theta}(\cdot)\right)=\left(y_{\theta,*}(\cdot),u_{\theta,*}(\cdot),\lambda_{\theta,*}(\cdot)\right)$ (the derivation is analogous to that of \eqref{maxim}--\eqref{contrT}, see \cite[Theorem 6]{Volkwein2001})
\begin{equation}\label{maximp}
	\left\{
	\begin{aligned}
		&\partial_t y_\theta (t,x) - \partial_{xx} y_\theta (t,x) + y_\theta (t,x)\partial_x y_\theta (t,x) = \chi_\omega \lambda_\theta(t,x) , \\
		&-\partial_t \lambda_\theta (t,x) - \partial_{xx} \lambda_\theta (t,x) - y_\theta (t,x) \partial_x \lambda_\theta (t,x) = - (y_\theta (t,x)-y_d (t,x)), \\
		&y_\theta (t,0)=y_\theta (t,1)=0, \quad \lambda_\theta (t,0)=\lambda_\theta (t,1)=0, \\
		&y_\theta (0,x) = y_\theta (\theta,x), \quad \lambda_\theta (0,x)=\lambda_\theta (\theta,x), 
	\end{aligned}
	\right.
\end{equation}
for $(t,x)\in (0,\theta)\times (0,1)$, 
and
\begin{equation}\label{contrP}
	u_\theta (t,x) = \chi_\omega\lambda_\theta(t,x), \quad \text{a.e. } (t,x)\in (0,\theta)\times (0,1).
\end{equation}
Note that, when $u=0$, the only $\theta$-periodic solution of \eqref{per_state} is $y\equiv0$ (by \eqref{diffineq} and the Poincar\'e inequality, $\|y(\theta)\|_{L^2(0,1)}\leq e^{-\pi^2\theta/2}\|y(0)\|_{L^2(0,1)}$, which forces $y(0)=0$ by periodicity). Consequently $J_\theta(0)=\frac{1}{2}\int_0^\theta\|y_d\|^2_{L^2(0,1)} \d t$ and every optimal control of $(Per)_\theta$ satisfies
\begin{equation}\label{uthetasmall}
	\|u_{\theta,*}\|_{L^2(0,\theta;L^2(\omega))}\leq \sqrt{2J_\theta(0)}\leq \sqrt{\theta}\,\normC{y_d} .
\end{equation}

The following lemma provides uniform estimates for the periodic system. Part (i) is stated for an \emph{arbitrary} admissible periodic pair, since it will be applied in this generality (in Lemma~\ref{lemma_estimate_functional} and in Appendix~\ref{app:B}); part (ii) is stated for the corresponding periodic adjoint state, defined as the (unique) $\theta$-periodic solution of the linear backward equation in \eqref{maximp}, without imposing the feedback relation \eqref{contrP}.
\begin{lemma}\label{Energy}
	Let $\theta>0$ and set $ C_2:=C_2(\theta):= 1+\sqrt{2+1/\theta} $.
	\begin{itemize}
		\item[(i)] Every admissible pair $(y_\theta,u_\theta)$ of \eqref{per_state} satisfies
		\begin{equation}\label{lemma1.3.}
			\|y_\theta\|_{C([0,\theta];L^2(0,1))} + \|y_\theta\|_{L^2(0,\theta;H_0^1 (0,1))}\leq C_2\,\|u_\theta\|_{L^2(0,\theta;L^2 (\omega))} ,
		\end{equation}
		and moreover $y_\theta\in C([0,\theta];H_0^1(0,1))$ with
		\begin{equation}\label{y_p_H01}
			\|y_\theta\|_{C([0,\theta];H_0^1(0,1))}^2 \leq \Big(4+\frac1\theta\Big)\,e^{\frac{8}{\pi}\|u_\theta\|^2_{L^2(0,\theta;L^2(\omega))}}\ \|u_\theta\|^2_{L^2(0,\theta;L^2(\omega))} .
		\end{equation}
		\item[(ii)] Let $R_2>0$ be such that $\sqrt{2}\,C_2R_2\leq\frac12$ and assume $\|u_\theta\|_{L^2(0,\theta;L^2(\omega))}\leq R_2$. Then the backward equation in \eqref{maximp}, with state $y_\theta$, admits a unique $\theta$-periodic solution $\lambda_\theta$, and there exists $C_3=C_3(\theta,R_2)>0$ such that
		\begin{equation}\label{lemma1.3}
			\|\lambda_\theta\|_{C([0,\theta];L^2(0,1))} + \|\lambda_\theta\|_{L^2(0,\theta;H_0^1 (0,1))}\leq C_3\,\|y_\theta-y_d\|_{L^2(0,\theta;L^2 (0,1))} .
		\end{equation}
	\end{itemize}
	In view of \eqref{uthetasmall}, the assumption of (ii) holds for every optimal pair of $(Per)_\theta$ as soon as $\sqrt{\theta}\,\normC{y_d}\leq R_2$ (and $\sqrt2 C_2R_2\leq\frac12$).
\end{lemma}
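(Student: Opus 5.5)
We prove (i) by energy estimates exploiting periodicity, and (ii) by a Gr\"onwall argument in $H_0^1$, a contraction argument for the periodic adjoint, and reuse of the coercivity estimate from Lemma~\ref{regular_lambda}.

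\emph{Step 1: $L^2$ bound in (i).} Integrate \eqref{diffineq} over $[0,\theta]$. By periodicity $\|y_\theta(\theta)\|=\|y_\theta(0)\|$, so
$\|y_\theta\|^2_{L^2(0,\theta;H_0^1)}\leq\|u_\theta\|^2_{L^2(0,\theta;L^2(\omega))}$.
For the sup bound, use \eqref{y_L2} together with periodicity: $\|y(0)\|^2=\|y(\theta)\|^2\leq e^{-\pi^2\theta}\|y(0)\|^2+\|u\|^2$, hence $\|y(0)\|^2\leq\|u\|^2/(1-e^{-\pi^2\theta})$, and then $\sup_t\|y(t)\|^2\leq\|y(0)\|^2+\|u\|^2$.
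A cruder route matches the stated constant. There is some $t^*$ with $\|y(t^*)\|^2\leq\theta^{-1}\int_0^\theta\|y\|^2\leq(\pi^2\theta)^{-1}\|u\|^2$. Integrating \eqref{diffineq} over one period starting at $t^*$ then gives $\sup\|y\|^2\leq(1+1/\theta)\|u\|^2$. This yields \eqref{lemma1.3.} with $C_2=1+\sqrt{2+1/\theta}$ after adjusting the constants.

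\emph{Step 2: $H_0^1$ bound in (i).} Test the equation with $-\partial_{xx}y$:
$$\tfrac12\tfrac{d}{dt}\|y_x\|^2+\|y_{xx}\|^2\leq \|u\|\|y_{xx}\|+\|y\|_{L^\infty}\|y_x\|\|y_{xx}\|.$$
Agmon gives $\|y\|_{L^\infty}^2\leq 2\|y\|\|y_x\|\leq (2/\pi)\|y_x\|^2$. After Young,
$$\tfrac{d}{dt}\|y_x\|^2\leq 2\|u\|^2+c\|y_x\|^4.$$
Choose $t^*$ with $\|y_x(t^*)\|^2\leq\theta^{-1}\|u\|^2$, which is possible by Step 1. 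Apply Gr\"onwall with rate $c\|y_x\|^2$, whose time integral is at most $c\|u\|^2$ by Step 1, over one period from $t^*$. This gives $\|y_x(t)\|^2\leq(2+1/\theta)e^{c\|u\|^2}\|u\|^2$, i.e.\ \eqref{y_p_H01} up to the exact constants ($c=8/\pi$ after tracking). Continuity in $H_0^1$ follows from the standard maximal-regularity argument once $y_x\in L^\infty$ and $y_{xx}\in L^2$.

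\emph{Step 3: existence and uniqueness in (ii).} Let $\Phi$ be the solution operator mapping a terminal value $\lambda(\theta)=\lambda_1$ to $\lambda(0)$ for the linear backward equation. By \eqref{lemma1.3.}, $\|y_\theta\|_{C(L^2)}\leq C_2R_2\leq 1/(2\sqrt2)$, which is exactly the regime of the coercivity estimate used in Lemma~\ref{regular_lambda}(i). Hence the homogeneous equation decays: $\|\rho(0)\|^2\leq e^{-\theta/2}\|\rho(\theta)\|^2$. So the affine map $\lambda_1\mapsto\lambda(0)$ is a strict contraction on $L^2$, and Banach's fixed point theorem gives a unique periodic solution.

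\emph{Step 4: the estimate \eqref{lemma1.3}.} Apply \eqref{rho_exp_decay} on $[0,\theta]$ with $\lambda(0)=\lambda(\theta)$ to get
$$(1-e^{-\theta/2})\|\lambda(0)\|^2\leq 2\|y_\theta-y_d\|^2_{L^2(0,\theta;L^2)},$$
and the same reasoning at any starting time gives the sup bound. The $L^2(H_0^1)$ bound follows by integrating the energy identity \eqref{rho_energy} over a period, with periodicity cancelling the boundary terms and the coercivity estimate absorbing the convective term.

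The main obstacle is Step 2, namely obtaining the exponential-in-$\|u\|^2$ constant cleanly via Gr\"onwall.
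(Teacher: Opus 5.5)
Your proposal is correct and follows essentially the paper's proof. Both use periodicity plus the energy inequality for the $L^2$ bounds, then test with $-\partial_{xx}y_\theta$ and apply Gr\"onwall with rate controlled by the a priori bound $\int_0^\theta\|\partial_x y_\theta\|^2_{L^2}\,dt\le E:=\|u_\theta\|^2_{L^2(0,\theta;L^2(\omega))}$. For (ii), both use the coercivity estimate under $\|y_\theta\|_{C([0,\theta];L^2)}\le\frac{1}{2\sqrt2}$ to get a contraction of the period map and the one-period bound. The only difference is in the $H_0^1$ step: you start Gr\"onwall at a single good time $t^*$ with $\|\partial_x y_\theta(t^*)\|^2_{L^2}\le E/\theta$ and run it over one full period, whereas the paper averages over the starting time and then applies Gr\"onwall a second time from $0$. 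Your version is slightly shorter and gives the sharper bound $(2+\frac1\theta)e^{4E/\pi}E$ (the tracked constant is $c=4/\pi$, not $8/\pi$), which implies the stated one.
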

\begin{proof}
	(i) Set $E:=\|u_\theta\|^2_{L^2(0,\theta;L^2(\omega))}$. As in the proof of Lemma~\ref{energyT}, the pair $(y_\theta,u_\theta)$ satisfies \eqref{diffineq} on $(0,\theta)$. Integrating \eqref{diffineq} over $(0,\theta)$ and using $\|y_\theta(\theta)\|_{L^2}=\|y_\theta(0)\|_{L^2}$ gives
	\begin{equation}\label{H01}
		\int_0^\theta \norm{y_\theta(t)}^2_{H_0^1(0,1)}\d t\leq E .
	\end{equation}
	Integrating \eqref{diffineq} over $(s,\theta)$ for $s\in(0,\theta)$, averaging in $s$ over $(0,\theta)$ and using the periodicity, the Poincar\'e inequality and \eqref{H01}, we obtain
	\begin{equation*}
		\|y_\theta(0)\|^2_{L^2(0,1)}=\|y_\theta(\theta)\|^2_{L^2(0,1)}\leq \frac1\theta\int_0^\theta\|y_\theta(s)\|^2_{L^2(0,1)} \d s+E\leq \Big(\frac{1}{\theta}+1\Big)E ,
	\end{equation*}
	whence, integrating \eqref{diffineq} over $(0,t)$, $\sup_{t\in[0,\theta]}\|y_\theta(t)\|^2_{L^2(0,1)}\leq \|y_\theta(0)\|^2_{L^2}+E\leq(2+\frac1\theta)E$. Together with \eqref{H01} this proves \eqref{lemma1.3.}.
	
	We next prove \eqref{y_p_H01}; the computations below are justified by a standard Galerkin approximation. Multiplying the equation by $-\partial_{xx}y_\theta$ and using the Agmon and Poincar\'e inequalities $\norm{y_\theta}_{L^\infty(0,1)}^2\leq2\norm{y_\theta}_{L^2}\norm{\partial_x y_\theta}_{L^2}\leq \frac{2}{\pi}\norm{\partial_x y_\theta}^2_{L^2}$, we get
	\begin{align*}
		\frac{1}{2}\frac{\d}{\d t}\norm{\partial_x y_\theta}_{L^2}^2+\norm{\partial_{xx}y_\theta}_{L^2}^2
		&\leq \norm{y_\theta}_{L^\infty}\norm{\partial_x y_\theta}_{L^2}\norm{\partial_{xx}y_\theta}_{L^2}+\norm{u_\theta}_{L^2(\omega)}\norm{\partial_{xx}y_\theta}_{L^2}\\
		&\leq \frac{1}{2}\norm{\partial_{xx}y_\theta}_{L^2}^2+\frac{2}{\pi}\norm{\partial_x y_\theta}_{L^2}^4+\norm{u_\theta}_{L^2(\omega)}^2 ,
	\end{align*}
	so that $\frac{\d}{\d t}\norm{\partial_x y_\theta}^2_{L^2}\leq g(t)\norm{\partial_x y_\theta}^2_{L^2}+2\norm{u_\theta}^2_{L^2(\omega)}$ with $g:=\frac{4}{\pi}\norm{\partial_x y_\theta}^2_{L^2}$ and, by \eqref{H01}, $\int_0^\theta g\leq \frac{4}{\pi}E=:G$. Gr\"onwall's inequality between $s$ and $\theta$ ($0\leq s\leq\theta$) gives $\norm{\partial_x y_\theta(\theta)}^2_{L^2}\leq e^{G}\big(\norm{\partial_x y_\theta(s)}^2_{L^2}+2E\big)$; averaging in $s$ over $(0,\theta)$ and using \eqref{H01} and the periodicity,
	\begin{equation*}
		\norm{\partial_x y_\theta(0)}^2_{L^2(0,1)}=\norm{\partial_x y_\theta(\theta)}^2_{L^2(0,1)}\leq e^{G}\Big(\frac1\theta+2\Big)E .
	\end{equation*}
	Applying Gr\"onwall's inequality once more, this time between $0$ and $t\in[0,\theta]$, yields
	\begin{equation*}
		\sup_{t\in[0,\theta]}\norm{\partial_x y_\theta(t)}^2_{L^2}\leq e^{G}\big(\norm{\partial_x y_\theta(0)}^2_{L^2}+2E\big)\leq e^{2G}\big(4+\tfrac1\theta\big)E ,
	\end{equation*}
	which is \eqref{y_p_H01}. Once this bound is known, $y_\theta\partial_x y_\theta\in L^2(0,\theta;L^2(0,1))$, and parabolic regularity combined with periodicity yields $y_\theta\in C([0,\theta];H_0^1(0,1))$.
	
	\smallskip
	\noindent
	(ii) By \eqref{lemma1.3.}, $\|y_\theta\|_{C([0,\theta];L^2(0,1))}\leq C_2R_2$ with $\sqrt2 C_2R_2\leq\frac12$. Arguing as in the proof of Lemma~\ref{regular_lambda}(i) (time reversal $\rho(\tau):=\lambda_\theta(\theta-\tau)$, energy identity, coercivity estimate \eqref{ineq:corecive} with $u=y_\theta$ and $\varepsilon=1$, Poincar\'e inequality), and keeping this time half of the gradient term, we obtain
	\begin{equation}\label{rho_periodic}
		\frac{\d}{\d \tau}\norm{\rho}_{L^2(0,1)}^2+\kappa\norm{\partial_x\rho}^2_{L^2(0,1)}+C_{R_2}\norm{\rho}_{L^2(0,1)}^2\leq \frac{1}{C_{R_2}}\norm{(y_\theta-y_d)(\theta-\tau)}_{L^2(0,1)}^2 ,
	\end{equation}
	with $\kappa:=1-\frac{3\sqrt2}{4}C_2R_2$ and $C_{R_2}:=\frac{\pi^2\kappa}{2}-\frac{\sqrt2 C_2R_2}{4}$; since $\sqrt2 C_2R_2\leq\frac12$, one has $\kappa\geq\frac58$ and $C_{R_2}\geq\frac{5\pi^2}{16}-\frac18>1$.
	The linear backward equation being considered on the space of $\theta$-periodic functions, \eqref{rho_periodic} (with zero right-hand side for the difference of two solutions) shows that the affine map $\lambda_\theta(\theta)\mapsto\lambda_\theta(0)$ is a contraction of $L^2(0,1)$ with constant $e^{-C_{R_2}\theta/2}<1$; hence the periodic solution $\lambda_\theta$ exists and is unique. Gr\"onwall's inequality applied to \eqref{rho_periodic} over one period, combined with the periodicity $\rho(\theta)=\rho(0)$, gives
$(1-e^{-C_{R_2}\theta})\|\lambda_\theta(\theta)\|^2_{L^2(0,1)}\leq \frac{1}{C_{R_2}}\|y_\theta-y_d\|^2_{L^2(0,\theta;L^2(0,1))}$,
then, for every $\tau\in[0,\theta]$, $\norm{\rho(\tau)}^2_{L^2}\leq\norm{\rho(0)}^2_{L^2}+\frac{1}{C_{R_2}}\|y_\theta-y_d\|^2_{L^2L^2}$, while integrating \eqref{rho_periodic} over $(0,\theta)$ gives $\kappa\int_0^\theta\norm{\partial_x\lambda_\theta}^2_{L^2}\leq \frac{1}{C_{R_2}}\|y_\theta-y_d\|^2_{L^2L^2}$. Combining these three estimates yields \eqref{lemma1.3} with an explicit constant $C_3(\theta,R_2)$.
\end{proof}

We also claim that the periodic state is unique on the small control ball used below. Indeed, if $y_1$ and $y_2$ are two periodic states associated with the same control $u_\theta$ and $z:=y_1-y_2$, then
$\partial_tz-\partial_{xx}z+\frac12\partial_x((y_1+y_2)z)=0$.
If $\|u_\theta\|_{L^2(0,\theta;L^2(\omega))}\leq R_2$ and $\sqrt2 C_2R_2\leq\frac12$, Lemma~\ref{Energy}(i) gives $\|y_j\|_{C([0,\theta];L^2)}\leq C_2R_2$ for $j=1,2$. Multiplication by $z$, followed by \eqref{ineq:corecive} and integration over one period, then yields $z=0$. Thus the periodic control-to-state map, and hence the reduced functional $J_\theta$, is single-valued on this ball.

\begin{remark}\label{rem:theta_constants}
	The constant $C_2(\theta)$ is decreasing and the admissible threshold for $R_2$, namely
	$\frac{1}{2\sqrt2 C_2(\theta)}=\frac{1}{2\sqrt2+2\sqrt{4+2/\theta}}$, is increasing with respect to $\theta$; both degenerate as $\theta\to0^+$. As $\theta\to+\infty$, $C_2(\theta)\to1+\sqrt2$ and this particular threshold stays bounded away from zero. Other constants, notably those furnished by the periodic Riccati theory, need not be uniform in $\theta$, and we make no claim of a $\theta$-uniform decay rate $\mu$.
\end{remark}

\subsection{Uniform bounds for optimal solutions}\label{sec:uniform_bound_optima}
We give a uniform (with respect to $T$) small-data bound for the optimal solutions $\left(y_{T,*},u_{T,*}\right)$ of $(OCP)_T$ in the next lemma. This is the only regime in which the result is used below. Its proof, given in Appendix~\ref{app:A}, relies only on local exact controllability to sufficiently regular trajectories.
\begin{lemma}\label{uniform_bound_optima}
	There exist constants $r_{\mathrm{ub}}>0$ and $K_{\mathrm{ub}}>0$, depending only on $\omega$, such that the following holds. In this lemma, take $\bar T=4$ in \eqref{def:slices}, $K_{\mathrm{sl}}=K_{\mathrm{sl}}(4)$ in \eqref{def:K}, and $K_1=K_1(4)$ in Lemma~\ref{regular_lambda}. For all $T>0$, $y_0\in L^2(0,1)$ and $\theta$-periodic $y_d\in C([0,\infty);L^2(0,1))$ satisfying
	\begin{equation}\label{ub_small_data}
		\|y_0\|_{L^2(0,1)}+\normC{y_d}\leq r_{\mathrm{ub}},
	\end{equation}
	every optimal triple $\left(y_{T,*},u_{T,*},\lambda_{T,*}\right)$ of $(OCP)_T$ satisfies
	\begin{equation}\label{uniform_bound_optima1}
		\max_{1\leq i\leq N_T+1}\ \int_{I_i}\left(\|u_{T,*}(t)\|^2_{L^2(\omega)}+\|y_{T,*}(t)\|^2_{L^2(0,1)}\right)\d t \leq K_{\mathrm{ub}}\left(\|y_0\|^2_{L^2(0,1)}+\normC{y_d}^2\right),
	\end{equation}
	where $N_T=N_T(4)$ and $I_i=I_i(4)$ are defined by \eqref{def:slices}. As a consequence:
	\begin{itemize}
		\item[(i)] setting $K_{\mathrm{ub},1}:=K_{\mathrm{sl}} (1+\sqrt{K_{\mathrm{ub}}})$, we have
		\begin{equation*}\label{uniform_bound_y^T}
			\|y_{T,*}\|_{C([0,T];L^2(0,1))}\leq K_{\mathrm{ub},1}\left(\|y_0\|_{L^2(0,1)}+\normC{y_d}\right);
		\end{equation*}
		\item[(ii)] setting $L_1:=\frac{1}{4\sqrt{2}K_{\mathrm{sl}}}$ and $\tilde K_1:=\frac{1}{2\sqrt{K_{\mathrm{ub}}}}$, for every $\tilde R\in(0,L_1)$, if moreover
		\begin{equation}\label{eq_3.19}
			\|y_0\|_{L^2(0,1)}\leq\tilde R\qquad\text{and}\qquad \|y_0\|_{L^2(0,1)}+\normC{y_d}\leq\tilde K_1\,\tilde R,
		\end{equation}
		then, setting $K_{1,2}:=K_1 (1+\sqrt{K_{\mathrm{ub}}})$,
		\begin{equation}\label{uniform_bound_u^T}
			\|u_{T,*}\|_{L^\infty(0,T;L^2(\omega))}\leq K_{1,2}\left(\|y_0\|_{L^2(0,1)}+\normC{y_d}\right).
		\end{equation}
	\end{itemize}
\end{lemma}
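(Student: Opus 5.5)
The plan is to prove the slice bound \eqref{uniform_bound_optima1} by a comparison argument on each slice $I_i$, using optimality of $(y_{T,*},u_{T,*})$ against competitors built by local exact controllability. Fix a slice $I_i=((i-1)\bar T,i\bar T)$ with $\bar T=4$, and let $z:=y_{T,*}((i-1)\bar T)$. Build a competitor that coincides with the optimal pair on $(0,(i-1)\bar T)$. On $((i-1)\bar T,(i-1)\bar T+1)$ it uses a control steering the Burgers state from $z$ to $0$. On $((i-1)\bar T+1,(i+1)\bar T-1)$ it uses the null control, so the state stays $0$. On $((i+1)\bar T-1,(i+1)\bar T)$ it uses a control steering $0$ back to $y_{T,*}((i+1)\bar T)$. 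After that it coincides with the optimal pair again. The boundary slices, including the last short slice, are handled analogously: near $T$ there is no terminal constraint, so no return is needed and one simply uses the null control. Optimality then gives
\[
\int_{(i-1)\bar T}^{(i+1)\bar T}\big(\|y_{T,*}-y_d\|^2+\|u_{T,*}\|^2\big)\,\d t\le C\big(\|z\|^2+\|y_{T,*}((i+1)\bar T)\|^2\big)+C\normC{y_d}^2 .
\]
Here the $y_d$ term comes from tracking $y_d$ while the competitor is at $0$. The control costs are bounded using local null controllability of Burgers (Fursikov–Imanuvilov type), with cost $\le C\|\text{datum}\|$ on time $1$ provided the datum is small in $L^2$. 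By the parabolic smoothing of Lemma~\ref{energyT}, one may also first wait with zero control to gain regularity.

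The intermediate states must be small, and to close this I would use a bootstrap/continuity argument on the index $i$. Let $M_i:=\sup_{t\in \overline{I_i}}\|y_{T,*}(t)\|$. First I would show by induction that $M_i\le \delta$ for all $i$, where $\delta$ is the controllability radius. This needs a starting bound at $t=0$ and an endpoint estimate at $(i+1)\bar T$. To avoid needing $y_{T,*}((i+1)\bar T)$ small a priori, I would instead use a cheaper competitor that switches to the null control after $(i-1)\bar T+1$ for the rest of the horizon. This is legitimate since uncontrolled Burgers decays like $e^{-\pi^2 t}$. However, its cost is $\tfrac12\int\|y_d\|^2$ over the remaining horizon, which is not uniform in $T$. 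So instead the competitor should rejoin the optimal trajectory at a time $s\in I_{i+1}$ chosen by the mean value theorem so that $\|y_{T,*}(s)\|^2\le \frac1{\bar T}\int_{I_{i+1}}\|y_{T,*}\|^2$. This produces a discrete recursion of the form $a_i\le C(a_{i-1}+a_{i+1})+C\normC{y_d}^2$, where $a_i:=\int_{I_i}(\|u\|^2+\|y\|^2)$.

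The main obstacle is closing this recursion uniformly in $T$. I would instead compare the tail functional: optimality implies that $(y_{T,*},u_{T,*})$ restricted to $(t_0,T)$ is optimal for its own initial datum $y_{T,*}(t_0)$. Write $V(t_0)$ for this tail cost. Steering to $0$ in unit time and then tracking the zero state with zero control gives $V(t_0)\le C\|y_{T,*}(t_0)\|^2+\tfrac12\int_{t_0}^{T}\|y_d\|^2$, again not uniform. The fix is to compare slice-local costs via the average-time choice above. Taking $\bar T=4$ large relative to the steering time $1$ should make the recursion contractive, in the sense $a_i\le \tfrac12\max(a_{i-1},a_{i+1})+C(\ldots)$. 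A discrete maximum principle over $i\in\{1,\dots,N_T+1\}$ then yields $\max_i a_i\le K_{\mathrm{ub}}(\|y_0\|^2+\normC{y_d}^2)$, with smallness preserved at every step by Lemma~\ref{lem_y_upper_bound_by_sub_inter}, provided $r_{\mathrm{ub}}$ is small.

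Consequences (i) and (ii) are then routine. Consequence (i) follows from Lemma~\ref{lem_y_upper_bound_by_sub_inter} with $S\le\sqrt{K_{\mathrm{ub}}}(\|y_0\|+\normC{y_d})$. For (ii), under \eqref{eq_3.19} the slice norms of $\lambda_{T,*}$ (which equal those of $u_{T,*}$ on $\omega$) are at most $\sqrt{K_{\mathrm{ub}}}\tilde K_1\tilde R\le\tilde R/2<L_1$. Hence Lemma~\ref{regular_lambda}(ii) applies, and $\|u_{T,*}\|_{L^\infty L^2}\le\|\lambda_{T,*}\|_{C L^2}$ gives \eqref{uniform_bound_u^T} with $K_{1,2}=K_1(1+\sqrt{K_{\mathrm{ub}}})$.
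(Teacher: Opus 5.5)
Your derivation of (i) and (ii) from the slice bound is correct and matches the paper. The gap is in the slice bound \eqref{uniform_bound_optima1} itself.

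\textbf{The gap.} Every competitor in your scheme must rejoin the optimal trajectory inside $I_{i+1}$, and this needs the controllability result to apply there. Steering ``$0$ back to $y_{T,*}((i+1)\bar T)$'' only makes sense as local exact controllability to the trajectory $y_{T,*}$. That requires $\|z(s)-y_{T,*}(s)\|_{H_0^1}\le\varepsilon_c$ and a bound on $y_{T,*}$ in $L^2(s,s+1;H^2)\cap H^1(s,s+1;L^2)$. In other words, $a_{i+1}$ must already be small in absolute terms.

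Nothing gives you this a priori. The only available bound, $J_T(u_{T,*})\le J_T(0)\le\frac{1}{2\pi^2}\|y_0\|^2+T\normC{y_d}^2$, degenerates as $T\to\infty$, so a single slice may a priori carry cost of order $T\normC{y_d}^2$. Your induction on $i$ is circular, since step $i$ uses slice $i+1$. Appealing to Lemma~\ref{lem_y_upper_bound_by_sub_inter} to ``preserve smallness'' is also circular: that lemma needs exactly the slice bounds on $u_{T,*}$ you are proving.

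So the recursion $a_i\le C(a_{i-1}+a_{i+1})+C\normC{y_d}^2$ holds only on slices whose neighbours are already small. Your discrete maximum principle is applied precisely where the recursion may fail. Separately, the contraction factor $\frac12$ for $\bar T=4$ is asserted rather than shown; the constant contains the controllability cost $C_c$, which is not small. Longer windows could repair that point, but not the smallness issue.

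\textbf{The missing idea.} You need a threshold dichotomy that never requires smallness on the large slices. Set $D:=\|y_0\|^2+\normC{y_d}^2$ and call a full slice good if $\int_{I_i}f\le M_bD$, bad otherwise.
\begin{itemize}
\item On a good slice, smallness holds by definition (since $D\le D_*$). You can pick a time with $\|y_{T,*}\|_{H_0^1}^2\lesssim D$ and a unit window with small control, and apply controllability to the trajectory there.
\item A maximal cluster of bad slices is crossed with the \emph{null} control, starting from a good time $t_1$ just before it. The free Burgers flow is globally dissipative and smoothing: $\int\|z\|^2\le\frac{1}{2\pi^2}\|\xi\|^2$, and $\|z(t)\|_{H_0^1}\le\|\xi\|$ after time $\frac12$ when $\|\xi\|\le\pi/2$. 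So no smallness is needed inside the cluster, and one rejoins in the good slice that follows it.
\item This competitor costs $O(D)$ plus at most $6\normC{y_d}^2\le 6D$ per unit time. Each bad slice costs more than $M_bD=48D$, i.e.\ more than $12D$ per unit time. Comparing the two bounds both the length of the cluster and its total cost by $O(D)$.
\item A cluster reaching $T$ is handled the same way with the free-endpoint competitor (null control up to $T$).
\end{itemize}
It is this cost-per-unit-length comparison, not a contractive recursion, that yields a bound independent of $T$.
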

\begin{proof}
	The slice bound \eqref{uniform_bound_optima1} is exactly Lemma~\ref{prop4.1} in Appendix~\ref{app:A}. Since $u_{T,*}=\chi_\omega\lambda_{T,*}$, it implies
	\begin{equation}\label{eq_3.20bis}
		\sup_{1\leq i\leq N_T+1}\|\lambda_{T,*}\|_{L^2(I_i;L^2(\omega))}\leq \sqrt{K_{\mathrm{ub}}}\left(\|y_0\|_{L^2(0,1)}+\normC{y_d}\right).
	\end{equation}
	Then (i) follows from Lemma~\ref{lem_y_upper_bound_by_sub_inter} (with $\bar T=4$) and \eqref{eq_3.20bis}. As for (ii): under \eqref{eq_3.19}, the estimate \eqref{eq_3.20bis} gives $\sup_i\|\lambda_{T,*}\|_{L^2(I_i;L^2(\omega))}\leq\sqrt{K_{\mathrm{ub}}}\,\tilde K_1\tilde R=\tilde R/2<\tilde R$, so that the assumptions \eqref{slice-small} of Lemma~\ref{regular_lambda} are satisfied with $R_1:=\tilde R<L_1=\frac{1}{4\sqrt2 K_{\mathrm{sl}}}$. Hence, by \eqref{eq_3.20} and \eqref{eq_3.20bis},
	\begin{multline*}
		\|\lambda_{T,*}\|_{C([0,T];L^2(0,1))}\leq K_1\left(\|y_0\|_{L^2(0,1)}+\sqrt{K_{\mathrm{ub}}}\left(\|y_0\|_{L^2(0,1)}+\normC{y_d}\right)+\normC{y_d}\right)\\
		\leq K_{1,2}\left(\|y_0\|_{L^2(0,1)}+\normC{y_d}\right),
	\end{multline*}
	and \eqref{uniform_bound_u^T} follows from $u_{T,*}=\chi_\omega\lambda_{T,*}$.
\end{proof}

\subsection{\texorpdfstring{Strict convexity and local uniqueness for $(OCP)_T$ and $(Per)_\theta$}{Strict convexity and local uniqueness}}\label{sec:local_uniqueness}
Denote by 
$$
J_T(u(\cdot)):=\frac{1}{2}\int_0^T \big(\| y(t)-y_d(t) \|_{L^2(0,1)}^2+ \|u(t)\|_{L^2(\omega)}^2\big)\d t 
$$
the reduced functional for $ (OCP)_T $, where $(y(\cdot),u(\cdot))\in L^2(0,T;L^2(0,1))\times L^2(0,T;L^2(\omega))$ satisfies $\eqref{state}$, and by 
$$ 
J_\theta(u(\cdot)):=\frac{1}{2}\int_0^\theta \big(\| y(t)-y_d(t) \|_{L^2(0,1)}^2+ \|u(t)\|_{L^2(\omega)}^2\big)\d t 
$$ 
the reduced functional for $ (Per)_\theta $, where $(y(\cdot),u(\cdot))\in L^2(0,\theta;L^2(0,1)) \times L^2(0,\theta;L^2(\omega))$ is the solution of $ \eqref{per_state} $.

For any control $u_T\in L^\infty(0,T;L^2(\omega))$, let $y_T$ be the corresponding state solving \eqref{state}, and let $\lambda_T$ solve the adjoint equation in \eqref{maxim} with this state $y_T$, terminal condition $\lambda_T(T)=0$, but without imposing the relation \eqref{contrT}. Then the second Gateaux derivative\footnote{Recall that the first  Gateaux derivative of $J_T$ at $u_T(\cdot)$ in the direction $v(\cdot)$ is  $J_T^{\prime}\left(u_T(\cdot)\right)[v(\cdot)]:=\lim\limits_{\alpha\to 0}\frac{J_T\left(u_T(\cdot)+\alpha v(\cdot)\right) - J_T\left(u_T(\cdot)\right)}{\alpha}$, and that the second Gateaux derivative of $ J_T $ at $ u_T(\cdot) $ in the direction $ \left(v_1(\cdot),v_2(\cdot)\right) $ is $J_T^{\prime\prime}(u_T )[v_1 ,v_2 ]:= \lim\limits_{\alpha\to 0}\frac{J_T^{\prime}\left(u_T(\cdot)+\alpha v_2(\cdot)\right)[v_1(\cdot)]- J_T^{\prime}\left(u_T(\cdot)\right)[v_1(\cdot)]}{\alpha}$. } of $ J_T $ at $ u_T(\cdot) $ in the direction $ (v_T(\cdot),v_T(\cdot)) $, denoted by $ J_T^{\prime\prime}(u_T )[v_T ,v_T ]$, is (see \cite[Theorem 5.2]{CT})
\begin{equation}\label{eq_J_T_sec_der}
	J_T^{\prime\prime}(u_T )[v_T ,v_T ] =  - \int_{0}^{T}\langle z_T(t)\partial_x \lambda_T(t), z_T(t)\rangle\d t +  \int_{0}^{T} \|z_T(t)\|_{L^2(0,1)}^2\d t +  \int_{0}^{T}\|v_T(t)\|_{L^2(\omega)}^2\d t,
\end{equation}
where $ \left(z_T(\cdot), v_T(\cdot)\right) $ satisfies 
\begin{equation}\label{linearizedT}
	\left\{
	\begin{aligned}
		&\partial_t z_T  - \partial_{xx} z_T + \partial_x \left(y_T z_T\right)  = \chi_\omega v_T  , \\
		&z_T (t,0)=z_T (t,1)=0,\\
		&z_T (0,x) = 0,
	\end{aligned}
	\right.
\end{equation}
for $(t,x)\in (0,T)\times (0,1)$.
Similarly, let $u_\theta\in L^2(0,\theta;L^2(\omega))$ satisfy the smallness condition of Lemma~\ref{Energy}(ii), let $y_\theta$ be its unique periodic state solving \eqref{per_state}, and let $\lambda_\theta$ be the periodic adjoint state given by that lemma, without imposing \eqref{contrP}. Then the second Gateaux derivative of $ J_\theta $ at $ u_\theta(\cdot) $ in the direction $ (v_\theta(\cdot),v_\theta(\cdot)) $ is
\begin{equation}\label{eq_J_theta_sec_der}
	J_{\theta}^{\prime\prime}(u_\theta)[v_\theta,v_\theta] =  - \int_{0}^{\theta}\langle z_\theta(t)\partial_x \lambda_\theta(t), z_\theta(t)\rangle\d t +  \int_{0}^{\theta} \|z_\theta(t)\|_{L^2(0,1)}^2\d t +  \int_{0}^{\theta}\|v_\theta(t)\|_{L^2(\omega)}^2\d t,
\end{equation}
where $ \left(z_\theta(\cdot), v_\theta(\cdot)\right) $ satisfies  
\begin{equation}\label{linearizedP}
	\left\{
	\begin{aligned}
		&\partial_t z_\theta  - \partial_{xx} z_\theta  + \partial_x \left(y_\theta z_\theta\right)  = \chi_\omega v_\theta  , \\
		&z_\theta (t,0)=z_\theta (t,1)=0, \\
		&z_\theta (0,x) = z_\theta (\theta,x),
	\end{aligned}
	\right.
\end{equation}
for $(t,x)\in (0,\theta)\times (0,1)$.

The key convexity property is the following. Compared with a statement tailored to optimal controls only, it is stated on balls with a \emph{free radius} $r$; this flexibility is used in Appendix~\ref{app:C}, where we need the strict convexity on a ball containing both the minimizers of $(OCP)_T$ and the solution of the optimality system constructed there by a fixed point argument. The proof is given in Appendix~\ref{app:B}.
\begin{lemma}\phantomsection\label{lem:convexity}
	\begin{itemize}
		\item[(i)] There exists $r_0\in(0,1]$ such that, for all $T>0$, $r\in(0,r_0]$, and all $y_0\in L^2(0,1)$, $\theta$-periodic $y_d\in C([0,\infty);L^2(0,1))$ with $\|y_0\|_{L^2(0,1)}\leq r_0$ and $\normC{y_d}\leq r_0$, one has
		\begin{equation}\label{convexT}
			J_T^{\prime\prime}(u)[v,v] \geq \frac{1}{2}\int_0^T\|v(t)\|^2_{L^2(\omega)}\d t
			\qquad\forall v\in L^2(0,T;L^2(\omega)),
		\end{equation}
		for every $u$ in the ball $B_r:=\left\{u\in L^\infty(0,T;L^2(\omega))\,\mid\, \|u\|_{L^\infty(0,T;L^2(\omega))}\leq r\right\}$. In particular $J_T$ is strictly convex on the convex set $B_r$.
		\item[(ii)] For every $\theta>0$ there exists $r_\theta\in(0,1]$ such that, if $\normC{y_d}\leq r_\theta$, then
		\begin{equation}\label{convexP}
			J_\theta^{\prime\prime}(u)[v,v] \geq \frac{1}{2}\int_0^\theta\|v(t)\|^2_{L^2(\omega)}\d t
			\qquad\forall v\in L^2(0,\theta;L^2(\omega)),
		\end{equation}
		for every $u$ in the ball $B^{per}_{r_\theta}:=\{u\in L^2(0,\theta;L^2(\omega))\,\mid\, \|u\|_{L^2(0,\theta;L^2(\omega))}\leq r_\theta \}$. In particular $J_\theta$ is strictly convex on the convex set $B^{per}_{r_\theta}$.
	\end{itemize}
\end{lemma}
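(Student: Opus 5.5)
The second variation formula \eqref{eq_J_T_sec_der} reduces \eqref{convexT} to a single estimate: the indefinite term $\int_0^T\langle z\,\partial_x\lambda, z\rangle\,\d t$ must be absorbed by half of $\int_0^T\|v\|^2_{L^2(\omega)}\,\d t$, plus the nonnegative term $\int_0^T\|z\|^2$. I would do this in three steps.

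\emph{Bounds on the state and adjoint.} Fix $u\in B_r$ and let $y$, $\lambda$ be the state and the adjoint, the adjoint taken with $\lambda(T)=0$ and without the relation \eqref{contrT}.
\begin{itemize}
\item Since $\|u\|_{L^2(I_i;L^2(\omega))}\le \sqrt{\bar T}\,r$ on slices of length $\bar T=1$, Lemma~\ref{lem_y_upper_bound_by_sub_inter} gives $\|y\|_{C([0,T];L^2)}\le K_{\mathrm{sl}}(1)(r_0+r)$.
\item The proof of Lemma~\ref{regular_lambda}(i) used only the smallness of $\|y\|_{C([0,T];L^2)}$, not \eqref{contrT}. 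It therefore applies verbatim once $r_0$ is small enough that $2\sqrt2K_{\mathrm{sl}}(1)\cdot 2r_0<1$.
\item This yields $\|\lambda\|_{C([0,T];L^2)}\le C(\|y\|_{C L^2}+\normC{y_d})\le Cr_0$.
\item Keeping the gradient term in the energy identity gives, on each unit slice, $\int_{I_i}\|\partial_x\lambda\|^2\le C r_0^2$, uniformly in $i$ and $T$.
\end{itemize}

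\emph{Bounds on the linearized solution $z$ of \eqref{linearizedT}.} Multiply by $z$. The convection term $\int \partial_x(yz)z=-\int yz\partial_xz$ is controlled by the coercivity estimate \eqref{ineq:corecive}, exactly as for $\rho$, since $\|y\|_{CL^2}$ is small. This gives
\[
\frac{\d}{\d t}\|z\|^2+\|\partial_x z\|^2+c\|z\|^2\le C\|v\|^2_{L^2(\omega)}.
\]
Integrating over $(0,T)$ with $z(0)=0$ gives $\int_0^T\|\partial_xz\|^2\le C\int_0^T\|v\|^2$. Gr\"onwall gives $\|z(t)\|^2\le C\int_0^te^{-c(t-s)}\|v(s)\|^2\,\d s$.

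\emph{Absorbing the bad term.} Integrate by parts, $\int z^2\partial_x\lambda=-2\int \lambda z\partial_xz$, and use Agmon:
\[
\Big|\int_0^1 z^2\partial_x\lambda\Big|\le 2\|\lambda\|_{L^2}\|z\|_{L^\infty}\|\partial_xz\|_{L^2}\le 2\sqrt2\|\lambda\|_{L^2}\|z\|^{1/2}\|\partial_x z\|^{3/2}\le \frac{2\sqrt2}{\sqrt\pi}\|\lambda\|_{L^2}\|\partial_xz\|^2 .
\]
Hence $\big|\int_0^T\langle z\partial_x\lambda,z\rangle\big|\le C\|\lambda\|_{C L^2}\int_0^T\|\partial_xz\|^2\le C' r_0\int_0^T\|v\|^2$. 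This avoids the slicewise $H^1$ bound on $\lambda$ altogether. Choosing $r_0$ with $C'r_0\le\frac12$ gives \eqref{convexT}, with constants depending only on $\omega$ (in fact not even on $\omega$), and never on $T$. Strict convexity on the convex set $B_r$ then follows by integrating the second derivative along segments. The main obstacle I expect is making sure every constant is $T$-independent, which the exponentially weighted Gr\"onwall bounds above guarantee.

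For (ii) I would repeat the argument on one period. For $u\in B^{per}_{r_\theta}$:
\begin{itemize}
\item Lemma~\ref{Energy}(i) gives $\|y_\theta\|_{C L^2}\le C_2 r_\theta$.
\item Lemma~\ref{Energy}(ii) gives $\|\lambda_\theta\|_{C L^2}\le C_3(\|y_\theta\|_{L^2L^2}+\sqrt\theta\normC{y_d})\le C(\theta)r_\theta$.
\item For the periodic $z_\theta$ in \eqref{linearizedP}, the same energy inequality integrated over one period with $\|z(\theta)\|=\|z(0)\|$ gives $\int_0^\theta\|\partial_xz\|^2\le C\int_0^\theta\|v\|^2$.
\end{itemize}
Existence and uniqueness of $z_\theta$ follow from the resulting contraction of the period map, as in Lemma~\ref{Energy}(ii). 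The same Agmon absorption then yields \eqref{convexP} once $r_\theta$ is small, now depending on $\theta$.
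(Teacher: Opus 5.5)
Your proposal is correct and follows the paper's proof in Appendix~\ref{app:B} essentially step by step. The shared steps are: the slice bound on $y$ with $\bar T=1$; the reuse of Lemma~\ref{regular_lambda}, whose adjoint estimates only need smallness of $\|y\|_{C([0,T];L^2)}$ and not \eqref{contrT}; the energy bound $\|z\|_{L^2(0,T;H_0^1)}\leq C\|v\|_{L^2(0,T;L^2(\omega))}$; and the absorption $\big|\int_0^T\langle z\,\partial_x\lambda,z\rangle\,\d t\big|\leq 2\sqrt{2/\pi}\,\|\lambda\|_{L^\infty(0,T;L^2)}\|z\|^2_{L^2(0,T;H_0^1)}$ via integration by parts, Agmon and Poincar\'e.

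In (ii) you reuse this pairing with $\|\lambda_\theta\|_{C([0,\theta];L^2)}$, whereas the paper pairs $\|\lambda_\theta\|_{L^2(0,\theta;H_0^1)}$ with $\sup_t\|z_\theta(t)\|_{L^2}\,\|z_\theta\|_{L^2(0,\theta;H_0^1)}$. Both work, provided you apply Lemma~\ref{Energy}(ii) with a fixed preliminary radius $R_2$, so that $C_3$ does not change when you then shrink $r_\theta$.
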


\begin{proposition}\label{local_uniqueness}
	Let $T>0$, $y_0\in L^2(0,1)$ and $\theta$-periodic $y_d\in C([0, \infty);L^2 (0,1))$.
	There exists $ \varepsilon>0 $ (independent of $T$) such that, if $\max(\|y_0\|_{L^2(0,1)}, \normC{y_d}) \leq \varepsilon$, then the problem $ (OCP)_T $ has a unique optimal triple $ \left(y_{T,*}(\cdot),u_{T,*}(\cdot),\lambda_{T,*}(\cdot)\right) $.
	Similarly, if $ \normC{y_d} \leq \varepsilon$, then the problem $ (Per)_\theta $ has a unique optimal triple $ \left(y_{\theta,*}(\cdot),u_{\theta,*}(\cdot),\lambda_{\theta,*}(\cdot)\right) $.
\end{proposition}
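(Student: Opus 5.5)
The plan is to combine the a priori bounds of Lemma~\ref{uniform_bound_optima} (for $(OCP)_T$) and of \eqref{uthetasmall} (for $(Per)_\theta$) with the strict convexity of Lemma~\ref{lem:convexity}. The bounds place every minimizer inside a ball on which the reduced functional is strictly convex. Uniqueness of the optimal control then follows, and with it uniqueness of the state and adjoint.

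\emph{Case of $(OCP)_T$.} Fix $r_0$ from Lemma~\ref{lem:convexity}(i) and $L_1,\tilde K_1,K_{1,2}$ from Lemma~\ref{uniform_bound_optima}.
\begin{enumerate}
\item Choose $\varepsilon>0$ independent of $T$ so that $\varepsilon\leq r_0$ and $2\varepsilon\leq r_{\mathrm{ub}}$. Also choose a radius $\tilde R\in(0,L_1)$ with $\varepsilon\leq\tilde R$ and $2\varepsilon\leq\tilde K_1\tilde R$, and require $2K_{1,2}\varepsilon\leq r_0$.
\item Under these choices, \eqref{ub_small_data} and \eqref{eq_3.19} hold. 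Then \eqref{uniform_bound_u^T} gives $\|u_{T,*}\|_{L^\infty(0,T;L^2(\omega))}\leq 2K_{1,2}\varepsilon=:r\leq r_0$ for every optimal control, so all minimizers lie in $B_r$.
\item Suppose $u^1,u^2$ are two minimizers. Since $B_r$ is convex, the segment $u^s=(1-s)u^1+su^2$ stays in $B_r$. Set $\phi(s):=J_T(u^s)$. By \eqref{convexT}, $\phi''(s)\geq\frac12\|u^2-u^1\|^2_{L^2(0,T;L^2(\omega))}$.
\item Because $\phi(0)=\phi(1)=\min J_T\leq\phi(s)$, strict convexity forces $u^1=u^2$.
\end{enumerate}
The state is then unique by well-posedness of \eqref{state}. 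The adjoint is unique because $\lambda_{T,*}$ solves a linear backward equation, or more simply because $u=\chi_\omega\lambda$ together with the linear equation determines it.

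One technical point arises in step 3: $J_T$ must be twice Gateaux differentiable along the segment, with $\phi''(s)=J_T''(u^s)[v,v]$, where $v=u^2-u^1\in L^\infty(0,T;L^2(\omega))$. This is the setting of \eqref{eq_J_T_sec_der}, which is valid for controls in $L^\infty(0,T;L^2(\omega))$, so the identity applies.

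\emph{Case of $(Per)_\theta$.} The argument is the same with $r_\theta$ from Lemma~\ref{lem:convexity}(ii). By \eqref{uthetasmall}, any optimal control satisfies $\|u_{\theta,*}\|_{L^2(0,\theta;L^2(\omega))}\leq\sqrt\theta\,\normC{y_d}$. We therefore require $\varepsilon\leq r_\theta$ and $\sqrt\theta\,\varepsilon\leq\min(r_\theta,R_2)$, with $R_2$ as in Lemma~\ref{Energy}(ii).
\begin{enumerate}
\item On this ball, the periodic state is unique by the remark following Lemma~\ref{Energy}, so $J_\theta$ is single-valued.
\item The periodic adjoint is unique by Lemma~\ref{Energy}(ii).
\item The convexity argument above then gives uniqueness of $u_{\theta,*}$, and hence of the whole triple.
\end{enumerate}
Here $\varepsilon$ may depend on $\theta$, which is fixed throughout; only independence of $T$ is claimed.

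The main obstacle is verifying that the minimizers actually lie in the convexity ball, i.e.\ checking that the constants of Lemma~\ref{uniform_bound_optima} can be chosen compatibly with $r_0$. All the hard work, the uniform slice bound, is delegated to that lemma. Beyond this, the proof is a bookkeeping of smallness thresholds.
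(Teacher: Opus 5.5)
Your proposal is correct and follows essentially the same route as the paper. The paper takes $\varepsilon$ as the minimum of the same thresholds (with $\tilde R=L_1/2$), uses \eqref{uniform_bound_u^T} and \eqref{uthetasmall} to place every minimizer in the convexity balls of Lemma~\ref{lem:convexity}, and concludes with a midpoint strict-convexity argument, which is equivalent to your second-derivative argument along the segment.
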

\begin{proof}
	We take
	\begin{equation}\label{threshold1}
		\varepsilon := \min\Big( r_\theta, \frac{r_\theta}{\sqrt{\theta}}, \frac{r_{\mathrm{ub}}}{2}, \tilde R, \tfrac12\tilde K_1\tilde R, r_0, \frac{r_0}{2K_{1,2}}\Big) ,
	\end{equation}
	where $r_\theta$ and $r_0$ are given by Lemma~\ref{lem:convexity}, $\tilde R:=\frac{L_1}{2}$, and $r_{\mathrm{ub}}$, $L_1$, $\tilde K_1$, $K_{1,2}$ are the constants of Lemma~\ref{uniform_bound_optima}.
	
	We first consider $(Per)_\theta$. By \eqref{uthetasmall}, every optimal control satisfies $\|u_{\theta,*}\|_{L^2(0,\theta;L^2(\omega))}\leq\sqrt\theta\,\normC{y_d}\leq r_\theta$, hence lies in the convex set $B^{per}_{r_\theta}$, on which $J_\theta$ is strictly convex by Lemma~\ref{lem:convexity}(ii). If $u_1\neq u_2$ were two optimal controls, then $\frac{u_1+u_2}{2}\in B^{per}_{r_\theta}$ and, by strict convexity, $J_\theta\big(\frac{u_1+u_2}{2}\big)<\frac{1}{2} J_\theta(u_1)+\frac{1}{2} J_\theta(u_2)=\inf J_\theta$, a contradiction. Hence the optimal control is unique; the optimal state and (by Lemma~\ref{Energy}(ii)) the periodic adjoint state are then unique as well.
	
	We now consider $(OCP)_T$. Since $2\varepsilon\leq r_{\mathrm{ub}}$, the small-data assumption \eqref{ub_small_data} holds. Moreover, $\tilde R=\frac{L_1}{2}<L_1$ and $\varepsilon\leq\min\{\tilde R,\frac{1}{2}\tilde K_1\tilde R\}$, so the assumptions \eqref{eq_3.19} hold. Therefore, by \eqref{uniform_bound_u^T}, every optimal control of $(OCP)_T$ satisfies $\|u_{T,*}\|_{L^\infty(0,T;L^2(\omega))}\leq K_{1,2}\left(\|y_0\|_{L^2(0,1)}+\normC{y_d}\right)\leq 2K_{1,2}\,\varepsilon\leq r_0$; hence all optimal controls lie in the convex set $B_{r_0}$, on which $J_T$ is strictly convex by Lemma~\ref{lem:convexity}(i). Arguing as above, the optimal control is unique, and so are the optimal state and the adjoint state.
\end{proof}

\section{Global periodic turnpike}\label{sec:periodicturnpike}
\subsection{The local turnpike result}\label{sec:local_turnpike}
We first establish a local turnpike result. Its proof consists in studying the perturbation of \eqref{maxim}--\eqref{contrT} with respect to \eqref{maximp}--\eqref{contrP}, by means of the periodic Riccati theory developed in \cite{TZZ}, of a fixed point argument, and of the convexity Lemma~\ref{lem:convexity}, which serves to identify the solution of the optimality system constructed by the fixed point argument with the (unique) minimizer of $(OCP)_T$. The detailed proof is given in Appendix~\ref{app:C}.
\begin{lemma}\label{lem:periodic_regularity}
	Fix $\alpha\in(3/4,1)$ and set $A_0=-\partial_{xx}$, with domain $D(A_0)=H^2(0,1)\cap H_0^1(0,1)$. There exist $r_\alpha>0$ and $C_\alpha>0$ such that, if $\normC{y_d}\leq r_\alpha$, the optimal periodic triple satisfies
	\begin{equation}\label{periodic_regularity}
		\|y_{\theta,*}\|_{C([0,\theta];H_0^1(0,1))} + \|\lambda_{\theta,*}\|_{C([0,\theta];H_0^1(0,1))} + \|\lambda_{\theta,*}\|_{L^\infty(0,\theta;D(A_0^\alpha))} \leq C_\alpha\normC{y_d}.
	\end{equation}
	Consequently,
	\begin{equation}\label{periodic_lambdax}
		\|\partial_x\lambda_{\theta,*}\|_{L^\infty((0,\theta)\times(0,1))} \leq C_\alpha\normC{y_d}.
	\end{equation}
\end{lemma}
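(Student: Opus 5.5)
Since $\normC{y_d}$ is small (below the threshold of Proposition~\ref{local_uniqueness}), the optimal periodic triple is unique and satisfies $u_{\theta,*}=\chi_\omega\lambda_{\theta,*}$. I will bootstrap regularity from $L^2$ bounds, in four steps.

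\emph{Step 1 ($L^2$-level bounds).} By \eqref{uthetasmall}, $\|u_{\theta,*}\|_{L^2(0,\theta;L^2(\omega))}\leq\sqrt\theta\,\normC{y_d}$. Lemma~\ref{Energy}(i), estimate \eqref{y_p_H01}, then gives
\[
\|y_{\theta,*}\|_{C([0,\theta];H_0^1)}\leq C\normC{y_d},
\]
where the exponential factor is bounded by $e^{8\theta r_\alpha^2/\pi}$. Lemma~\ref{Energy}(ii) gives
\[
\|\lambda_{\theta,*}\|_{C([0,\theta];L^2)}+\|\lambda_{\theta,*}\|_{L^2(0,\theta;H_0^1)}\leq C_3\|y_{\theta,*}-y_d\|_{L^2L^2}\leq C\normC{y_d},
\]
where $\|y_{\theta,*}\|_{L^2L^2}$ is controlled by \eqref{lemma1.3.}.

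\emph{Step 2 ($H_0^1$ bound on $\lambda$).} Reverse time, $\rho(\tau)=\lambda_{\theta,*}(\theta-\tau)$, and multiply the adjoint equation by $-\partial_{xx}\rho$. The transport term is bounded by
\[
\|y\|_{L^\infty}\|\partial_x\rho\|\,\|\partial_{xx}\rho\|\leq \tfrac14\|\partial_{xx}\rho\|^2+C\|y\|_{H_0^1}^2\|\partial_x\rho\|^2 ,
\]
and $\|y\|_{H_0^1}$ is already bounded uniformly by Step 1. The source term is bounded by $\|y-y_d\|\,\|\partial_{xx}\rho\|$. This yields
\[
\frac{d}{d\tau}\|\partial_x\rho\|^2+\|\partial_{xx}\rho\|^2\leq g\|\partial_x\rho\|^2+C\|y-y_d\|^2 ,\qquad g\in L^\infty .
\]
Average the starting point over the period exactly as in the proof of \eqref{y_p_H01}, using $\int_0^\theta\|\partial_x\lambda\|^2\leq C\normC{y_d}^2$ from Step 1. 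Combined with periodicity, this gives $\|\lambda_{\theta,*}\|_{C([0,\theta];H_0^1)}\leq C\normC{y_d}$, and also $\lambda_{\theta,*}\in L^2(0,\theta;D(A_0))$.

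\emph{Step 3 (the $D(A_0^\alpha)$ bound).} This is the main obstacle, because $L^\infty$ in time into $D(A_0^\alpha)$ with $\alpha>3/4$ is not reachable by energy estimates alone. I would use the analytic semigroup $e^{-tA_0}$ and the periodic mild formulation. Write
\[
\rho(\tau)=e^{-\theta A_0}\rho(\tau)+\int_{\tau-\theta}^{\tau}e^{-(\tau-s)A_0}f(s)\,ds,\qquad f:=y\,\partial_x\rho+y_d-y ,
\]
extended periodically. Equivalently, since $\|e^{-\theta A_0}\|<1$,
\[
\rho(\tau)=\int_{-\infty}^{\tau}e^{-(\tau-s)A_0}f(s)\,ds .
\]
Steps 1 and 2 give $\|f\|_{L^\infty(0,\theta;L^2)}\leq\|y\|_{L^\infty}\|\partial_x\rho\|+\|y\|+\|y_d\|\leq C\normC{y_d}$, using Agmon to bound $\|y\|_{L^\infty}$ by the $H_0^1$ norm. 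Then the smoothing estimate $\|A_0^\alpha e^{-tA_0}\|\leq C t^{-\alpha}e^{-\pi^2 t/2}$ is integrable on $(0,\infty)$ because $\alpha<1$. Hence
\[
\sup_\tau\|A_0^\alpha\rho(\tau)\|\leq C_\alpha\|f\|_{L^\infty L^2}\leq C_\alpha\normC{y_d}.
\]

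\emph{Step 4 (consequence).} Since $\alpha>3/4$, $D(A_0^\alpha)\hookrightarrow H^{2\alpha}(0,1)\hookrightarrow W^{1,\infty}(0,1)$ because $2\alpha>3/2$. This gives \eqref{periodic_lambdax}.

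The threshold is $r_\alpha=\min(\varepsilon,r_\theta,\dots)$, and all constants depend on $\theta$ and $\alpha$. The only delicate points are justifying the mild formulation for the weak periodic solution, which is done by uniqueness of the periodic solution of the linear equation (Lemma~\ref{Energy}(ii)), and keeping the Gr\"onwall factors uniform. Both are handled by the smallness of $\normC{y_d}\leq r_\alpha$.
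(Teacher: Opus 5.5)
Your proposal is correct and follows essentially the paper's route. Both arguments obtain the $L^2$-level bounds from \eqref{uthetasmall}, \eqref{y_p_H01} and Lemma~\ref{Energy}(ii), then bootstrap $\lambda_{\theta,*}$ to $C([0,\theta];H_0^1)$. Both then use the periodic variation-of-constants formula with the smoothing bound $\|A_0^\alpha e^{-sA_0}\|\leq C_\alpha s^{-\alpha}$, $\alpha<1$, and conclude with the embedding $H^{2\alpha}\hookrightarrow W^{1,\infty}$.

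The only difference is in the $H_0^1$ bootstrap for $\lambda_{\theta,*}$. You test with $-\partial_{xx}\rho$ and apply a period-averaged Gr\"onwall argument, as in the proof of \eqref{y_p_H01}. The paper instead invokes a Fourier-based periodic maximal-$L^2$ estimate for $\partial_t+A_0$ with source in $L^2(0,\theta;L^2(0,1))$. Either works equally well here.
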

\begin{proof}
Write $\delta:=\normC{y_d}$. By \eqref{uthetasmall} and \eqref{y_p_H01}, after fixing a sufficiently small upper bound for $\delta$, we have
\begin{equation}\label{periodic_yH1}
	\|y_{\theta,*}\|_{C([0,\theta];H_0^1(0,1))}\leq C\delta.
\end{equation}
Lemma~\ref{Energy}(ii) also yields the estimate $\|\lambda_{\theta,*}\|_{C([0,\theta];L^2(0,1))} + \|\lambda_{\theta,*}\|_{L^2(0,\theta;H_0^1(0,1))}\leq C\delta$. Reversing time and setting $\rho(t):=\lambda_{\theta,*}(\theta-t)$, $\widetilde y(t):=y_{\theta,*}(\theta-t)$ and $\widetilde y_d(t):=y_d(\theta-t)$, $\rho$ is periodic and
\begin{equation}\label{periodic_heat}
\partial_t\rho+A_0\rho=F, \qquad F:=\widetilde y\,\partial_x\rho+\widetilde y_d-\widetilde y.
\end{equation}
The above estimates and the embedding $H_0^1(0,1)\hookrightarrow L^\infty(0,1)$ yield
$\|F\|_{L^2(0,\theta;L^2(0,1))}\leq C\delta$.
The periodic maximal-$L^2$ estimate for $\partial_t+A_0$ reads
$$
\|\partial_t\rho\|_{L^2(0,\theta;L^2)} + \|A_0\rho\|_{L^2(0,\theta;L^2)} + \|\rho\|_{C([0,\theta];D(A_0^{1/2}))} \leq C_\theta\|F\|_{L^2(0,\theta;L^2)}.
$$
This indeed follows by expanding the periodic functions in the temporal Fourier basis and in the Dirichlet eigenbasis of $A_0$: each coefficient is divided by $2\pi i n/\theta+\pi^2k^2$, and the claimed $H^1$-in-time and $D(A_0)$ bounds follow by Parseval's identity; the continuous trace in $D(A_0^{1/2})=H_0^1(0,1)$ is the standard Hilbert-space energy trace estimate. Hence $\|\rho\|_{C([0,\theta];H_0^1)}\leq C\delta$. Returning to \eqref{periodic_heat}, we now have $F\in L^\infty(0,\theta;L^2(0,1))$ and $\|F\|_{L^\infty L^2}\leq C\delta$.

Extend $F$ and $\rho$ periodically to $\mathbb R$. The variation-of-constants formula over one complete period gives, for every $t$,
\begin{equation}\label{periodic_mild}
\rho(t)=(I-e^{-\theta A_0})^{-1}\int_0^\theta e^{-sA_0}F(t-s) \d s.
\end{equation}
Since $A_0\geq\pi^2 I$, the inverse in \eqref{periodic_mild} is bounded on $L^2(0,1)$; moreover $\|A_0^\alpha e^{-sA_0}\|_{\mathcal L(L^2)}\leq C_\alpha s^{-\alpha}$ for $s>0$. Because $\alpha<1$, \eqref{periodic_mild} yields $\|\rho\|_{L^\infty(0,\theta;D(A_0^\alpha))}\leq C_\alpha\delta$. This proves \eqref{periodic_regularity}. Finally, $D(A_0^\alpha)\hookrightarrow H^{2\alpha}(0,1)\hookrightarrow W^{1,\infty}(0,1)$ because $2\alpha>3/2$, and \eqref{periodic_lambdax} follows.
\end{proof}
\begin{proposition}\label{local_turnpike} 
There exist $M_0>0$ and $\mu>0$, depending only on $\theta$ and $\omega$, with the following property. For every $M\in(0,M_0]$, there exists $\varepsilon_M>0$, depending only on $\theta$, $\omega$ and $M$, such that, for every $T>0$, every $y_0\in H_0^1(0,1)$ and every $\theta$-periodic $y_d\in C([0,\infty);L^2(0,1))$ satisfying
	\begin{equation}\label{smallness_target_datum_varepsilon}
		\|y_0\|_{H_0^1(0,1)}+ \normC{y_d} \leq \varepsilon_M,
	\end{equation} 
	then $(OCP)_T$ has a unique optimal triple $ \left(y_{T,*}(\cdot),u_{T,*}(\cdot),\lambda_{T,*}(\cdot)\right)$ satisfying \eqref{maxim}--\eqref{contrT}, and it has the exponential periodic turnpike property
	\begin{multline}\label{turnpike}
			\|y_{T,*}(t) - y_{\theta,*}(t)\|_{L^2(0,1)}+\|\lambda_{T,*}(t)-\lambda_{\theta,*}(t)\|_{L^2(0,1)} + \|u_{T,*}(t) - u_{\theta,*}(t)\|_{L^2(\omega)} \\\leq M\big(e^{-\mu t }+e^{-\mu(T-t)}\big),\quad \text{a.e. }  t \in[0,T],
	\end{multline}
	where $ \left(y_{\theta,*}(\cdot),u_{\theta,*}(\cdot),\lambda_{\theta,*}(\cdot)\right) $, the unique optimal triple of $(Per)_\theta$, is extended by $ \theta $-periodicity over the whole real line. The same constants $M_0$, $\mu$ and $\varepsilon_M$ apply when $y_d$ is replaced by any phase shift $y_d(\cdot+s)$, $s\in\mathbb R$.
\end{proposition}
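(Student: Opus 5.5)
Writing $y=y_{\theta,*}+\eta$, $\lambda=\lambda_{\theta,*}+\zeta$, I will reformulate the optimality system \eqref{maxim}--\eqref{contrT} as a perturbation of the periodic system \eqref{maximp}--\eqref{contrP}. Subtracting the two systems gives
\begin{equation*}
\begin{aligned}
&\partial_t\eta-\partial_{xx}\eta+\partial_x(y_{\theta,*}\eta)-\chi_\omega\zeta=-\eta\partial_x\eta,\\
&-\partial_t\zeta-\partial_{xx}\zeta-y_{\theta,*}\partial_x\zeta+\eta-\eta\,\partial_x\lambda_{\theta,*}=\eta\,\partial_x\zeta,
\end{aligned}
\end{equation*}
with $\eta(0)=y_0-y_{\theta,*}(0)$ and $\zeta(T)=-\lambda_{\theta,*}(T)$. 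The left-hand side is the extremal system of the time-periodic LQ problem obtained by linearizing $(Per)_\theta$ along its optimal pair. This LQ problem has dynamics $\partial_t z-\partial_{xx}z+\partial_x(y_{\theta,*}z)=\chi_\omega v$ and quadratic cost $\int(\|z\|^2-\langle z\partial_x\lambda_{\theta,*},z\rangle+\|v\|^2)$. By Lemma~\ref{lem:periodic_regularity}, the weight $\partial_x\lambda_{\theta,*}$ is bounded in $L^\infty$ by $C_\alpha\normC{y_d}$, so for $\normC{y_d}$ small the state weight is $\geq\frac12\|z\|^2$. The coefficient $y_{\theta,*}$ is small in $C([0,\theta];H_0^1)$. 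The periodic Riccati theory of \cite{TZZ} then applies: the periodic Riccati operator $P(t)$ exists, and the closed-loop evolution $U(t,s)$ generated by $-\partial_{xx}+\partial_x(y_{\theta,*}\cdot)+\chi_\omega P(t)$ is exponentially stable, with $\|U(t,s)\|\leq Ce^{-\mu(t-s)}$. The coefficients lie in a small ball uniformly in the phase of $y_d$, and the stability of the Riccati closed loop is robust under small perturbation (compare with the heat equation, where $y_{\theta,*}=0$ and $\lambda_{\theta,*}=0$). I will use this robustness to take $C$ and $\mu$ depending only on $\theta,\omega$, uniformly over phase shifts. This fixes $\mu$ (say half the closed-loop rate) and determines $M_0$.

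Next, following the decoupling in \cite{TZZ}, I will solve the inhomogeneous linear two-point boundary value problem with right-hand sides $(f,g)$ and data $(\eta_0,\zeta_T)$. I set $\zeta=-P\eta+h$, where $h$ solves a backward equation driven by the adjoint closed loop $U^*$ and $\eta$ solves the forward closed loop. This yields an explicit solution operator $\mathcal S$ which is bounded in the weighted norm
$$\|(\eta,\zeta)\|_X:=\sup_{t}\frac{\|\eta(t)\|_{H_0^1}+\|\zeta(t)\|_{H_0^1}}{e^{-\mu t}+e^{-\mu(T-t)}}$$
(plus corresponding $L^2H^2$-type parts), with
$$\|\mathcal S\|\lesssim\|\eta_0\|+\|\zeta_T\|+\|(f,g)\|_{Y},$$
where $Y$ is the analogous weighted space for the forcing. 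The key convolution estimate is that $\int_0^t e^{-2\mu(t-s)}(e^{-\mu s}+e^{-\mu(T-s)})\,\d s\lesssim e^{-\mu t}+e^{-\mu(T-t)}$, uniformly in $T$. I will work in $H_0^1$-type spaces so that the nonlinear terms $\eta\partial_x\eta$ and $\eta\partial_x\zeta$ are controlled by products of weighted norms; the product of two weights is bounded by one weight. This is why $y_0\in H_0^1$ is assumed and why Lemma~\ref{lem:periodic_regularity} gives $H_0^1$ bounds on the periodic triple.

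The fixed point map is $(\eta,\zeta)\mapsto\mathcal S(\eta_0,\zeta_T,-\eta\partial_x\eta,\eta\partial_x\zeta)$. The data are small: $\|\eta_0\|_{H_0^1}\leq\|y_0\|_{H_0^1}+C\normC{y_d}\leq C\varepsilon_M$ and $\|\zeta_T\|_{H_0^1}\leq C\normC{y_d}$. Hence, choosing $\varepsilon_M$ small relative to $M$, the map sends the ball of radius $M/C$ in $X$ into itself and is a contraction there. Banach's fixed point theorem gives a solution of \eqref{maxim}--\eqref{contrT} satisfying \eqref{turnpike}, with $u=\chi_\omega\lambda$, and the $L^2$ norms are dominated by the $H_0^1$ norms.

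The final step identifies this solution with the minimizer. Its control $u=\chi_\omega\lambda_{\theta,*}+\chi_\omega\zeta$ has $L^\infty(0,T;L^2(\omega))$ norm at most $C\normC{y_d}+M_0\leq r_0$ if $M_0$ is small. By Proposition~\ref{local_uniqueness} and Lemma~\ref{uniform_bound_optima}(ii), the true minimizer is also in $B_{r_0}$. On $B_{r_0}$, Lemma~\ref{lem:convexity}(i) makes $J_T$ strictly convex, so any critical point in $B_{r_0}$ is the unique minimizer, and the two coincide. \emph{The main obstacle} I expect is the first step: establishing the exponential stability of the Riccati closed loop with a rate and constants uniform over the small target ball and over phase shifts. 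The coefficients $y_{\theta,*}$ and $\partial_x\lambda_{\theta,*}$ make the LQ problem time-varying, and \cite{TZZ} treats the autonomous periodic case. I plan to handle this by a perturbation argument around the heat-equation Riccati operator, using the bounds of Lemma~\ref{lem:periodic_regularity}.
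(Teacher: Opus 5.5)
Your proposal is correct and follows the same overall architecture as the paper's proof in Appendix~\ref{app:C}. Both proofs perturb around the periodic optimal triple and use the linearized periodic LQ problem with state weight $G(t)=I-\partial_x\lambda_{\theta,*}(t)\geq\frac12 I$ (via Lemma~\ref{lem:periodic_regularity}). Both then use Riccati decoupling with a uniform closed-loop rate, take a fixed point among pairs with a pointwise weighted $H_0^1$ envelope, and identify it with the minimizer through strict convexity on $B_{r_0}$ and Lemma~\ref{uniform_bound_optima}(ii).

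\textbf{Fixed-point mechanism.} This is the genuine difference. The paper freezes the quadratic remainders and applies Schauder's theorem. That forces it to prove that $X_M$ is closed and that $\mathcal K$ is continuous and compact in $L^2(0,T;L^2(0,1))^2$, via Aubin--Lions and weak convergence of the frozen products. You instead use two facts: the frozen map is affine, and $(\eta,\zeta)\mapsto(-\eta\partial_x\eta,\eta\partial_x\zeta)$ is locally Lipschitz from the weighted $L^\infty(H_0^1)$ space into the weighted $L^\infty(L^2)$ space. The latter follows from the one-dimensional Agmon inequality and $w^2\leq 2w$ for $w(t)=e^{-\mu t}+e^{-\mu(T-t)}$. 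Banach's theorem then suffices. This is more elementary, and it also gives uniqueness of the solution of the optimality system in the ball.

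\textbf{Weights and identification.} Taking $\mu$ to be half the closed-loop rate with forcing weighted by $w$ is interchangeable with the paper's choice of the full rate with forcing weighted by $e^{-2\mu t}+e^{-2\mu(T-t)}$. In the identification step, you put $u^\sharp$ into $B_{r_0}$ by imposing $M_0\lesssim r_0$, which is legitimate since $r_0$ is absolute. The paper instead keeps $M_0=\min(1,\frac{1}{2C_\star})$ and proves the refined bound $A\leq 2C_\star(d_0+\Lambda)\leq C^\flat\varepsilon_M$ at the fixed point.

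\textbf{Two points to make explicit.} First, the decoupling must use the finite-horizon Riccati solution $P_T$ with $P_T(T)=0$, so that $h(T)=\zeta_T$. The difference $P_T-P_\theta$, which is exponentially small in $T-t$, is then treated as a perturbation. With the periodic $P_\theta$ alone, $h(T)$ would be recoupled to the unknown $\eta(T)$. Second, the Riccati representation only yields weighted $L^2$ bounds. The $H_0^1$ envelope therefore needs a separate parabolic upgrade on unit time windows, as in the paper.

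\textbf{Your anticipated main obstacle.} It is lighter than you expect and needs no perturbation of the heat-equation Riccati operator. For small $y_{\theta,*}$, the family $A(t)$ is itself uniformly exponentially stable, which bounds $P_\theta$ by the uncontrolled cost. Since $G\geq\frac12 I$, the quantity $\langle P_\theta z,z\rangle$ is a Lyapunov function for the closed loop with dissipation at least $\frac12\|z\|^2$. A Datko-type argument then gives a rate depending only on these uniform bounds. This is exactly the stabilizability/detectability input the paper invokes.
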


\subsection{Two cost comparison lemmas}\label{sec:global_attractor}
Let $T>0$, let $y_0\in L^2(0,1)$, and let $y_d\in C([0,\infty);L^2(0,1))$ be $\theta$-periodic. 
We first estimate the difference between the value of $J_T(\cdot)$ at a control $ u $ and $\frac{T}{\theta}$ times the value of $J_\theta(\cdot)$ at a periodic control $ \bar{u} $, assuming that $ u $ and $ \bar{u} $ satisfy a turnpike-like estimate.
\begin{lemma}\label{lemma_estimate_functional}
	Let $\bar{u}\in L^2(0,\theta;L^2 (\omega))$ be a control and  $\bar{y}$ be the corresponding solution to \eqref{per_state}, both extended by $ \theta $-periodicity over the whole real line. Let $u\in L^2(0,T;L^2 (\omega))$ be a control and $y$ be the corresponding solution to \eqref{state}. Suppose that
	\begin{equation}\label{lemma_estimate_functional_turnpike_estimate}
			\|y(t) - \bar{y}(t)\|_{L^2(0,1)} + \|u(t) - \bar{u}(t)\|_{L^2(\omega)} \leq M\big(e^{-\mu t }+e^{-\mu(T-t)}\big),\quad \text{a.e. } t \in[0,T],
	\end{equation}for some $M>0$ and $\mu>0$.
	Then,
	\begin{equation}\label{lemma_estimate_functional_conclusion}
		\Big| J_{T}(u)-\frac{T}{\theta}J_\theta(\bar{u})\Big|
		\leq C_4\big(1+\|\bar{u}\|_{ L^2(0,\theta;L^2 (\omega))}+\|\bar{u}\|_{ L^2(0,\theta;L^2 (\omega))}^2+\normC{y_d}+\normC{y_d}^2\big),
	\end{equation}
	where
	\begin{equation}\label{C4}
		C_4:=\frac{2M^2}{\mu}+c_\mu\big(1+\sqrt\theta\, C_2\big)+1+2\theta C_2^2,
		\qquad
		c_\mu:=\frac{2M}{\sqrt{2\mu}\left(1-e^{-\mu\theta}\right)},
	\end{equation}
	depends only on $M,\mu,\theta$ and is independent of $T$.
\end{lemma}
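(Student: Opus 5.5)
The plan is to compare the two integrals pointwise in time. Set $N:=\lfloor T/\theta\rfloor$, so that $T=N\theta+s$ with $s\in[0,\theta)$. By periodicity of $\bar y,\bar u,y_d$,
\begin{equation*}
\int_0^{N\theta}\big(\|\bar y-y_d\|^2_{L^2(0,1)}+\|\bar u\|^2_{L^2(\omega)}\big)\d t=2NJ_\theta(\bar u).
\end{equation*}
The leftover piece on $(N\theta,T)$ and the difference $\frac{T}{\theta}J_\theta-NJ_\theta=\frac{s}{\theta}J_\theta$ are both bounded by $J_\theta(\bar u)$. By Lemma~\ref{Energy}(i),
\begin{equation*}
J_\theta(\bar u)\leq \|\bar y\|^2_{L^2L^2}+\|y_d\|^2_{L^2L^2}+\tfrac12\|\bar u\|^2_{L^2L^2}\leq \theta C_2^2\|\bar u\|^2+\theta\normC{y_d}^2+\tfrac12\|\bar u\|^2 .
\end{equation*}
This yields the term $2\theta C_2^2$ together with the $\normC{y_d}^2$ and $\|\bar u\|^2$ contributions in \eqref{lemma_estimate_functional_conclusion}. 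The $\theta\normC{y_d}^2$ part must be absorbed into the listed constants, and I expect the bookkeeping there to be only cosmetic. It therefore remains to bound $J_T(u)-\tfrac12\int_0^T(\|\bar y-y_d\|^2+\|\bar u\|^2)$.

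For this I use $a^2-b^2=(a-b)^2+2b(a-b)$ with $a=\|y-y_d\|$, $b=\|\bar y-y_d\|$, and similarly for the controls. Write $\delta(t):=\|y(t)-\bar y(t)\|+\|u(t)-\bar u(t)\|$. The squared part is bounded by
\begin{equation*}
\tfrac12\int_0^T\delta^2\leq M^2\int_0^T\big(e^{-2\mu t}+e^{-2\mu(T-t)}\big)\d t\leq \frac{M^2}{\mu}\leq \frac{2M^2}{\mu}.
\end{equation*}
The cross term is $\int_0^T\big(\|\bar y-y_d\|+\|\bar u\|\big)\,\delta\,\d t$. On each period window $[k\theta,(k+1)\theta]$ I apply Cauchy--Schwarz. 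The periodic factor has $L^2(k\theta,(k+1)\theta)$ norm at most
\begin{equation*}
\sqrt\theta\,C_2\|\bar u\|+\sqrt\theta\,\normC{y_d}+\|\bar u\|,
\end{equation*}
by \eqref{lemma1.3.} again. The factor $\delta$ on that window has $L^2$ norm at most
\begin{equation*}
\frac{M}{\sqrt{2\mu}}\Big(e^{-\mu k\theta}+e^{-\mu(T-(k+1)\theta)}\Big)\cdot\sqrt{1-e^{-2\mu\theta}}\leq \frac{M}{\sqrt{2\mu}}\Big(e^{-\mu k\theta}+e^{-\mu(T-(k+1)\theta)}\Big).
\end{equation*}
Summing the two geometric series over $k$, including the final partial window, gives at most $\frac{2M}{\sqrt{2\mu}(1-e^{-\mu\theta})}=c_\mu$.

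The cross term is thus bounded by $c_\mu\big((1+\sqrt\theta C_2)\|\bar u\|+\sqrt\theta\normC{y_d}\big)$. Combining this with the previous bounds, and using $x\leq 1+x^2$ wherever a constant needs adjusting, gives \eqref{lemma_estimate_functional_conclusion} with $C_4$ as in \eqref{C4}, independent of $T$. The only delicate step is this window-by-window summation, which must produce a $T$-independent constant. It works because both exponentials are summable, counted from $t=0$ and from $t=T$ respectively. One must also track which $\theta$-dependent factors are allowed to enter $C_4$, in particular the $\sqrt\theta\normC{y_d}$ term, which I plan to absorb into $c_\mu(1+\sqrt\theta C_2)\normC{y_d}$ since $C_2\geq 1$.
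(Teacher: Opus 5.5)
Your route is the paper's: you split off the leftover period, expand $a^2-b^2=(a-b)^2+2b(a-b)$, and control the cross term by Cauchy--Schwarz on windows of length $\theta$ summed against geometric series. Your final coefficient bookkeeping is also right, including absorbing $2\theta\normC{y_d}^2$ and $c_\mu\sqrt\theta\,\normC{y_d}$ via $C_2\geq1$.

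The one step that does not hold as written is the claim that summing your window bounds over $[k\theta,(k+1)\theta]$, including the final partial window, gives $c_\mu$. Write $N=\lfloor T/\theta\rfloor$ and $s=T-N\theta\in(0,\theta)$. For the backward exponential, your bound on the partial window $(N\theta,T)$ is $\frac{M}{\sqrt{2\mu}}e^{-\mu(T-(N+1)\theta)}=\frac{M}{\sqrt{2\mu}}e^{\mu(\theta-s)}$, which already exceeds $\frac{M}{\sqrt{2\mu}}$. The full windows add $\frac{M}{\sqrt{2\mu}}\sum_{j=0}^{N-1}e^{-\mu(s+j\theta)}$. For $s\to0^+$ and $N$ large, the backward sum alone therefore approaches $\frac{M}{\sqrt{2\mu}}\big(\frac{1}{1-e^{-\mu\theta}}+e^{\mu\theta}\big)$, well above the $\frac{M}{\sqrt{2\mu}(1-e^{-\mu\theta})}$ you need.

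Computing the partial window exactly and keeping the factors $\sqrt{1-e^{-2\mu\theta}}$ does not rescue a count anchored at $0$ in general. When $e^{-\mu\theta}$ is small, the resulting total for both exponentials has supremum over $T$ close to $(1+\sqrt2)\frac{M}{\sqrt{2\mu}}$, whereas $c_\mu$ is close to $\frac{2M}{\sqrt{2\mu}}$. So, as written, you obtain a $T$-independent constant but not the $C_4$ of the statement.

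The repair is what your last sentence already suggests, and it is what the paper does. Bound $\delta\leq Me^{-\mu t}+Me^{-\mu(T-t)}$ and treat the two pieces with different windows:
\begin{itemize}
\item for $Me^{-\mu t}$, use $[k\theta,(k+1)\theta]\cap[0,T]$, anchored at $0$;
\item for $Me^{-\mu(T-t)}$, use $[T-(j+1)\theta,T-j\theta]\cap[0,T]$, $j\geq0$, anchored at $T$. Equivalently, substitute $t\mapsto T-t$, which keeps the periodic factor $\theta$-periodic.
\end{itemize}
Every window has length at most $\theta$. Hence the periodic factor $\|\bar y-y_d\|_{L^2(0,1)}+\|\bar u\|_{L^2(\omega)}$ has $L^2$ norm there at most its norm over one period. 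On the window at distance $j\theta$ from its anchor, the corresponding exponential has $L^2$ norm at most $\frac{M}{\sqrt{2\mu}}e^{-\mu j\theta}$. Each series then sums to at most $\frac{M}{\sqrt{2\mu}(1-e^{-\mu\theta})}$, which gives exactly $c_\mu$ and hence the stated $C_4$.
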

\begin{proof}
	Set $g(t):=M\left(e^{-\mu t}+e^{-\mu(T-t)}\right)$, so that
	\begin{equation}\label{gL2}
		\|g\|^2_{L^2(0,T)}\leq 2M^2\int_0^T\big(e^{-2\mu t}+e^{-2\mu(T-t)}\big)\d t\leq \frac{2M^2}{\mu}.
	\end{equation}
	Moreover, for every nonnegative $\theta$-periodic $\varphi\in L^2_{loc}([0,\infty))$, covering $(0,T)$ by periods and using the Cauchy--Schwarz inequality,
	\begin{equation*}
		\int_0^T e^{-\mu t}\varphi(t) \d t\leq \sum_{k\geq0}e^{-\mu k\theta}\int_0^\theta e^{-\mu s}\varphi(s) \d s\leq \frac{\|\varphi\|_{L^2(0,\theta)}}{\sqrt{2\mu}\left(1-e^{-\mu\theta}\right)} ;
	\end{equation*}
	the same bound holds for $\int_0^Te^{-\mu(T-t)}\varphi(t) \d t$ (change of variable $t\mapsto T-t$, the function $\varphi(T-\cdot)$ being $\theta$-periodic as well). Hence
	\begin{equation}\label{gphi}
		\int_0^T g(t)\varphi(t) \d t\leq c_\mu \|\varphi\|_{L^2(0,\theta)} .
	\end{equation}
	Finally, writing $\int_0^T h=\lfloor T/\theta\rfloor\int_0^\theta h+\int_{\lfloor T/\theta\rfloor\theta}^{T}h$ for a nonnegative $\theta$-periodic $h\in L^1_{loc}$,
	\begin{equation}\label{periodleftover}
		\Big|\int_0^T h(t) \d t-\frac{T}{\theta}\int_0^\theta h(t) \d t\Big|\leq 2\int_0^\theta h(t) \d t .
	\end{equation}
	Now, $2J_T(u)-\frac{2T}{\theta}J_\theta(\bar u)=A_u+A_y$ with
	$A_u:=\int_0^T\|u\|^2_{L^2(\omega)}-\frac{T}{\theta}\int_0^\theta\|\bar u\|^2_{L^2(\omega)}$ and
	$A_y:=\int_0^T\|y-y_d\|^2_{L^2(0,1)}-\frac{T}{\theta}\int_0^\theta\|\bar y-y_d\|^2_{L^2(0,1)}$.
	Expanding $\|u\|^2=\|u-\bar u\|^2+2\langle u-\bar u,\bar u\rangle+\|\bar u\|^2$ and applying \eqref{lemma_estimate_functional_turnpike_estimate}, \eqref{gL2}, \eqref{gphi} with $\varphi(t)=\|\bar u(t)\|_{L^2(\omega)}$, and \eqref{periodleftover} with $h(t)=\|\bar u(t)\|^2_{L^2(\omega)}$,
	\begin{equation*}
		|A_u|\leq \|g\|^2_{L^2(0,T)}+2\int_0^Tg\,\varphi+2\|\bar u\|^2_{L^2(0,\theta;L^2(\omega))}
		\leq \frac{2M^2}{\mu}+2c_\mu\|\bar u\|_{L^2(0,\theta;L^2(\omega))}+2\|\bar u\|^2_{L^2(0,\theta;L^2(\omega))} .
	\end{equation*}
	Similarly, with $a(t):=\|y(t)-y_d(t)\|_{L^2(0,1)}$ and the $\theta$-periodic function $\bar a(t):=\|\bar y(t)-y_d(t)\|_{L^2(0,1)}$, using $|a-\bar a|\leq\|y-\bar y\|_{L^2(0,1)}\leq g$,
	\begin{equation*}
		|A_y|\leq \int_0^T(a-\bar a)^2+2\int_0^Tg\,\bar a+2\|\bar a\|^2_{L^2(0,\theta)}
		\leq \frac{2M^2}{\mu}+2c_\mu\|\bar a\|_{L^2(0,\theta)}+2\|\bar a\|^2_{L^2(0,\theta)} ,
	\end{equation*}
	and, by Lemma~\ref{Energy}(i), $\|\bar a\|_{L^2(0,\theta)}\leq\sqrt\theta\,\|\bar y\|_{C([0,\theta];L^2(0,1))}+\sqrt\theta\,\normC{y_d}\leq\sqrt\theta\, C_2\|\bar u\|_{L^2(0,\theta;L^2(\omega))}+\sqrt\theta\,\normC{y_d}$.
	Combining the last three estimates and using $(p+q)^2\leq2p^2+2q^2$, we obtain \eqref{lemma_estimate_functional_conclusion} with $C_4$ as in \eqref{C4}.
\end{proof}

The second lemma bounds the value of $(OCP)_T$ from above in terms of the value of $(Per)_\theta$.
\begin{lemma}\label{upper_bound_value}
	Let $R_3\in\big(0,\frac{1}{2\sqrt{2\theta}+2\sqrt{4\theta+2}}\big)$ and let $y_d\in C([0,\infty);L^2 (0,1))$ be $\theta$-periodic with $ \normC{y_d}\leq R_3$. Then, for every $y_0\in L^2(0,1)$ and every $T>0$,
	\begin{equation}\label{value_function_ineq}
		\inf_{u\in L^2(0,T;L^2 (\omega))} J_{T}(u)\leq \frac{T}{\theta}\inf_{u\in L^2(0,\theta;L^2 (\omega))}J_\theta (u) + K_2,
	\end{equation}
	where
	\begin{equation}\label{K2}
		K_2:=2\theta R_3^2+\frac{\sqrt\theta R_3}{1-e^{-\theta/2}}\big(\|y_0\|_{L^2(0,1)}+C_2\sqrt\theta R_3\big)+\frac{1}{2}\big(\|y_0\|_{L^2(0,1)}+C_2\sqrt\theta R_3\big)^2
	\end{equation}
	is independent of $T$.
\end{lemma}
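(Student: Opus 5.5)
The plan is to build an explicit competitor for $(OCP)_T$ by running an optimal periodic control from the given datum $y_0$, and to check that its cost exceeds $\frac{T}{\theta}\inf J_\theta$ by at most a $T$-independent amount. Let $(\bar y,\bar u)$ be an optimal pair of $(Per)_\theta$, which exists as recalled before \eqref{uthetasmall}, and extend it $\theta$-periodically. By \eqref{uthetasmall} we have $\|\bar u\|_{L^2(0,\theta;L^2(\omega))}\le\sqrt\theta R_3$. Lemma~\ref{Energy}(i) then gives $\|\bar y\|_{C([0,\theta];L^2)}\le C_2\sqrt\theta R_3$. Note that $\sqrt\theta\,C_2=\sqrt\theta+\sqrt{2\theta+1}$, so the hypothesis on $R_3$ is exactly $\sqrt2\,C_2\sqrt\theta R_3<\frac12$. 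This is the same smallness regime as in Lemma~\ref{Energy}(ii), and it also makes the periodic state unique. We take $u:=\bar u|_{(0,T)}$ as competitor and let $y$ solve \eqref{state} with $y(0)=y_0$. Then $\inf J_T\le J_T(u)$, and it remains to bound $J_T(u)$.

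The key step is exponential decay of $w:=y-\bar y$. It satisfies $\partial_t w-\partial_{xx}w+w\partial_x w+\partial_x(\bar y w)=0$ with $w(0)=y_0-\bar y(0)$, so $\|w(0)\|_{L^2}\le \|y_0\|_{L^2}+C_2\sqrt\theta R_3$. We multiply by $w$. The cubic term vanishes, as in Lemma~\ref{energyT}, and $\int\partial_x(\bar y w)w=-\int \bar y\,w\,\partial_x w$. We bound this last term by the coercivity estimate \eqref{ineq:corecive} with $\varepsilon=1$ and $u=\bar y(t)$, and then use Poincar\'e, exactly as in the proof of Lemma~\ref{Energy}(ii). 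This yields $\frac{d}{dt}\|w\|^2+\|w\|^2\le0$, since the constant $C_{R_2}$ computed there is already $>1$. Hence $\|w(t)\|_{L^2}\le e^{-t/2}\|w(0)\|_{L^2}$ and $\int_0^T\|w\|^2\le \|w(0)\|^2$. I expect this to be the only delicate point. The difficulty is to make sure the $L^\infty_tL^2_x$ smallness of $\bar y$ alone suffices to absorb the transport term, uniformly for large $w(0)$. It does, because the nonlinear self-interaction drops out entirely.

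The last step is the cost expansion. The control part gives $\int_0^T\|\bar u\|^2$. For the state part we write
\[
\|y-y_d\|^2=\|\bar y-y_d\|^2+2\langle w,\bar y-y_d\rangle+\|w\|^2 .
\]
The last term contributes at most $\frac12\|w(0)\|^2$ to $J_T$. For the cross term, $\bar a(t):=\|\bar y(t)-y_d(t)\|_{L^2}$ is $\theta$-periodic with $\bar a\le (C_2\sqrt\theta+1)R_3$. Summing $e^{-t/2}$ over periods, as in \eqref{gphi}, gives $\int_0^T e^{-t/2}\bar a\,dt\le \frac{1}{1-e^{-\theta/2}}\int_0^\theta \bar a$. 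Bounding $\int_0^\theta\bar a$ by $\sqrt\theta\|\bar a\|_{L^2(0,\theta)}\le \sqrt\theta\sqrt{2J_\theta(\bar u)}\le\sqrt\theta\cdot\sqrt\theta R_3$ produces, up to bookkeeping, the middle term of $K_2$ in \eqref{K2}. Finally, \eqref{periodleftover} applied to $h=\|\bar y-y_d\|^2+\|\bar u\|^2$ gives $\frac12\int_0^T h\le \frac T\theta J_\theta(\bar u)+2J_\theta(\bar u)$. Using $J_\theta(\bar u)\le J_\theta(0)\le \frac\theta2R_3^2$, this last contribution is at most $\theta R_3^2\le 2\theta R_3^2$. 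Adding the three contributions yields \eqref{value_function_ineq} with a $T$-independent constant of the form \eqref{K2}.
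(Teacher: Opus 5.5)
Your construction (running the periodic optimal control from $y_0$), the decay $\|w(t)\|_{L^2(0,1)}\le e^{-t/2}\|w(0)\|_{L^2(0,1)}$ via the coercivity estimate with $\varepsilon=1$, and the three-term cost expansion are exactly the paper's proof. Applying \eqref{periodleftover} jointly to $h=\|\bar y-y_d\|^2_{L^2(0,1)}+\|\bar u\|^2_{L^2(\omega)}$ even gives $\theta R_3^2$ instead of $2\theta R_3^2$ for the first term.

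The step that does not deliver what you claim is the cross term. After summing over periods, you drop the weight $e^{-s/2}$ on the first period and then use $\int_0^\theta\bar a\le\sqrt\theta\,\|\bar a\|_{L^2(0,\theta)}$. This yields only $\int_0^Te^{-t/2}\bar a\,dt\le\frac{\theta R_3}{1-e^{-\theta/2}}$. The middle term is therefore $\frac{\theta R_3}{1-e^{-\theta/2}}\big(\|y_0\|_{L^2(0,1)}+C_2\sqrt\theta R_3\big)$, not the $\frac{\sqrt\theta R_3}{1-e^{-\theta/2}}(\cdots)$ of \eqref{K2}. For $\theta>1$ this is larger by a factor $\sqrt\theta$. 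Since $\|y_0\|_{L^2(0,1)}$ is arbitrary, your gain of $\theta R_3^2$ in the first term cannot absorb it. So ``up to bookkeeping'' hides a real loss, and your chain does not prove \eqref{value_function_ineq} with the stated $K_2$ when $\theta>1$.

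The repair is the one built into \eqref{gphi} with $\mu=\frac12$: keep the weight and apply Cauchy--Schwarz against it,
\[
\int_0^\theta e^{-s/2}\bar a(s)\,ds\le\Big(\int_0^\infty e^{-s}\,ds\Big)^{1/2}\|\bar a\|_{L^2(0,\theta)}\le\sqrt\theta R_3 .
\]
With this, your three contributions sum to at most $\theta R_3^2+\frac{\sqrt\theta R_3}{1-e^{-\theta/2}}\|w(0)\|_{L^2(0,1)}+\frac12\|w(0)\|^2_{L^2(0,1)}\le K_2$. In Lemma~\ref{lemma_absorbing_time} only the $T$-independence of the constant is used, so the slip is harmless there, but it matters for the lemma as stated.
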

\begin{proof}
	Let $(y_{\theta,*},u_{\theta,*})$ be an optimal pair of $(Per)_\theta$, extended by $\theta$-periodicity, and let $\hat y$ be the solution of \eqref{state} with control $u_{\theta,*}|_{(0,T)}$ and initial datum $y_0$. The difference $\eta:=\hat y-y_{\theta,*}$ solves
	\begin{equation*}
		\partial_t\eta-\partial_{xx}\eta+\eta\,\partial_x\eta+\partial_x\left(y_{\theta,*}\eta\right)=0,\qquad \eta(t,0)=\eta(t,1)=0,\qquad \eta(0)=y_0-y_{\theta,*}(0).
	\end{equation*}
	Multiplying by $\eta$ and using $\int_0^1\eta^2\partial_x\eta=0$ and $-\int_0^1\partial_x(y_{\theta,*}\eta)\,\eta=\int_0^1 y_{\theta,*}\,\eta\,\partial_x\eta$, we get
	\begin{equation*}
		\frac{1}{2}\frac{\d}{\d t} \norm{\eta}^2_{L^2(0,1)}+\norm{\partial_x\eta}^2_{L^2(0,1)} = \int_0^1 y_{\theta,*}\,\eta\,\partial_x\eta .
	\end{equation*}
	By \eqref{lemma1.3.} and \eqref{uthetasmall}, $\|y_{\theta,*}(t)\|_{L^2(0,1)}\leq C_2\sqrt\theta\,\normC{y_d}\leq C_2\sqrt{\theta}R_3<\frac{1}{2\sqrt2}$ for all $t$; hence, by the coercivity estimate \eqref{ineq:corecive} (with $u=y_{\theta,*}(t)$, $\varepsilon=1$) and the Poincar\'e inequality, exactly as in \eqref{uniform_coercive_rho},
$\frac{\d}{\d t}\norm{\eta}^2_{L^2(0,1)}+\norm{\eta}^2_{L^2(0,1)}\leq0$, whence $\|\eta(t)\|^2_{L^2(0,1)}\leq e^{-t}\,\|\eta(0)\|^2_{L^2(0,1)}$. Now,
	\begin{equation*}
		\inf J_T\leq J_T(u_{\theta,*})=\frac{T}{\theta}J_\theta(u_{\theta,*})+\frac{1}{2} B_u+\frac{1}{2} B_y ,
	\end{equation*}
with 
$$
B_u:=\int_0^T\|u_{\theta,*}\|^2_{L^2(\omega)}-\frac T\theta\int_0^\theta\|u_{\theta,*}\|^2_{L^2(\omega)},
\quad
B_y:=\int_0^T\|\hat y-y_d\|^2_{L^2(0,1)}-\frac T\theta\int_0^\theta\|y_{\theta,*}-y_d\|^2_{L^2(0,1)} .
$$
By \eqref{periodleftover} and \eqref{uthetasmall}, $B_u\leq2\|u_{\theta,*}\|^2_{L^2(0,\theta;L^2(\omega))}\leq2\theta R_3^2$. Expanding $\|\hat y-y_d\|^2=\|y_{\theta,*}-y_d\|^2+2\langle\eta,y_{\theta,*}-y_d\rangle+\|\eta\|^2$ and using \eqref{periodleftover}, the exponential decay of $\eta$ and, as in \eqref{gphi} (with $\mu=\frac{1}{2}$ and only the term $e^{-t/2}$), the periodicity of $\bar a(t):=\|y_{\theta,*}(t)-y_d(t)\|_{L^2(0,1)}$,
	\begin{multline*}
		B_y\leq 2\int_0^\theta\bar a^2+2\|\eta(0)\|_{L^2(0,1)}\int_0^Te^{-t/2}\,\bar a(t)\d t+\int_0^T\|\eta\|^2_{L^2(0,1)}\\
		\leq 4J_\theta(u_{\theta,*})+\frac{2\|\eta(0)\|_{L^2(0,1)}\|\bar a\|_{L^2(0,\theta)}}{1-e^{-\theta/2}}+\|\eta(0)\|^2_{L^2(0,1)} .
	\end{multline*}
	Since $J_\theta(u_{\theta,*})=\inf J_\theta$, and since $2J_\theta(u_{\theta,*})\leq2J_\theta(0)\leq\theta R_3^2$, $\|\bar a\|_{L^2(0,\theta)}\leq\sqrt{2J_\theta(0)}\leq\sqrt\theta\,R_3$ and $\|\eta(0)\|_{L^2(0,1)}\leq\|y_0\|_{L^2(0,1)}+C_2\sqrt\theta R_3$, collecting the above estimates yields \eqref{value_function_ineq} with $K_2$ as in \eqref{K2}.
\end{proof}

\subsection{Global absorbing property and proof of Theorem~\ref{main1}}\label{proof_main1}
Let $ M_0 $ and $\mu$  be given by Proposition~\ref{local_turnpike}. 
For any fixed $M\in(0,M_0]$, let $\varepsilon_M$ be the threshold in \eqref{smallness_target_datum_varepsilon}. 
Denote by $y_{T,*}(\cdot;y_0,y_d)$ an optimal state of $(OCP)_T$. To define the absorbing time independently of the choice of an almost-everywhere representative, set
$$
E_M:=\big\{t\in(0,T) \,\mid\, y_{T,*}(t;y_0,y_d)\in H_0^1(0,1),\ \|y_{T,*}(t;y_0,y_d)\|_{H_0^1(0,1)} \leq\varepsilon_M/2\big\}
$$
and
\begin{equation}\label{t_s_definition}
t_M=t_M(y_0,y_d,T):=\inf\big\{s\in[0,T]\,\mid\, |E_M\cap(0,s)|>0\big\},
\end{equation}
with the convention $t_M=T$ if the set on the right-hand side is empty. Since $y_{T,*}\in W(0,T)$, one has $y_{T,*}(t)\in H_0^1(0,1)$ for a.e. $t\in(0,T)$. The definition implies
$\|y_{T,*}(t)\|_{H_0^1(0,1)}>\varepsilon_M/2$ for a.e. $t\in(0,t_M)$.
The next lemma shows that $t_M$ is bounded independently of $T$, and that beyond (essentially) the absorbing time the optimal triple satisfies the local turnpike estimate.
\begin{lemma}[Global absorbing property]\label{lemma_absorbing_time}
	Let $R>0$ and $M\in(0,M_0]$. There exist $\rho_{M}>0$ and $\tau_{M}:=\tau_M(\theta,\omega,R,M)>0$ such that the following holds:
	for every $y_0\in L^2(0,1)$ with $\|y_0\|_{L^2(0,1)}\leq R$, every $\theta$-periodic $y_d\in C([0,\infty);L^2(0,1))$ satisfying $\normC{y_d}\leq \rho_M$, every $T\geq \tau_M$ and every optimal triple $\left(y_{T,*},u_{T,*},\lambda_{T,*}\right)$ of $(OCP)_T$, there exists $\hat t_M\in(0,\tau_M]$ such that $\|y_{T,*}(\hat t_M)\|_{H_0^1(0,1)}\leq\varepsilon_M/2$ and
	\begin{multline}\label{lemma_opt_est_eq2}
		\|y_{T,*}(t) - y_{\theta,*}(t)\|_{L^2(0,1)}+\|\lambda_{T,*}(t)-\lambda_{\theta,*}(t)\|_{L^2(0,1)} + \|u_{T,*}(t) - u_{\theta,*}(t)\|_{L^2(\omega)} \\ \leq M\big(e^{-\mu (t-\hat t_M) }+e^{-\mu(T-t)}\big), \quad\text{a.e. } t \in[\hat t_M,T].
	\end{multline}
\end{lemma}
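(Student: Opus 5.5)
We argue in two steps: first an \emph{a priori} bound on $t_M$ coming from cost comparison and dissipation, then a restart argument at a good time $\hat t_M$ based on Proposition~\ref{local_turnpike} and the dynamic programming principle.

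\emph{Step 1: bounding $t_M$.} Choose $\rho_M\leq\min(\varepsilon,\varepsilon_M/2,R_3)$, where $\varepsilon$ comes from Proposition~\ref{local_uniqueness} and $R_3$ from Lemma~\ref{upper_bound_value}. Comparing with the control $u_{\theta,*}$, Lemma~\ref{upper_bound_value} gives
\[
J_T(u_{T,*})\leq \frac{T}{\theta}J_\theta(u_{\theta,*})+K_2, \qquad K_2=K_2(\theta,R,\rho_M).
\]
We also need a lower bound of the form $J_{t_M}(u_{T,*}|_{(0,t_M)})\geq \frac{t_M}{\theta}J_\theta(u_{\theta,*})+c_M t_M-C$ with $c_M>0$, together with the remark that the cost on $(t_M,T)$ is at least roughly $\frac{T-t_M}{\theta}J_\theta(u_{\theta,*})-C$. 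The latter follows from Lemma~\ref{lemma_estimate_functional} combined with optimality of the tail; alternatively, since $J_\theta(u_{\theta,*})\leq\frac{\theta}{2}\rho_M^2$ is tiny, it suffices to bound $\int_0^{t_M}\ell\geq c_Mt_M-C$ directly, where $\ell$ is the running cost.

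For the lower bound, use that on $(0,t_M)$ we have $\|\partial_x y_{T,*}\|_{L^2}>\varepsilon_M/2$ a.e. From \eqref{diffineq},
\[
\int_0^{t_M}\|\partial_x y\|^2\leq \|y_0\|^2+\int_0^{t_M}\|u\|^2 ,
\]
and hence
\[
\frac{\varepsilon_M^2}{4}\,t_M\leq R^2+2J_T(u_{T,*})\leq R^2+\frac{T}{\theta}\theta\rho_M^2+2K_2 .
\]
This alone is not uniform in $T$, since the term $T\rho_M^2$ remains. To remove it, replace $2J_T$ by $2\int_0^{t_M}\ell$ and use the dynamic programming principle: the restriction of $u_{T,*}$ to $(t_M,T)$ is optimal for $(OCP)_{T-t_M}$ started from $y_{T,*}(t_M)$ with target shifted by $t_M$. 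Applying Lemma~\ref{upper_bound_value} to the whole problem and a lower bound on the tail, namely $\inf J_{T-t_M}\geq \frac{T-t_M}{\theta}\inf J_\theta-C_5$, obtained via Proposition~\ref{local_turnpike} and Lemma~\ref{lemma_estimate_functional} when $\|y_{T,*}(t_M)\|_{H_0^1}$ is small, we get $\int_0^{t_M}\ell\leq \frac{t_M}{\theta}\inf J_\theta+K_2+C_5\leq \frac{t_M\rho_M^2}{2}+C$. Choosing $\rho_M^2\leq \varepsilon_M^2/8$ then yields $t_M\leq \tau_M:=C(R^2+K_2+C_5)/\varepsilon_M^2$.

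The circularity (the tail estimate needs a good starting point) is handled by working with a time $\hat t_M\in E_M$ arbitrarily close to $t_M$, which exists by the definition \eqref{t_s_definition}. The points $t\in E_M$ carry $\|y_{T,*}(t)\|_{H_0^1}\leq\varepsilon_M/2$, and the time-shifted target still satisfies $\normC{y_d}\leq\rho_M\leq\varepsilon_M/2$, so \eqref{smallness_target_datum_varepsilon} holds there.

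\emph{Step 2: restart.} At $\hat t_M$, the tail $(y_{T,*},u_{T,*})|_{[\hat t_M,T]}$ is optimal for the shifted problem with datum $y_{T,*}(\hat t_M)$. Proposition~\ref{local_turnpike}, applied with the phase-shifted target (its constants are shift-invariant) and its uniqueness clause, identifies this tail with the unique optimal triple; the adjoint also coincides, since the adjoint is determined by the state. This gives \eqref{lemma_opt_est_eq2} with $t-\hat t_M$ in place of $t$, using periodicity of $(y_{\theta,*},\lambda_{\theta,*})$. If $T-\hat t_M$ is small the estimate is still fine, because the proposition holds for every horizon.

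\emph{Main obstacle.} The main difficulty is obtaining a $T$-independent bound on $\int_0^{t_M}\ell - \frac{t_M}{\theta}\inf J_\theta$, i.e.\ lower-bounding the tail cost by its periodic value up to a uniform constant. My first attempt is the argument above: pick $\hat t_M\in E_M$ just after $t_M$, apply Lemma~\ref{lemma_estimate_functional} to the tail turnpike from Proposition~\ref{local_turnpike} (so that $C_4$ depends only on $M,\mu,\theta$), and combine this with Lemma~\ref{upper_bound_value} on $[0,T]$. The resulting inequality, $\frac{\varepsilon_M^2}{4}\hat t_M\leq R^2+\hat t_M\rho_M^2+2K_2+2C_4(\cdots)+\theta\rho_M^2$, closes once $\rho_M$ is small, which gives $\tau_M$ independent of $T$. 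The case $E_M=\emptyset$, i.e.\ $t_M=T$, is excluded for $T\geq\tau_M$ by the plain estimate $\frac{\varepsilon_M^2}{4}T\leq R^2+T\rho_M^2+2K_2$.
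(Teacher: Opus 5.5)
Your proposal is correct and follows essentially the paper's argument. It combines the dissipation lower bound $\frac{\varepsilon_M^2}{8}t_M-\frac{R^2}{2}$ on $(0,t_M)$, the periodic-competitor upper bound of Lemma~\ref{upper_bound_value}, and a tail lower bound obtained by restarting at some $\hat t_M\in E_M$ just after $t_M$ via the Bellman principle, Proposition~\ref{local_turnpike} and Lemma~\ref{lemma_estimate_functional}, with the case $|E_M|=0$ excluded for large $T$ by the plain comparison. The one point you leave implicit is that the periodic optimal triple of the shifted problem is the shift of $(y_{\theta,*},u_{\theta,*},\lambda_{\theta,*})$; this follows from the shift invariance of the periodic cost and the uniqueness in Proposition~\ref{local_uniqueness}, which your choice $\rho_M\leq\varepsilon$ guarantees.
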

\begin{proof}
	Let $(y_{\theta,*},u_{\theta,*})$ be an optimal pair of $ (Per)_\theta $, extended by $ \theta $-periodicity over the whole real line. Throughout the proof we take
	\begin{equation}\label{rhoM}
		\rho_M:=\min\Big(\frac{1}{4\sqrt{2\theta}+4\sqrt{4\theta+2}}, \frac{\varepsilon_M}{2\sqrt2}\Big) , 
	\end{equation}
	so that, by \eqref{lemma_opt_est_eq3} below, $\frac1\theta\inf J_\theta\leq\frac{\varepsilon_M^2}{16}$. Indeed,
	\begin{equation}\label{lemma_opt_est_eq3}
		\frac{1}{2}\|u_{\theta,*}\|_{L^2(0,\theta;L^2(\omega))}^2 \leq \inf_{u\in L^2(0,\theta;L^2(\omega))}J_\theta= J_\theta(u_{\theta,*})\leq J_\theta(0)\leq \frac{\theta}{2} \normC{y_d}^2\leq\frac{\theta}{2}\rho_M^2\leq\frac{\theta\,\varepsilon_M^2}{16} .
	\end{equation}
	Moreover, since $\rho_M<\frac{1}{2\sqrt{2\theta}+2\sqrt{4\theta+2}}$, Lemma~\ref{upper_bound_value} and the optimality of $u_{T,*}$ give
	\begin{equation}\label{lemma_opt_est_eq8}
		J_{T}\left(u_{T,*}\right)\leq \frac{T}{\theta}\inf_{u\in L^2(0,\theta;L^2(\omega))}J_\theta+K_2 ,
	\end{equation}
	where $K_2=K_2(\theta,\rho_M,R)$ is given by \eqref{K2} (it is nondecreasing in $\|y_0\|_{L^2(0,1)}\leq R$).
	
	\smallskip
	
	\emph{Step 1: the absorbing set is nonempty for $T$ large.} Assume by contradiction that, for a.e.\ $t\in(0,T)$, one has $\|y_{T,*}(t)\|_{H_0^1(0,1)}>\varepsilon_M/2$. Integrating \eqref{diffineq} (with $u=u_{T,*}$) over $(0,T)$ gives $\int_0^T\|y_{T,*}\|^2_{H_0^1}\leq\|y_0\|^2_{L^2}+\int_0^T\|u_{T,*}\|^2_{L^2(\omega)}$, whence
	\begin{equation}\label{JTlower}
		J_T(u_{T,*}) \geq\frac{1}{2}\int_0^T \|y_{T,*}(t)\|_{H_0^1(0,1)}^2 \d t - \frac{R^2}{2} \geq \frac{T\varepsilon_M^2}{8} - \frac{R^2}{2} .
	\end{equation}
	Together with \eqref{lemma_opt_est_eq8} and $\frac1\theta\inf J_\theta\leq\frac{\varepsilon_M^2}{16}$, this yields $\frac{T\varepsilon_M^2}{16}\leq K_2+\frac{R^2}{2}$, which is impossible for $T>\frac{16K_2+8R^2}{\varepsilon_M^2}$. Hence the absorbing set has positive measure for such $T$, and by \eqref{t_s_definition} we may choose
$\hat t_M\in E_M\cap[t_M,t_M+1]$; in particular, $\|y_{T,*}(\hat t_M)\|_{H_0^1(0,1)}\leq \varepsilon_M/2$ and $\hat t_M\leq t_M+1$.
	
	\smallskip

	\emph{Step 2: turnpike on the tail interval.} By the Bellman optimality principle (see, e.g., \cite[Theorem~1.1 in Chapter 6]{LY}), the restriction $u_{T,*}|_{(\hat t_M,T)}$, shifted in time, is optimal for the problem $ (OCP)_{T-\hat t_M} $ with initial datum $y_{T,*}(\hat t_M)$ and tracking term $y_d(\cdot+\hat t_M)$. The shifted tracking term is $\theta$-periodic with $\normC{y_d(\cdot+\hat t_M)}=\normC{y_d}\leq\rho_M\leq\varepsilon_M/2$; using the definition of $\hat t_M$ this gives $\|y_{T,*}(\hat t_M)\|_{H_0^1(0,1)}+\normC{y_d(\cdot+\hat t_M)}\leq\varepsilon_M$, so that Proposition~\ref{local_turnpike} applies to the shifted problem. Its constants are unchanged by this phase shift, by the uniformity assertion in Proposition~\ref{local_turnpike}. Moreover, the cost of $(Per)_\theta$ is invariant under time shifts of the tracking term, and the shift $\big(y_{\theta,*}(\cdot+\hat t_M),u_{\theta,*}(\cdot+\hat t_M),\lambda_{\theta,*}(\cdot+\hat t_M)\big)$ of the (unique) optimal triple of $(Per)_\theta$ is an optimal triple for the shifted periodic problem; by uniqueness (Proposition~\ref{local_uniqueness}), it is \emph{the} optimal triple of the shifted periodic problem. Hence \eqref{turnpike}, applied to the shifted problems and written in the original time variable, gives precisely \eqref{lemma_opt_est_eq2}.
	
	\smallskip

	\emph{Step 3: the absorbing time is uniformly bounded.} By the definition \eqref{t_s_definition} of $t_M$, one has $\|y_{T,*}(t)\|_{H_0^1(0,1)}\geq \varepsilon_{M}/2$ for a.e.\ $t\in (0,t_M)$; hence, as in \eqref{JTlower},
	\begin{equation}\label{lemma_opt_est_eq11}
		\frac{1}{2}\int_0^{t_M}\|u_{T,*}\|_{L^2(\omega)}^2  \d t+\frac{1}{2}\int_{0}^{t_M}\|y_{T,*}-y_d\|_{L^2(0,1)}^2 \d t\geq \frac{t_M \varepsilon_{M}^2}{8}- \frac{R^2}{2}.
	\end{equation}
	On the other hand, the turnpike estimate \eqref{lemma_opt_est_eq2} allows us to apply Lemma~\ref{lemma_estimate_functional} on the tail interval, with horizon $T-\hat t_M$, control $u_{T,*}(\hat t_M+\cdot)$ and periodic control $\bar u:=u_{\theta,*}(\cdot+\hat t_M)$; recalling that $\|\bar u\|_{L^2(0,\theta;L^2(\omega))}=\|u_{\theta,*}\|_{L^2(0,\theta;L^2(\omega))}$ and $J_\theta^{shift}(\bar u)=\inf J_\theta$, and using \eqref{lemma_opt_est_eq3} and $\normC{y_d}\leq\rho_M\leq\frac{\varepsilon_M}{2}$, we get
	\begin{align}\label{lemma_opt_est_eq20}
		\frac{1}{2}\int_{\hat t_M}^{T}\|u_{T,*}\|_{L^2(\omega)}^2  \d t+\frac{1}{2}\int_{\hat t_M}^{T}\|y_{T,*}-y_d\|_{L^2(0,1)}^2 \d t
		&\geq\frac{T-\hat t_M}{\theta}\inf_{u\in L^2(0,\theta;L^2(\omega))}J_\theta- C_4\,\Theta_M ,
	\end{align}
	where $\Theta_M:=1+(1+\sqrt\theta)\frac{\varepsilon_M}{2}+\big(\frac38+\frac{\theta}{4}\big)\varepsilon_M^2$ and $C_4=C_4(M,\mu,\theta)$ is given by \eqref{C4}. Since the running cost is nonnegative, splitting $J_T(u_{T,*})$ on $(0,t_M)$, $(t_M,\hat t_M)$ and $(\hat t_M,T)$ and using \eqref{lemma_opt_est_eq11}, \eqref{lemma_opt_est_eq20}, $T-\hat t_M\geq T-t_M-1$ and $\frac{1}{\theta}\inf J_\theta\leq\frac{\varepsilon_M^2}{16}$,
	\begin{equation}\label{lemma_opt_est_eq24}
		J_{T}\left(u_{T,*}\right)\geq  \frac{t_M \varepsilon_{M}^2}{8}- \frac{R^2}{2}+\frac{T-t_M}{\theta}\inf_{u}J_\theta-\frac{\varepsilon_M^2}{16}-C_4\,\Theta_M .
	\end{equation}
	Comparing \eqref{lemma_opt_est_eq24} with \eqref{lemma_opt_est_eq8} and using once more $\frac{1}{\theta}\inf J_\theta\leq\frac{\varepsilon_M^2}{16}$, we obtain
	\begin{equation*}
		\frac{t_M\varepsilon_M^2}{16} \leq t_M\Big(\frac{\varepsilon_M^2}{8}-\frac{1}{\theta}\inf_u J_\theta\Big) \leq K_2+\frac{R^2}{2}+\frac{\varepsilon_M^2}{16}+C_4\,\Theta_M ,
	\end{equation*}
	whence $t_M\leq \frac{16 K_2+8R^2+16\,C_4\Theta_M}{\varepsilon_M^2}+1$. Setting
	\begin{equation}\label{tauM}
		\tau_{M}:= \frac{16 K_2+8R^2+16\,C_4\Theta_M}{\varepsilon_M^2}+2 ,
	\end{equation}
	we conclude that $t_M\leq\tau_M-1$, and hence $\hat t_M\leq t_M+1\leq\tau_M$, for every $T\geq\tau_M$ (note that $\tau_M>\frac{16K_2+8R^2}{\varepsilon_M^2}$, so that Step~1 applies). 
\end{proof}

We are now in a position to prove Theorem~\ref{main1}.
\begin{proof}[Proof of Theorem~\ref{main1}]
	Fix $M\in(0,M_0]$ (for instance $M=M_0$), let $\varepsilon_M$ be as in Proposition~\ref{local_turnpike}, and let $R:=1+\|y_0\|_{L^2(0,1)}$. Take $\rho:=\rho_M$ as in \eqref{rhoM} and $\tau:=\tau_M$ as in \eqref{tauM}; both are independent of $T$, and $\rho$ is independent of $y_0$. Since $\rho_M\leq\varepsilon_M\leq\varepsilon$ (see \eqref{choice_epsM} in Appendix~\ref{app:C}), the condition $\normC{y_d}\leq\rho$ ensures, by Proposition~\ref{local_uniqueness}, that $(Per)_\theta$ has a unique optimal pair $(y_{\theta,*},u_{\theta,*})$.
	Let now $T\geq\tau$ and let $(y_{T,*},u_{T,*},\lambda_{T,*})$ be any optimal triple of $(OCP)_T$. By Lemma~\ref{lemma_absorbing_time}, there exists $\hat t_M\in(0,\tau_M]$ such that \eqref{lemma_opt_est_eq2} holds. Hence, for a.e.\ $t\in[\tau,T]\subset[\hat t_M,T]$,
	\begin{equation*}
		\|y_{T,*}(t) - y_{\theta,*}(t)\|_{L^2(0,1)} + \|u_{T,*}(t) - u_{\theta,*}(t)\|_{L^2(\omega)}
		\leq Me^{\mu\hat t_M}e^{-\mu t}+Me^{-\mu(T-t)}
		\leq K\big(e^{-\mu t }+e^{-\mu(T-t)}\big),
	\end{equation*}
	with $K:=e^{\mu \tau_{M}} M$, which is independent of $T$. This is \eqref{turnpike1}, and the proof is complete.
\end{proof}


\begin{appendices}

\section{\texorpdfstring{Uniform slice bounds: proof of \eqref{uniform_bound_optima1}}{Uniform slice bounds}}\label{app:A}

Throughout this appendix we take $\bar T=4$ in \eqref{def:slices}. Thus $N_T=\lfloor T/4\rfloor$, the full slices are $I_i=(4(i-1),4i)$, $1\leq i\leq N_T$, and $I_{N_T+1}=(4N_T,T)$ is the possibly empty leftover slice.

For an interval $J\subset\mathbb R$, set
\begin{equation}\label{defY}
	\mathcal Y(J):=L^2(J;H^2(0,1)\cap H_0^1(0,1))\cap H^1(J;L^2(0,1)).
\end{equation}
In particular, $\mathcal Y(J)$ embeds continuously into $C(\overline J;H_0^1(0,1))$.

\begin{lemma}\label{exact_control}
Let $\varrho>0$. There exist constants $\varepsilon_c>0$ and $C_c>0$, depending only on $\varrho$ and $\omega$, such that, for any $s\geq0$ and any $(\hat y,\hat u)\in\mathcal Y((s,s+1))\times L^2(s,s+1;L^2(\omega))$ satisfying
	\begin{equation*}
		\partial_t\hat y-\partial_{xx}\hat y+\hat y\,\partial_x\hat y=\chi_\omega\hat u,\qquad
		\hat y|_{\{0,1\}}=0,\qquad
		\|\hat y\|_{\mathcal Y((s,s+1))}\leq\varrho ,
	\end{equation*}
	for any $z_s\in H_0^1(0,1)$ such that $\|z_s-\hat y(s)\|_{H_0^1(0,1)}\leq\varepsilon_c$, there exists $v\in L^2(s,s+1;L^2(\omega))$ such that the solution $z$ of
	$$
		\partial_tz-\partial_{xx}z+z\,\partial_xz=\chi_\omega(\hat u+v),\qquad
		z|_{\{0,1\}}=0,\qquad z(s)=z_s,
	$$
	satisfies $z(s+1)=\hat y(s+1)$ and 
	\begin{equation}\label{exact_control_cost}
		\|v\|_{L^2(s,s+1;L^2(\omega))}
		\leq C_c\|z_s-\hat y(s)\|_{H_0^1(0,1)} .
	\end{equation}
	The constants are unchanged under time translation.
\end{lemma}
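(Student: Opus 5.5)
The statement is a local exact controllability to trajectories result for the Burgers equation. I would prove it by linearization around $\hat y$ followed by a fixed point, following the Fursikov--Imanuvilov / Fern\'andez-Cara--Guerrero strategy.

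Write $z=\hat y+w$. Then $w$ solves
\[
\partial_t w-\partial_{xx}w+\partial_x(\hat y w)+w\,\partial_x w=\chi_\omega v,\qquad w(s)=w_s:=z_s-\hat y(s),
\]
and the goal becomes the null-controllability condition $w(s+1)=0$. By time translation we may take $s=0$; the equation for $w$ involves only $\hat y$, so the constants will be unchanged under translation.

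\emph{Step 1: the linear problem.} Treat the linearized equation
\[
\partial_t w-\partial_{xx}w+\partial_x(a w)=\chi_\omega v+f,\qquad a:=\hat y,
\]
with $\|a\|_{\mathcal Y((0,1))}\le\varrho$. Since $\mathcal Y\hookrightarrow C([0,1];H_0^1)\hookrightarrow L^\infty((0,1)^2)$ and $\partial_x a\in L^2(0,1;H^1)\subset L^2(0,1;L^\infty)$, the coefficients are regular enough for the classical global Carleman estimate for the adjoint operator $-\partial_t\varphi-\partial_{xx}\varphi-a\partial_x\varphi$. The Carleman constants depend only on $\|a\|_{L^\infty}$ and $\|a_x\|_{L^2L^\infty}$, hence only on $\varrho$ and $\omega$. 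From this I would derive null controllability in weighted spaces, in the Fursikov--Imanuvilov form: for $w_s\in H_0^1$ and any source $f$ in a weighted space $e^{C/(1-t)}f\in L^2L^2$, there is a control $v$ with $w(1)=0$ and
\[
\|e^{C/(1-t)}(w,v)\|\lesssim \|w_s\|_{H_0^1}+\|e^{C/(1-t)}f\|_{L^2L^2}.
\]
Parabolic regularity then places $w$ in the $\mathcal Y$-type space with the same weight. Using $H_0^1$ data gives $w\in\mathcal Y$, which is what the nonlinearity requires.

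\emph{Step 2: the nonlinear fixed point.} Define the map $\mathcal N$ by $\mathcal N(\tilde w):=w$, where $w$ is the controlled solution of the linear problem with source $f:=-\tilde w\,\partial_x\tilde w$. In one dimension,
\[
\|\tilde w\,\partial_x\tilde w\|_{L^2L^2}\le\|\tilde w\|_{L^\infty H^1}\|\tilde w\|_{L^2H^2}\le\|\tilde w\|_{\mathcal Y}^2 .
\]
The weights can be chosen so that the quadratic term maps into the source space; this is the standard device of using weights $\rho_1\le\rho_0^2$. The result is a contraction, or alternatively a Banach inverse-mapping/Schauder argument (Liusternik), on a small ball of radius $\sim C\|w_s\|_{H_0^1}$ whenever $\|w_s\|_{H_0^1}\le\varepsilon_c$. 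The fixed point satisfies $w(1)=0$ and $\|v\|\le C_c\|w_s\|_{H_0^1}$, which gives \eqref{exact_control_cost}.

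\emph{Main obstacle.} The delicate part is Step 1. One must make the Carleman constants depend only on $\varrho$ under the limited regularity of $a$ (merely $\mathcal Y$, not $L^\infty W^{1,\infty}$), and must match the weights so that the quadratic Burgers term closes in the fixed point. I would first try the approach of Fern\'andez-Cara--Guerrero--Imanuvilov--Puel, writing the convection in divergence form. That form lets the Carleman estimate for the adjoint use only $\|a\|_{L^\infty}$, with the derivative absorbed by integration by parts.
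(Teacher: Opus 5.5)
Your proposal is correct in outline, but it follows a different route from the paper's. You keep the coefficient fixed at $a=\hat y$ and treat $-w\,\partial_x w$ as a source term. This commits you to three things: null controllability with sources in weighted spaces, weighted $\mathcal Y$-regularity of the controlled state, and the weight matching $\rho_1\le\rho_0^2$ so that the quadratic term falls back into the source space. One small correction: the weighted $\mathcal Y$-bound generally comes with a slightly weaker weight than the $L^2$-bound, because multiplying by the weight produces a $\rho' w$ term. So ``the same weight'' in your Step 1 should be read loosely; the weight bookkeeping in Step 2 is precisely what absorbs this.

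The paper avoids weighted spaces altogether by absorbing the nonlinearity into the coefficient, using $\partial_x(\hat y w)+w\,\partial_x w=\partial_x\big((\hat y+\tfrac{w}{2})w\big)$. For a frozen $q$ it then only needs null controllability of the \emph{source-free} linear equation $\partial_t w-\partial_{xx}w+\partial_x(a_0w)=\chi_\omega v$, $w(0)=w_0$, where $a_0=\hat y+q/2$ ranges over the ball of radius $\varrho+1$ in $\mathcal Y((0,1))$. Fursikov--Imanuvilov supply a linear control operator $\mathcal C_{a_0}$ with $\mathcal Y$-bounds uniform on that ball. The map $a_0\mapsto\mathcal S(a_0,w_0)$ is continuous with relatively compact range, so Schauder's theorem on the unit ball of $\mathcal Y$ gives the fixed point, with $\varepsilon_c=C_{\varrho+1}^{-1}$ and $C_c=C_{\varrho+1}$.

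The trade-off is as follows. The paper's route needs only unweighted estimates. In exchange it requires continuity and compactness with respect to the coefficient, which in turn needs a canonical choice of control, and it yields existence only. Your route needs the heavier weighted Carleman machinery, but only uniformity of the constants over the $\varrho$-ball. It then closes by a contraction or inverse-mapping argument, with no compactness, and the resulting control depends Lipschitz-continuously on $z_s-\hat y(s)$.
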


\begin{proof}
	We give a proof for completeness.
	Choose a nonempty closed interval $\omega'\Subset\omega$; controls supported in $\omega'$ are identified with elements of $L^2(\omega)$. By a time translation it suffices to consider $s=0$. We use the argument of \cite[equations~(1.2)--(1.4) and proof of the Main Theorem]{SA}, based on \cite[Theorem~4.3]{FI95}. Write $w=z-\hat y$ and $w_0=z_s-\hat y(0)$. We have to solve
	\begin{equation}\label{wbridge}
		\partial_tw-\partial_{xx}w+\partial_x\left(\left(\hat y+\frac{w}{2}\right)w\right)
		=\chi_\omega v,\qquad w(0)=w_0,\qquad w(1)=0 .
	\end{equation}
	For every $a_0\in\mathcal Y((0,1))$, the cited linear controllability result furnishes a bounded linear operator $	\mathcal C_{a_0}:H_0^1(0,1)\longrightarrow L^2(0,1;L^2(\omega))$ such that the solution $\mathcal S(a_0,w_0)\in\mathcal Y((0,1))$ of $\partial_tw-\partial_{xx}w+\partial_x(a_0w) = \chi_\omega\mathcal C_{a_0}w_0$, $w(0)=w_0$, vanishes at $t=1$. Moreover, for every $R>0$ there is $C_R>0$ such that
	\begin{equation}\label{linear_FI_bound}
		\|\mathcal S(a_0,w_0)\|_{\mathcal Y((0,1))} + \|\mathcal C_{a_0}w_0\|_{L^2(0,1;L^2(\omega))} \leq C_R\|w_0\|_{H_0^1}
	\end{equation}
	whenever $\|a_0\|_{\mathcal Y((0,1))}\leq R$. On bounded subsets, the map $a_0\mapsto\mathcal S(a_0,w_0)$ is continuous and has relatively compact image in $\mathcal Y((0,1))$ (see the proof cited above). For $q$ in the closed unit ball of $\mathcal Y((0,1))$, define $\Phi(q):=\mathcal S(\hat y+\frac q2,w_0)$. The coefficients $\hat y+q/2$ remain in the ball of radius $\varrho+1$ in $\mathcal Y((0,1))$. Hence $\Phi$ is continuous and compact and, by \eqref{linear_FI_bound},
	$\|\Phi(q)\|_{\mathcal Y((0,1))}\leq C_{\varrho+1}\|w_0\|_{H_0^1}$.
	If $\|w_0\|_{H_0^1}\leq C_{\varrho+1}^{-1}$, Schauder's theorem gives a fixed point $w=\Phi(w)$ in that ball. It solves \eqref{wbridge} with
	$v=\mathcal C_{\hat y+w/2}w_0$, and \eqref{linear_FI_bound} yields \eqref{exact_control_cost}. We may therefore take $\varepsilon_c=C_{\varrho+1}^{-1}$ and $C_c=C_{\varrho+1}$.
\end{proof}

We next give the two regularization estimates used to place the reference trajectory in the space required by Lemma~\ref{exact_control} and to regularize the freely evolving competitor before the bridge.

\begin{lemma}[Strong regularity and free smoothing]\label{lem:local_smoothing}
	Let $s\geq0$, $z_s\in H_0^1(0,1)$ and $g\in L^2(s,s+1;L^2(0,1))$. Let $z$ solve
	\begin{equation}\label{forced_burgers_reg}
		\partial_tz-\partial_{xx}z+z\partial_xz=g,\qquad
		z|_{\{0,1\}}=0,\qquad z(s)=z_s .
	\end{equation}
	Set $E_s:=\|\partial_xz_s\|_{L^2}^2+2\|g\|_{L^2(s,s+1;L^2)}^2$.
	If $E_s\leq1/8$, then $z\in\mathcal Y((s,s+1))$ and
	\begin{equation}\label{strong_reg}
		\|z\|_{\mathcal Y((s,s+1))}
		\leq C_{\mathrm{reg}}\big(\|z_s\|_{H_0^1}+\|g\|_{L^2(s,s+1;L^2)}\big),
	\end{equation}
	for a universal constant $C_{\mathrm{reg}}>0$.
	
	Moreover, let $\xi\in L^2(0,1)$ with $\|\xi\|_{L^2}\leq\pi/2$, and let $z$ be the uncontrolled solution starting from $\xi$ at time $r$. Then
	\begin{equation}\label{free_smoothing}
		\|z(t)\|_{H_0^1}\leq\|\xi\|_{L^2}\quad\forall t\geq r+\frac12,
		\qquad
		\int_r^\infty\|z(t)\|_{L^2}^2 \d t\leq\frac{1}{2\pi^2}\|\xi\|_{L^2}^2 .
	\end{equation}
\end{lemma}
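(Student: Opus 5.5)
The plan is to treat the two assertions separately, using the two classical energy identities for Burgers with Dirichlet conditions. In both we work with Galerkin approximations, or with smooth data followed by density, so that the formal multiplications are justified.

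\emph{Strong regularity \eqref{strong_reg}.} We multiply \eqref{forced_burgers_reg} by $-\partial_{xx}z$ and estimate exactly as in the proof of \eqref{y_p_H01}. The Agmon and Poincar\'e inequalities give $\|z\|_{L^\infty}^2\le \frac{2}{\pi}\|\partial_xz\|^2_{L^2}$, and this leads to
\begin{equation*}
\frac{\d}{\d t}\|\partial_xz\|^2_{L^2}+\|\partial_{xx}z\|^2_{L^2}\le \frac{4}{\pi}\|\partial_xz\|^4_{L^2}+2\|g\|^2_{L^2}.
\end{equation*}
Set $\phi(t):=\|\partial_xz(t)\|^2_{L^2}$. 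A continuity (bootstrap) argument then runs as follows. As long as $\phi\le 2E_s\le 1/4$, we have $\frac4\pi\phi^2\le\frac1\pi\phi$, which is absorbed using $\|\partial_{xx}z\|^2\ge\pi^2\phi$. Integration on $(s,t)$ then gives $\phi(t)\le\|\partial_xz_s\|^2+2\|g\|^2_{L^2L^2}=E_s$. Hence the bound $\phi\le 2E_s$ never saturates, so $\sup_{[s,s+1]}\phi\le E_s$, and we also get $\int_s^{s+1}\|\partial_{xx}z\|^2\lesssim E_s$.

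The time derivative is read off from the equation: $\partial_tz=\partial_{xx}z-z\partial_xz+g$. Here $\|z\partial_xz\|_{L^2}\le\|z\|_{L^\infty}\|\partial_xz\|_{L^2}\lesssim\phi$, which is bounded. This yields $\|z\|_{\mathcal Y}\le C(\sqrt{E_s}+E_s)$, and since $E_s\le 1/8$ this is at most $C\sqrt{E_s}\le C_{\mathrm{reg}}(\|z_s\|_{H_0^1}+\|g\|)$. The only delicate point is choosing the bootstrap threshold so that the quartic term is dominated; the hypothesis $E_s\le 1/8$ is exactly what allows this.

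\emph{Free smoothing \eqref{free_smoothing}.} With $g=0$, the $L^2$ energy identity (as in Lemma~\ref{energyT}) gives $\frac{\d}{\d t}\|z\|^2+2\|\partial_xz\|^2=0$. Two consequences follow. First, $\|z(t)\|^2\le e^{-2\pi^2(t-r)}\|\xi\|^2$, so $\int_r^\infty\|z\|^2\le\frac{1}{2\pi^2}\|\xi\|^2$. Second, $\int_r^{\infty}\|\partial_xz\|^2\le\frac12\|\xi\|^2$.

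For the $H_0^1$ bound, the $H_0^1$ inequality above with $g=0$ gives $\phi'\le\frac4\pi\phi^2-\pi^2\phi$ whenever $\phi\le\pi^3/4$, and hence $\phi'\le0$ there. The same inequality holds in general as $\phi'\le\frac4\pi\phi\cdot\phi$, and we use it in Gronwall form. Since $\int_r^\infty\phi\le\frac12\|\xi\|^2\le\pi^2/8$, we have $\exp(\frac4\pi\int\phi)\le e^{\pi/2}$. Averaging over $\sigma\in(r,r+\frac12)$ in the estimate $\phi(t)\le e^{\frac4\pi\int_\sigma^t\phi}\phi(\sigma)$ then gives $\phi(t)\le 2e^{\pi/2}\int_r^{r+1/2}\phi\le e^{\pi/2}\|\xi\|^2$ for $t\ge r+\frac12$.

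This constant is too large. To sharpen it, we would instead use the exact dissipation $\phi'\le-\pi^2\phi+\frac4\pi\phi^2$ once $\phi$ is small, which adds an exponential factor $e^{-\pi^2(t-\sigma)}$. Alternatively, we average $\phi(\sigma)$ only over a subinterval where $\phi$ is small, combining the two facts: the average of $\phi$ over $(r,r+\frac12)$ is at most $\|\xi\|^2$, and $\phi(\sigma)\le\|\xi\|^2$ at some $\sigma\le r+\frac12$. From such a point $\sigma$, monotonicity of $\phi$ holds as long as $\phi\le\pi^3/4$, and indeed $\|\xi\|^2\le\pi^2/4<\pi^3/4$. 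So the cleanest route is this: pick $\sigma\in(r,r+\frac12)$ with $\phi(\sigma)\le\|\xi\|^2$, which exists by the averaging bound; then show $\phi$ is nonincreasing after $\sigma$ because $\phi$ stays below $\pi^3/4$. That gives $\|z(t)\|_{H_0^1}\le\|\xi\|_{L^2}$ for all $t\ge r+\frac12$. Verifying this monotonicity region, together with the precise constant in the quartic term, is where the main care is needed.
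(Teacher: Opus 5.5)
Your proposal is correct and follows essentially the paper's own proof. For the strong regularity bound you use the same $-\partial_{xx}z$ test and continuity argument (you absorb the quartic term via Poincar\'e to get $\sup\phi\le E_s$, while the paper bounds it by $E_s+8E_s^2\le 2E_s$); for the free smoothing estimates you make the same choice of $\sigma\in(r,r+\frac12)$ with $\|\partial_x z(\sigma)\|_{L^2}^2\le\|\xi\|_{L^2}^2$, followed by monotonicity of $\phi$ because $\phi(\sigma)\le\pi^2/4$ lies below the threshold, and the intermediate Gr\"onwall detour giving the factor $e^{\pi/2}$ is unnecessary and can simply be dropped.
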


\begin{proof}
	The calculations can be justified by a Galerkin approximation. Set $q(t):=\|\partial_xz(t)\|_{L^2}^2$. Testing \eqref{forced_burgers_reg} by $-\partial_{xx}z$ and using $\|z\|_{L^\infty}\leq\|\partial_xz\|_{L^2}$ gives
	\begin{equation}\label{qstrong}
		q'(t)+\|\partial_{xx}z(t)\|_{L^2}^2 \leq 2q(t)^2+2\|g(t)\|_{L^2}^2 .
	\end{equation}
	A continuity argument yields
	\begin{equation}\label{qbootstrap}
		\sup_{t\in[s,s+1]}q(t)\leq2E_s,\qquad
		\int_s^{s+1}\|\partial_{xx}z(t)\|_{L^2}^2 \d t\leq2E_s .
	\end{equation}
	Indeed, as long as $q\leq2E_s$, integration of \eqref{qstrong} gives $q(t)\leq E_s+8E_s^2\leq2E_s$; the integral estimate follows in the same way. Finally,
	$$
		\partial_tz=\partial_{xx}z-z\partial_xz+g,\qquad
		\|z\partial_xz\|_{L^2}^2\leq\|\partial_xz\|_{L^2}^4=q(t)^2.
	$$
	Thus \eqref{qbootstrap}, Poincar\'e's inequality and the equivalence of the $H^2\cap H_0^1$ norm with $\|\partial_{xx}z\|_{L^2}$ prove \eqref{strong_reg}.
	
	For the free solution, the exact energy identity
	\begin{equation}\label{free_exact_energy}
		\frac12\frac{\d}{\d t}\|z(t)\|_{L^2}^2+\|\partial_xz(t)\|_{L^2}^2=0
	\end{equation}
	gives the integral estimate in \eqref{free_smoothing}. It also provides a time $\sigma\in(r,r+1/2)$ such that
	$\|\partial_xz(\sigma)\|_{L^2}^2\leq\|\xi\|_{L^2}^2$.
	For $t>\sigma$, testing by $-\partial_{xx}z$ and using
	$\|\partial_xz\|_{L^2}\leq\pi^{-1}\|\partial_{xx}z\|_{L^2}$ gives
	$\frac12q'(t)+\|\partial_{xx}z\|_{L^2}^2 \leq \frac{\sqrt{q(t)}}{\pi}\|\partial_{xx}z\|_{L^2}^2$.
	Since $\sqrt{q(\sigma)}\leq\|\xi\|_{L^2}\leq\pi/2$, a continuity argument shows that $q$ is nonincreasing on $(\sigma,\infty)$, proving the first estimate in \eqref{free_smoothing}.
\end{proof}

Set $M_b:=48$ and $A_b:=1+M_b$ (these constants, as well as the constants introduced in the proof of Lemma~\ref{prop4.1} below, are local to this appendix).
Let $\varepsilon_c:=\varepsilon_c(1)$ and $C_c:=C_c(1)$ be given by Lemma~\ref{exact_control}, set $C_m:=\sqrt{A_b}+\sqrt{2M_b}$, and fix $D_*\in(0,1]$ so small that
\begin{equation}\label{Dstar}
		A_bD_*\leq\frac{\pi^2}{4},\quad 4M_bD_*\leq\frac18,\quad
		C_{\mathrm{reg}}\big(\sqrt{2M_b}+\sqrt{M_b}\big)\sqrt{D_*}\leq1,\quad
		C_m\sqrt{D_*}\leq\varepsilon_c.
\end{equation}
We take $r_{\mathrm{ub}}:=\sqrt{D_*}$.

\begin{lemma}\label{prop4.1}
	There exists $K_{\mathrm{ub}}>0$, depending only on $\omega$ and independent of $T$, such that, for all $T>0$, $y_0\in L^2(0,1)$ and $\theta$-periodic tracking terms $y_d\in C([0,\infty);L^2(0,1))$ satisfying $\|y_0\|_{L^2(0,1)}+\normC{y_d}\leq r_{\mathrm{ub}}$, every optimal pair $(y_{T,*},u_{T,*})$ of $(OCP)_T$ satisfies
	$$
		\max_{1\leq i\leq N_T+1}\int_{I_i} \big(\|u_{T,*}(t)\|_{L^2(\omega)}^2+\|y_{T,*}(t)\|_{L^2(0,1)}^2\big)\d t
		\leq K_{\mathrm{ub}}\big(\|y_0\|_{L^2(0,1)}^2+\normC{y_d}^2\big).
	$$
\end{lemma}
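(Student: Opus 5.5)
We argue by a slice-by-slice competitor construction exploiting dynamic programming. Write $\delta^2:=\|y_0\|^2_{L^2}+\normC{y_d}^2\leq D_*$. Fix a slice $I_i=(a,a+4)$ (or the leftover slice, handled identically with fewer steps). Arguing by Bellman's principle, $u_{T,*}|_{(a,T)}$ is optimal for the tail problem started at $y_{T,*}(a)$; so it suffices to produce, for the tail problem, a competitor whose cost is at most $\frac{M_b}{2}\delta^2$ plus the cost of $u_{T,*}$ on $(a+4,T)$. Subtracting then yields $\int_{I_i}(\|u_{T,*}\|^2+\|y_{T,*}-y_d\|^2)\leq M_b\delta^2$. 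With $\|y_{T,*}\|^2\leq 2\|y_{T,*}-y_d\|^2+2\|y_d\|^2$, this gives the claim with $K_{\mathrm{ub}}$ of order $2M_b+8$.

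The competitor is built as follows. We start from $\xi:=y_{T,*}(a)$ and apply the null control on $(a,a+2)$. We then use the exact control of Lemma~\ref{exact_control} on $(a+2,a+3)$ to steer to $y_{T,*}(a+3)$. Finally we copy $u_{T,*}$ on $(a+3,T)$, so the competitor coincides with $y_{T,*}$ after $a+3$. The comparison therefore only involves $(a,a+3)$ for the competitor versus $(a,a+4)$ for the optimum (we discard the optimum's nonnegative cost on $(a+3,a+4)$, or rather compare on $(a,a+3)$ and let the window be $(a,a+4)$ for slack in choosing the target time).

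We need two smallness facts, which we prove by induction over slices.
\begin{itemize}
\item[(a)] $\|\xi\|_{L^2}^2\leq A_b\delta^2$. For $i=1$ this follows from $\|y_0\|\leq\delta$. For $i>1$ it comes from the previous slice bound together with Lemma~\ref{energyT} and an averaging choice. This is where the induction hypothesis $M_b\delta^2$ enters, and $A_b=1+M_b$ is the bookkeeping.
\item[(b)] The reference $y_{T,*}$ on $(a+2,a+3)$ lies in $\mathcal Y$ with norm $\leq1$. To get this, we choose $s\in(a+1,a+2)$ where $\|y_{T,*}(s)\|_{H_0^1}^2$ is at most the average, which is bounded via Lemma~\ref{energyT} by $C\delta^2$. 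We then apply \eqref{strong_reg} with $g=\chi_\omega u_{T,*}$, using $E_s\leq 4M_bD_*\leq1/8$ and the third condition in \eqref{Dstar}.
\end{itemize}

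For the free part, \eqref{free_smoothing} applies since $\|\xi\|\leq\sqrt{A_bD_*}\leq\pi/2$. It gives a running cost $\leq \frac{1}{\pi^2}\|\xi\|^2+2\cdot2\normC{y_d}^2\lesssim\delta^2$ and an $H_0^1$ bound $\|z(a+2)\|_{H_0^1}\leq\|\xi\|$. Hence $\|z(a+2)-y_{T,*}(a+2)\|_{H_0^1}\leq C_m\delta\leq\varepsilon_c$. Lemma~\ref{exact_control} then gives a bridge control of cost $\lesssim C_c^2C_m^2\delta^2$. Along the bridge, Lemma~\ref{energyT} bounds the state cost by the same order.

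The main obstacle is closing the induction with the fixed constant $M_b=48$. We must check that the summed competitor cost on $(a,a+3)$, namely $\frac1{2\pi^2}A_b\delta^2+3\normC{y_d}^2+O(C_c^2C_m^2\delta^2)$, is $\leq\frac{M_b}{2}\delta^2$. The bridge term's constant depends on $\omega$ and not on $\delta$. So if the arithmetic fails with $M_b=48$, I would instead restructure so that $K_{\mathrm{ub}}$ absorbs the bridge constant. Concretely, I would use the uniform-in-time bound on $\|\xi\|$ from Lemma~\ref{lem_y_upper_bound_by_sub_inter} rather than the induction, and get the $H_0^1$ reference estimate from the cost functional directly (the optimal cost on $(a,T)$ is bounded by the free-evolution competitor plus $T$-proportional $y_d$ terms, balanced against the same terms for the optimum).
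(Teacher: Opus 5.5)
Your proposal has a genuine gap in fact (b): the bridge window lies inside the slice you are trying to estimate, so the argument is circular.

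Write $D:=\|y_0\|^2_{L^2}+\normC{y_d}^2$ (your $\delta^2$) and $f:=\|u_{T,*}\|^2_{L^2(\omega)}+\|y_{T,*}\|^2_{L^2(0,1)}$. To invoke Lemma~\ref{exact_control} with $\hat y=y_{T,*}$ on $(s,s+1)\subset(a+1,a+3)$ you need $\|y_{T,*}\|_{\mathcal Y((s,s+1))}\leq1$. Through \eqref{strong_reg}, this requires a priori bounds $\|y_{T,*}(s)\|^2_{H_0^1}=O(D)$ and $\|u_{T,*}\|^2_{L^2(s,s+1;L^2(\omega))}=O(D)$. The bridge cost $\|v\|\leq C_c\|z(s)-y_{T,*}(s)\|_{H_0^1}$ also needs the first of these. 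But the energy estimate only gives $\int_a^{a+2}\|y_{T,*}\|^2_{H_0^1}\leq\|y_{T,*}(a)\|^2_{L^2}+\|u_{T,*}\|^2_{L^2(a,a+2;L^2(\omega))}$, and every control norm involved lives in $I_i$ itself. Your forward induction only provides information on $I_{i-1}$, which suffices for (a) but not for (b). Likewise, on the bridge window the competitor's cost and state involve $u_{T,*}|_{(a+2,a+3)}$, so the comparison does not bound $\int_{I_i}f$ by known quantities.

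Your fallback does not repair this. Lemma~\ref{lem_y_upper_bound_by_sub_inter} controls $\|y_{T,*}\|_{C([0,T];L^2)}$ by exactly the slice norms $\sup_i\|u_{T,*}\|_{L^2(I_i;L^2(\omega))}$ you want to bound. The global bound $J_T(u_{T,*})\leq J_T(0)\leq\frac{1}{2\pi^2}\|y_0\|^2_{L^2}+T\normC{y_d}^2$ grows linearly in $T$ and does not prevent the cost from concentrating on a single slice.

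The missing idea is a non-inductive good/bad dichotomy that puts both ends of each comparison where smallness is known in advance. Call a full slice good if $\int_{I_i}f\leq M_bD$, and group the bad slices into maximal clusters $[a,b]$. Start the free evolution at a time $t_1$ in the last unit of the good slice preceding the cluster (or at $t_1=0$), where $f(t_1)\leq M_bD$. Place the bridge window $(s,s+1)$ in the good slice following the cluster, where $\|y_{T,*}(s)\|^2_{H_0^1}\leq2M_bD$ and $\|u_{T,*}\|^2_{L^2(s,s+1;L^2(\omega))}\leq M_bD$ hold, so Lemmas~\ref{lem:local_smoothing} and~\ref{exact_control} apply.

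Since the window $(t_1,t_2)$ now has unknown length, the tracking term enters the upper bound as $\int_{t_1}^{t_2}f\leq C_1D+6(t_2-t_1)\normC{y_d}^2$. Meanwhile the bad slices force $\int_{t_1}^{t_2}f>12\big((t_2-t_1)-4\big)D$. As $\normC{y_d}^2\leq D$, this bounds the cluster length and then its cost by $K_ID$. This is where $M_b=48$ matters ($M_b/4=12>6$). Otherwise $M_b$ is only a threshold, and the output constants $K_I,K_{II},K_{III}$ may exceed it, which removes the constant-propagation problem you were worried about.

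A cluster reaching the last full slice, the leftover slice, and the case $T\leq8$ need no bridge: switch the control off and use the free-endpoint optimality of the tail.
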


\begin{proof}
	Let $D:=\|y_0\|_{L^2(0,1)}^2+\normC{y_d}^2\leq D_*$, $f(t):=\|u_{T,*}(t)\|_{L^2(\omega)}^2+\|y_{T,*}(t)\|_{L^2(0,1)}^2$ and $\delta:=\normC{y_d}$.
	If $D=0$, the null pair has zero cost and the conclusion is immediate. We henceforth assume $D>0$.
	
	We use two elementary observations. First, if an interval $J$ satisfies $\int_Jf\leq M_bD$, then every unit subinterval of $J$ contains a Lebesgue point $t$ such that $f(t)\leq M_bD$. Second, for $0\leq t_1<t_2\leq T$, the restriction $u_{T,*}|_{(t_1,t_2)}$ minimizes
	$$
		J_{t_1,t_2}(u):=\frac12\int_{t_1}^{t_2}\big(\|u(t)\|_{L^2(\omega)}^2+\|y(t)-y_d(t)\|_{L^2(0,1)}^2\big)\d t
	$$
	among controls whose state starts from $y_{T,*}(t_1)$ and reaches $y_{T,*}(t_2)$ at time $t_2$. Otherwise, a cheaper control could be spliced into $u_{T,*}$. When $t_2=T$, the terminal constraint can be dropped: $u_{T,*}|_{(t_1,T)}$ minimizes $J_{t_1,T}$ among \emph{all} controls whose state starts from $y_{T,*}(t_1)$, for the same splicing reason; this free-endpoint version is the one used in Case~II and for the leftover slice below. We shall also use
	\begin{equation}\label{conversion_appendix}
		\int_{t_1}^{t_2}f(t) \d t
		\leq4J_{t_1,t_2}(u_{T,*}|_{(t_1,t_2)})+2(t_2-t_1)\delta^2 .
	\end{equation}
	
	\emph{Short horizons.}
	Let $z$ be the uncontrolled solution starting from $y_0$. By optimality, \eqref{free_exact_energy} and $\frac12\|z-y_d\|^2\leq\|z\|^2+\|y_d\|^2$, we have $J_T(u_{T,*})\leq J_T(0)\leq\frac{1}{2\pi^2}\|y_0\|_{L^2}^2+T\delta^2$. Hence, if $T\leq8$,
	\begin{equation*}
		\int_0^Tf\leq4J_T(u_{T,*})+2T\delta^2 \leq \Big(\frac{2}{\pi^2}+48\Big)D.
	\end{equation*}
	We assume from now on that $T>8$.
	
	\smallskip
	\emph{Good and bad slices.}
	A full slice $I_i$, $1\leq i\leq N_T$, is called good if $\int_{I_i}f\leq M_bD$, and bad otherwise. The union of the bad slices is a finite disjoint union of maximal clusters $[a_j,b_j]$, each made of consecutive full slices. Fix one such cluster and write it as $[a,b]$. If $a=0$, set $t_1=0$. If $a>0$, the preceding slice $[a-4,a]$ is good, and we choose $t_1\in(a-1,a)$ such that $f(t_1)\leq M_bD$.
	In both cases,
	\begin{equation}\label{yt1bound}
		\|y_{T,*}(t_1)\|_{L^2}^2\leq A_bD .
	\end{equation}
	
	\emph{Case I: the cluster is followed by a full good slice.}
	Assume $b<4N_T$. Then $[b,b+4]$ is good. Choose $t_c\in(b,b+1)$ such that $f(t_c)\leq M_bD$. The energy estimate \eqref{diffineq} gives the two bounds
	\begin{equation*}
		\sup_{t\in[t_c,b+3]}\|y_{T,*}(t)\|_{L^2}^2\leq2M_bD,\qquad
		\int_{t_c}^{b+3}\|\partial_xy_{T,*}(t)\|_{L^2}^2 \d t\leq2M_bD .
	\end{equation*}
	We may therefore choose a Lebesgue time $s\in(b+1,b+2)$ such that
	\begin{equation}\label{sample_H1}
		\|y_{T,*}(s)\|_{H_0^1}^2\leq2M_bD .
	\end{equation}
	Moreover,
	\begin{equation}\label{controlwindow}
		\|u_{T,*}\|_{L^2(s,s+1;L^2(\omega))}^2\leq M_bD .
	\end{equation}
	Lemma~\ref{lem:local_smoothing} and \eqref{Dstar} imply $\|y_{T,*}\|_{\mathcal Y((s,s+1))}
		\leq C_{\mathrm{reg}}\big(\sqrt{2M_b}+\sqrt{M_b}\big)\sqrt D\leq1$.
	
	Let $z$ be the uncontrolled solution starting from $y_{T,*}(t_1)$ at time $t_1$. Since the cluster contains at least one full slice and $s>b+1$, we have $s-t_1>1/2$. By \eqref{yt1bound}, \eqref{Dstar} and \eqref{free_smoothing}, we have $\|z(s)\|_{H_0^1}\leq\|y_{T,*}(t_1)\|_{L^2}\leq\sqrt{A_bD}$.
	Together with \eqref{sample_H1}, this yields $\|z(s)-y_{T,*}(s)\|_{H_0^1} \leq C_m\sqrt D\leq\varepsilon_c$.
	Lemma~\ref{exact_control}, applied with $\hat y=y_{T,*}|_{(s,s+1)}$ and $\hat u=u_{T,*}|_{(s,s+1)}$, furnishes $v\in L^2(s,s+1;L^2(\omega))$ such that the solution starting from $z(s)$ under the control $u_{T,*}+v$ reaches $y_{T,*}(s+1)$ and
	\begin{equation}\label{vbridge}
		\|v\|_{L^2(s,s+1;L^2(\omega))}\leq C_cC_m\sqrt D .
	\end{equation}
	Set $t_2:=s+1$ and define on $(t_1,t_2)$ the competitor $\tilde u(t):=0$ for $t\in(t_1,s)$ and $\tilde u(t):=u_{T,*}(t)+v(t)$ for $t\in(s,t_2)$. Its state $\tilde y$ reaches $y_{T,*}(t_2)$ at time $t_2$.
	
	Set $C_v:=C_cC_m$, $C_u:=2M_b+2C_v^2$, $C_y:=\frac{A_b}{2\pi^2}+A_b+C_u$, $C_0:=\frac{C_u}{2}+C_y$, $C_1:=4C_0$.
	By \eqref{controlwindow}, \eqref{vbridge}, the free decay on $(t_1,s)$ and the basic energy estimate on $(s,t_2)$, we have $\|\tilde u\|_{L^2(t_1,t_2;L^2(\omega))}^2\leq C_uD$ and $\int_{t_1}^{t_2}\|\tilde y(t)\|_{L^2}^2\d t\leq C_yD$.
	Consequently $J_{t_1,t_2}(\tilde u)\leq C_0D+(t_2-t_1)\delta^2$, and the exchange property and \eqref{conversion_appendix} give
	\begin{equation}\label{caseIupper}
		\int_{t_1}^{t_2}f(t) \d t\leq C_1D+6(t_2-t_1)\delta^2 .
	\end{equation}
	
	The cluster contains $(b-a)/4$ bad slices, while $t_1\in(a-1,a)$ (or $t_1=a=0$) and $t_2\in(b+2,b+3)$; hence $b-a\geq(t_2-t_1)-4$. It follows that
	\begin{equation}\label{caseIlower}
		\int_{t_1}^{t_2}f(t)\d t\geq\int_a^bf(t)\d t 	>\frac{b-a}{4}M_bD \geq 12 ((t_2-t_1)-4)D .
	\end{equation}
	Comparing \eqref{caseIupper} and \eqref{caseIlower} gives $t_2-t_1\leq\ell_I:=\frac{C_1+48}{6}$ and $\int_a^bf(t) \d t\leq K_ID$ with $K_I:=C_1+6\ell_I$.
	
	\smallskip
	\emph{Case II: the cluster reaches the final full slice.}
	Assume $b=4N_T$. Switch off the control after $t_1$, and denote the resulting free state by $\tilde y$. The free-endpoint version of the exchange property and \eqref{yt1bound} give $J_{t_1,T}(u_{T,*}|_{(t_1,T)}) \leq\frac{A_b}{2\pi^2}D+(T-t_1)\delta^2$.
	Thus, with $C_{II}:=2A_b/\pi^2$, the conversion estimate gives
	\begin{equation}\label{caseIIupper}
		\int_{t_1}^Tf(t) \d t\leq C_{II} D + 6(T-t_1)\delta^2.
	\end{equation}
	Since $0\leq T-b<4$ and $0\leq a-t_1<1$, one has $b-a\geq(T-t_1)-5$. Therefore $\int_{t_1}^Tf(t) \d t\geq\int_a^bf(t) \d t > 12 ((T-t_1)-5)D$.
	Comparison with \eqref{caseIIupper} yields $T-t_1\leq\ell_{II}:=\frac{C_{II}+60}{6}$ and $\int_a^Tf(t) \d t\leq K_{II}D,\quad K_{II}:=C_{II}+6\ell_{II}$.
	This also covers the leftover slice.
	
	\smallskip
	\emph{The leftover slice after a final good slice.}
	If $I_{N_T}$ is good and $I_{N_T+1}\neq\emptyset$, choose $t_1\in(4N_T-1,4N_T)$ with $f(t_1)\leq M_bD$. Switching off the control after $t_1$ and using $T-t_1<5$ gives $\int_{I_{N_T+1}}f(t) \d t \leq \big(\frac{2M_b}{\pi^2}+30\big)D=:K_{III}D$.

	The conclusion follows with $K_{\mathrm{ub}}:=\max\big(\frac{2}{\pi^2}+48, M_b, K_I, K_{II}, K_{III}\big)$.
\end{proof}

\section{Proof of Lemma~\ref{lem:convexity}}\label{app:B}
Throughout this appendix we set $\bar T=1$ in \eqref{def:slices}, and $K_{\mathrm{sl}}=K_{\mathrm{sl}}(1)$, $K_1=K_1(1)$ in \eqref{def:K} and Lemma~\ref{regular_lambda}.

\medskip
\noindent\emph{Proof of (i).} Define
\begin{equation}\label{r0def}
	r_0:=\min\Big(\frac{1}{8\sqrt2 K_{\mathrm{sl}}}, \frac{1}{108\,K_1}, 1\Big) ,
\end{equation}
let $r\in(0,r_0]$, and let $y_0,y_d$ be such that $\|y_0\|_{L^2(0,1)}\leq r_0$ and $\normC{y_d}\leq r_0$. Let $u\in B_r$, let $y_T$ be the corresponding state and $\lambda_T$ the corresponding adjoint state (without the feedback relation), and let $(z_T,v_T)$ solve \eqref{linearizedT}.

\smallskip

\emph{Step 1: state and adjoint bounds.} Since $\bar T=1$, $\sup_{i}\|u\|_{L^2(I_i;L^2(\omega))}\leq\|u\|_{L^\infty(0,T;L^2(\omega))}\leq r$, so Lemma~\ref{lem_y_upper_bound_by_sub_inter} gives
\begin{equation}\label{2.27}
	\|y_T\|_{C([0,T];L^2(0,1))}\leq K_{\mathrm{sl}}\left(\|y_0\|_{L^2(0,1)}+r\right)\leq 2K_{\mathrm{sl}}r_0\leq\frac{1}{4\sqrt2}<\frac{1}{2\sqrt2} .
\end{equation}
The adjoint equation in \eqref{maxim} only involves the state $y_T$; hence the proof of Lemma~\ref{regular_lambda} applies verbatim, with \eqref{y^T_upper_bounded_sub_inte} replaced by \eqref{2.27} (note that $4\sqrt2 K_{\mathrm{sl}}r_0\leq\frac{1}{2}<1$), and yields
\begin{equation}\label{bound_adjoint_estimate}
	\|\lambda_T\|_{L^\infty(0,T;L^2(0,1))}
	\leq K_1\left(\|y_0\|_{L^2(0,1)}+\normC{y_d}+r\right)\leq 3K_1r_0 .
\end{equation}

\smallskip

\emph{Step 2: bound for the linearized state.} Multiplying the equation \eqref{linearizedT} by $z_T$ and using $-\langle\partial_x(y_Tz_T),z_T\rangle=\int_0^1y_Tz_T\partial_xz_T$, the coercivity estimate \eqref{ineq:corecive} with $u=y_T(t)$, $\varepsilon=1$, the bound \eqref{2.27} and the Poincar\'e inequality, we get
\begin{equation}\label{zTenergy}
	\frac{1}{2}\frac{\d}{\d t}\norm{z_T}^2_{L^2(0,1)}+\frac14\norm{\partial_xz_T}^2_{L^2(0,1)}+3\norm{z_T}^2_{L^2(0,1)}\leq\int_\omega v_Tz_T\leq\frac{1}{2}\norm{v_T}^2_{L^2(\omega)}+\frac{1}{2}\norm{z_T}^2_{L^2(0,1)}
\end{equation}
(indeed, $1-\frac{3\sqrt2}{4}\|y_T\|_{L^2}\geq\frac58$, of which we keep $\frac14\|\partial_xz_T\|^2$ and convert the rest by Poincar\'e: $\frac38\pi^2-\frac{\sqrt2}{4}\|y_T\|_{L^2}\geq\frac{3\pi^2}{8}-\frac18\geq3$). Integrating \eqref{zTenergy} with $z_T(0)=0$ gives
\begin{equation}\label{zT}
	\sup_{t\in[0,T]}\|z_T(t)\|_{L^2(0,1)}
	+\|z_T\|_{L^2(0,T;H_0^1(0,1))}
	\leq 3\,\|v_T\|_{L^2(0,T;L^2(\omega))} .
\end{equation}

\emph{Step 3: the cross term.} By an integration by parts, the Agmon inequality and the Poincar\'e inequality, $\norm{z_T}^2_{L^\infty(0,1)}\leq2\norm{z_T}_{L^2}\norm{\partial_xz_T}_{L^2}\leq\frac2\pi\norm{\partial_xz_T}^2_{L^2}$, so that
\begin{equation}\label{eq_cross_T}
	\Big|\int_{0}^{T}\langle z_T\,\partial_x \lambda_T, z_T\rangle\d t\Big|=2\Big|\int_0^T\int_0^1\lambda_Tz_T\partial_xz_T\Big|
	\leq 2\sqrt{\tfrac2\pi}\,\|\lambda_T\|_{L^\infty(0,T;L^2 (0,1))} \|z_T\|_{L^2 (0,T;H_0^1(0,1))}^2 .
\end{equation}
Combining \eqref{eq_J_T_sec_der}, \eqref{bound_adjoint_estimate}, \eqref{zT} and \eqref{eq_cross_T},
\begin{equation*}
	J_T^{\prime\prime}(u)[v_T ,v_T ]\geq\big(1-54K_1r_0\big)\int_{0}^{T}\|v_T\|_{L^2(\omega)}^2\d t ,
\end{equation*}
and \eqref{convexT} follows from \eqref{r0def}. Since $B_r$ is convex and $J_T$ is twice Gateaux differentiable along segments of $B_r$ (see \cite[Theorem 5.2]{CT}), the estimate \eqref{convexT} implies the strict convexity of $J_T$ on $B_r$.

\medskip
\noindent\emph{Proof of (ii).} Let $\theta>0$. We first fix a preliminary radius and the corresponding constant of Lemma~\ref{Energy}(ii),
\begin{equation*}
	\bar r_\theta:=\min\big(\tfrac{1}{2\sqrt2\, C_2}, 1\big),
	\qquad
	C_3^0:=C_3(\theta,\bar r_\theta),
	\qquad
	K_z:=\big(\tfrac{1}{1-e^{-5\theta}}+1\big)^{1/2}+\sqrt2 ,
\end{equation*}
which is legitimate since $\sqrt2 C_2\bar r_\theta\leq\frac12$. We then define
\begin{equation}\label{rthetadef}
	r_\theta:=\min\Big(\bar r_\theta, \tfrac{1}{2K_z^2\,C_3^0\sqrt\theta\,(1+C_2)}\Big)\ \in(0,1] .
\end{equation}
Since $r_\theta\leq\bar r_\theta$, we have $\sqrt2 C_2r_\theta\leq\frac12$, so that Lemma~\ref{Energy}(ii) applies with $R_2=r_\theta$; moreover, inspecting the proof of Lemma~\ref{Energy}(ii), the constant $C_3(\theta,R_2)$ is nondecreasing in $R_2$ (the quantities $\kappa$ and $C_{R_2}$ are nonincreasing in $R_2$, hence $1/\kappa$ and $1/C_{R_2}$ are nondecreasing), so that $C_3(\theta,r_\theta)\leq C_3^0$. Assume $\normC{y_d}\leq r_\theta$ and let $u\in B^{per}_{r_\theta}$, with periodic state $y_\theta$, periodic adjoint $\lambda_\theta$ (Lemma~\ref{Energy}(ii)) and periodic linearized state $z_\theta$ solving \eqref{linearizedP} for a direction $v_\theta$ (the existence and uniqueness of the periodic solution $z_\theta$ follows, as for $\lambda_\theta$, from the contraction property of the period map implied by \eqref{zPenergy} below).

By Lemma~\ref{Energy}, $\|y_\theta\|_{C([0,\theta];L^2(0,1))}\leq C_2r_\theta\leq\frac{1}{2\sqrt2}$ and, using $C_3(\theta,r_\theta)\leq C_3^0$,
\begin{equation}\label{lambdaPbound}
	\|\lambda_\theta\|_{L^2(0,\theta;H_0^1(0,1))}\leq C_3^0\|y_\theta-y_d\|_{L^2(0,\theta;L^2(0,1))}\leq C_3^0\sqrt\theta\left(C_2r_\theta+\normC{y_d}\right)\leq C_3^0\sqrt\theta\,(1+C_2)\,r_\theta .
\end{equation}
Arguing exactly as in Step 2 above (the computation only uses $\|y_\theta\|_{C L^2}\leq\frac{1}{2\sqrt2}$),
\begin{equation}\label{zPenergy}
	\frac{\d}{\d t}\norm{z_\theta}^2_{L^2(0,1)}+\frac{1}{2}\norm{\partial_xz_\theta}^2_{L^2(0,1)}+5\norm{z_\theta}^2_{L^2(0,1)}\leq\norm{v_\theta}^2_{L^2(\omega)} .
\end{equation}
Integrating \eqref{zPenergy} over one period and using $z_\theta(0)=z_\theta(\theta)$ gives
$$\|z_\theta\|_{L^2(0,\theta;H_0^1(0,1))}\leq\sqrt2\,\|v_\theta\|_{L^2(0,\theta;L^2(\omega))}.$$
Moreover, Gr\"onwall's inequality applied to \eqref{zPenergy} over one period, combined with periodicity, gives $(1-e^{-5\theta})\|z_\theta(\theta)\|^2_{L^2}\leq\|v_\theta\|^2_{L^2(0,\theta;L^2(\omega))}$, whence
$$\sup_{[0,\theta]}\|z_\theta\|^2_{L^2}\leq\Big(\tfrac{1}{1-e^{-5\theta}}+1\Big)\|v_\theta\|^2_{L^2(0,\theta;L^2(\omega))}.$$
Altogether,
\begin{equation}\label{zP}
	\sup_{t\in[0,\theta]}\|z_\theta(t)\|_{L^2(0,1)}+\|z_\theta\|_{L^2(0,\theta;H_0^1(0,1))}\leq K_z\,\|v_\theta\|_{L^2(0,\theta;L^2(\omega))} .
\end{equation}
Finally, by the Agmon and Poincar\'e inequalities and the Cauchy--Schwarz inequality,
\begin{multline*}
	\Big|\int_{0}^{\theta}\langle z_\theta\,\partial_x \lambda_\theta, z_\theta\rangle\d t\Big|\leq\int_0^\theta\norm{z_\theta}_{L^\infty(0,1)}\norm{\lambda_\theta}_{H^1_0(0,1)}\norm{z_\theta}_{L^2(0,1)}\d t\\
	\leq\sup_{t\in[0,\theta]}\|z_\theta(t)\|_{L^2 (0,1)}\, \|z_\theta\|_{L^2 (0,\theta;H_0^1(0,1))}\, \|\lambda_\theta\|_{L^2 (0,\theta;H_0^1(0,1))} ,
\end{multline*}
so that, by \eqref{eq_J_theta_sec_der}, \eqref{lambdaPbound}, \eqref{zP} and \eqref{rthetadef},
\begin{equation*}
	J_\theta^{\prime\prime}(u)[v_\theta ,v_\theta ]\geq \big(1-K_z^2\,C_3^0\sqrt\theta(1+C_2)\,r_\theta\big)\int_{0}^{\theta}\|v_\theta\|_{L^2(\omega)}^2\d t\geq\frac{1}{2}\int_{0}^{\theta}\|v_\theta\|_{L^2(\omega)}^2\d t ,
\end{equation*}
which is \eqref{convexP}. The strict convexity of $J_\theta$ on the convex set $B^{per}_{r_\theta}$ follows. \hfill$\square$

\section{Proof of Proposition~\ref{local_turnpike}}\label{app:C}
Throughout the proof, we impose on $\varepsilon_M$ finitely many smallness conditions, introduced where they are needed; the final choice is recorded in \eqref{choice_epsM} at the end of the proof. First, since $\|y_0\|_{L^2(0,1)}\leq\frac1\pi\|y_0\|_{H^1_0(0,1)}$ by the Poincar\'e inequality, the condition \eqref{smallness_target_datum_varepsilon} with $\varepsilon_M\leq\varepsilon$ (the threshold of Proposition~\ref{local_uniqueness}) implies $\max\{\|y_0\|_{L^2(0,1)},\normC{y_d}\}\leq\varepsilon$; hence both $(OCP)_T$ and $(Per)_\theta$ have unique optimal triples, denoted by  $ \left(y_{T,*},u_{T,*},\lambda_{T,*}\right) $ and $ \left(y_{\theta,*},u_{\theta,*},\lambda_{\theta,*}\right) $ respectively, and the latter is extended by $\theta$-periodicity over the whole real line.
	
	We shall construct, by a fixed point argument, a solution of the optimality system \eqref{maxim}--\eqref{contrT} which is exponentially close to the periodic optimal triple, and then identify it with the (unique) optimal triple of $(OCP)_T$ thanks to Lemma~\ref{lem:convexity}. To this aim, we study the perturbation
	$$ \delta y:= y_{T}-y_{\theta,*}, \quad \delta \lambda:= \lambda_{T}-\lambda_{\theta,*}, \quad \delta u:= u_{T}-u_{\theta,*},$$
	where $(y_T,u_T,\lambda_T)$ denotes a solution of \eqref{maxim}--\eqref{contrT}: subtracting \eqref{maximp}--\eqref{contrP} from \eqref{maxim}--\eqref{contrT}, the triple $(\delta y,\delta\lambda,\delta u)$ solves
	\begin{equation}\label{pertopti0}
		\left\{
		\begin{aligned}
			&\partial_t \delta y  - \partial_{xx} \delta y + \partial_x \left(y_{\theta,*} \delta y\right) + \delta y\,\partial_x \delta y = \chi_\omega \delta \lambda  , \\
			&-\partial_t \delta \lambda  - \partial_{xx} \delta \lambda - y_{\theta,*}\, \partial_x \delta\lambda -\partial_x\lambda_{\theta,*}\, \delta y - \delta y\, \partial_x \delta\lambda = - \delta y , \\
			&\delta y (t,0)=\delta y (t,1)=0,\quad \delta \lambda (t,0)=\delta \lambda (t,1)=0, \\
			&\delta y (0,x) = y_0 (x) - y_{\theta,*}(0,x),\quad \delta \lambda (T,x)=-\lambda_{\theta,*}(T-\lfloor T/\theta\rfloor\theta),
		\end{aligned}
		\right.
	\end{equation}
	for $(t,x)\in (0,T)\times (0,1)$,
	together with $ \delta u = \chi_\omega\delta\lambda $ a.e. (we used the $\theta$-periodicity of $\lambda_{\theta,*}$ in the terminal condition).
	
	\smallskip
	\emph{The underlying linear-quadratic problem.} We set $H:=L^2(0,1)$, $U:=L^2(\omega)$, let $B\in\mathcal L(U,H)$ be the extension-by-zero operator, $B^*h=h|_\omega$, $Q:=I_U$, and, for $t\in[0,T]$,
	\begin{equation*} 
		A(t)z:=\partial_{xx} z-\partial_x (y_{\theta,*}(t,\cdot)z),\quad
		A^*(t)p=\partial_{xx} p+ y_{\theta,*}(t,\cdot)\partial_x p \qquad \forall z,p \in H^2(0,1) \cap H_0^1(0,1),
	\end{equation*}
	as well as
	\begin{equation*}
		G(t)z:=z-\partial_x\lambda_{\theta,*}(t)\,z,\quad \forall z\in H.
	\end{equation*}
	Lemma~\ref{lem:periodic_regularity} gives
	$\|\partial_x\lambda_{\theta,*}\|_{L^\infty((0,\theta)\times(0,1))} \leq C_\alpha \normC{y_d}$. Requiring $C_\alpha\varepsilon_M\leq\frac{1}{2}$, we obtain
	\begin{equation}\label{Gpositive}
		\langle G(t)z,z\rangle_H\geq \frac{1}{2}\|z\|_H^2,\qquad\forall z\in H,\ \text{a.e. }t .
	\end{equation}
	Thus $G(t)$ is a bounded, self-adjoint, uniformly positive operator on $H$; we set $C(t):=G(t)^{1/2}$, so that $G=C^*C$. We now make explicit the uniformity needed below. The operators $A(t)$ have the common dense domain $H^2(0,1)\cap H_0^1(0,1)$. By \eqref{periodic_yH1},
their drift coefficients are uniformly small in $C([0,\theta];H_0^1)$ when $\normC{y_d}$ is small. Multiplication of $z_t=A(t)z$ by $z$, followed by \eqref{ineq:corecive}, therefore gives
$\|U_A(t,s)\|_{\mathcal L(H)}\leq C e^{-\gamma(t-s)}$
with $C,\gamma>0$ uniform on a sufficiently small target ball. Hence $(A(\cdot),B)$ is uniformly exponentially stabilizable (the zero feedback already suffices). Moreover, \eqref{Gpositive} gives $\frac12I\leq C(t)^*C(t)\leq\frac32I$, so $(A(\cdot),C(\cdot))$ is uniformly exponentially detectable. These properties are stable under the above small perturbations; see \cite[Theorem 2.1]{WX}. The periodic Riccati results in \cite[Proposition 3.1 and Theorem 3.8]{D}, \cite[Proposition 3.4]{PI1} and \cite[Subsection 3.3]{TZZ}, applied with these uniform stabilizability and detectability bounds, consequently provide constants $M_1,M_2,M_3,\mu>0$ depending only on $\theta$ and $\omega$. The same constants apply to every time shift of the target, because such a shift only translates the periodic coefficients and leaves all the preceding bounds unchanged. In particular, the differential Riccati equation $\dot{P}+A^*P+PA-PBQ^{-1}B^*P+C^*C=0$ on $(0,T)$, with $P(T)=0$, admits a unique mild solution\footnote{We say that $P(\cdot)\in C([0,T];\Sigma^{+}(H))$ is a mild solution of the differential Riccati equation if, for each $t \in [0,T]$ and $h \in H$, $P(t)h=U_A^*(T,t)P(T)U_A(T,t)h-\int_t^TU_A^* (s,t)\left( P(s)BQ^{-1}B^*P(s)-C^*(s)C(s)\right)U_A(s,t)h \d s$.}  $P_{T}(\cdot) \in C\big([0,T];\Sigma^{+}(H)\big)$, and the corresponding $\theta$-periodic Riccati equation admits a unique $\theta$-periodic mild solution $P_\theta(\cdot) \in C\big([0,\theta];\Sigma^{+}(H)\big)$; moreover,
	\begin{equation}\label{prop1}
		\|P_\theta(t)-P_T(t)\|_{\mathcal{L}\left(H,H\right)} \leq M_1 e^{-2\mu (T-t)}, \quad 0 \leq t \leq T,
	\end{equation}
	\begin{equation}\label{prop2}
		\|U_\theta(t,s)\|_{\mathcal{L}\left(H,H\right)} \leq M_2 e^{-\mu (t-s)} ,\quad  0\leq s \leq t,
	\end{equation}
	where   $U_\theta(\cdot,\cdot)$ is the evolution operator generated by $F_\theta(\cdot)=A(\cdot)-B Q^{-1}B^*P_\theta(\cdot)$. In particular, the uniform constant $M_3$ may be chosen so that
	$$
	\sup_{t\in[0,T]}\|P_T(t)\|_{\mathcal L(H,H)} \leq \sup_{t\in[0,\theta]}\|P_\theta(t)\|_{\mathcal L(H,H)}+M_1
	\leq M_3<\infty.
	$$
	
	\smallskip
	\emph{The fixed point map.} For $ 0<M\leq 1 $, to be fixed below, set
	\begin{equation*}
		X_M:=\big\{ (\tilde{y},\tilde{\lambda})\ \mid\ \|\tilde{y}(t)\|_{H_0^1(0,1)}+\|\tilde{\lambda}(t)\|_{H_0^1(0,1)}\leq M\big(e^{-\mu t }+e^{-\mu(T-t)}\big),\ \text{a.e. } t \in[0,T]\big\}
	\end{equation*}
	and, for $ (\tilde{y},\tilde{\lambda}) \in X_M $, define $R_1(\tilde{y}):= - \tilde{y}\,\partial_x \tilde{y}$ and $R_2(\tilde{y},\tilde{\lambda}):= \tilde{y}\, \partial_x \tilde\lambda$.
	Since, by the Agmon, Young and Poincar\'e inequalities, $\|y_1\partial_x y_2\|_{L^2(0,1)}\leq   \|y_1\|_{H_0^1(0,1)} \| y_2\|_{H_0^1(0,1)}$ for all $y_1,y_2\in H^1_0(0,1)$, we have, for each $ (\tilde{y},\tilde{\lambda}) \in X_M $ and a.e.\ $ t \in[0,T] $,
	\begin{equation}\label{steady_remainesti}
			\|R_1(\tilde{y})(t)\|_{L^2(0,1)}+\|R_2(\tilde{y},\tilde{\lambda})(t)\|_{L^2(0,1)} \leq 4 M^2\big(e^{-2\mu t }+e^{-2\mu(T-t)}\big).
	\end{equation}
	For each $ (\tilde{y},\tilde{\lambda}) \in X_M $, consider the linear system obtained by freezing the quadratic remainders in \eqref{pertopti0}:
	\begin{equation}\label{pertopti}
		\left\{
		\begin{aligned}
			&\partial_t \delta y  - \partial_{xx} \delta y + \partial_x \left(y_{\theta,*}\, \delta y\right) = \chi_\omega \delta{\lambda} + R_1(\tilde{y})  ,\\
			&-\partial_t \delta \lambda  - \partial_{xx} \delta \lambda - y_{\theta,*}\, \partial_x \delta\lambda  = - \delta{y} + \partial_x\lambda_{\theta,*}\, \delta y  + R_2(\tilde{y},\tilde{\lambda}) , \\
			&\delta y (t,0)=\delta y (t,1)=0,\quad \delta \lambda (t,0)=\delta \lambda (t,1)=0, \\
			&\delta y (0,x) = y_0 (x) - y_{\theta,*}(0,x),\quad \delta \lambda (T,x)=-\lambda_{\theta,*}(T-\lfloor T/\theta\rfloor\theta) ,
		\end{aligned}
		\right.
	\end{equation}
	for all $(t,x)\in (0,T)\times (0,1)$.
	The system \eqref{pertopti} is the optimality system of the linear quadratic optimal control problem
	\begin{equation}\label{standardLQ} 
		\inf_{v\in L^2(0,T;L^2(\omega))}\;\frac{1}{2}\int_0^T 
		\big(\langle G(t) \delta y(t), \delta y(t)\rangle +  \|v(t)\|_{L^2(\omega)}^2 - 2\langle R_2(\tilde{y},\tilde{\lambda})(t),\delta y(t)\rangle\big)\d t + \langle \lambda_{\theta,*}(T),\delta y(T)\rangle,
	\end{equation}
	where $\delta y$ solves the state equation in \eqref{pertopti} with $\chi_\omega\delta\lambda$ replaced by $\chi_\omega v$; since $G\geq\frac{1}{2}I$ by \eqref{Gpositive}, this problem is strictly convex and coercive, hence it has a unique minimizer, and the system \eqref{pertopti} has a unique solution $(\delta y,\delta\lambda)$, given by the optimal state and adjoint state of \eqref{standardLQ}. This defines a (single-valued) map
	\begin{equation*}
		\mathcal{K}:X_M\to L^2(0,T;L^2(0,1))\times L^2(0,T;L^2(0,1)),\qquad \mathcal{K}(\tilde{y},\tilde{\lambda}):=(\delta y,\delta \lambda) .
	\end{equation*}
	
	\begin{claim}\label{claim}
		Under a smallness condition on $\|y_0\|_{H^1_0(0,1)}$ and $\normC{y_d}$, the map $ \mathcal{K} $ has a fixed point in $ X_M $ for some appropriate $ M>0$.
	\end{claim}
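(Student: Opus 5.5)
The plan is to show that $\mathcal K$ maps $X_M$ into itself and is continuous and compact for a topology in which $X_M$ is convex and closed, and then to apply Schauder's theorem. A contraction argument in the $H^1_0$-weighted norm is a fallback.

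\emph{Step 1: Riccati decoupling of \eqref{pertopti}.} Since \eqref{pertopti} is the optimality system of the LQ problem \eqref{standardLQ}, I would write $\delta\lambda(t)=-P_T(t)\delta y(t)+h(t)$, where $h$ solves the backward linear equation
\[
-\dot h=(A-BB^*P_T)^*h+\text{(terms in $R_2$ and $P_TR_1$)},\qquad h(T)=-\lambda_{\theta,*}(T),
\]
with the sign conventions fixed by \eqref{standardLQ}. The closed-loop operator generated by $A-BB^*P_T$ differs from the one generated by $F_\theta$ by a perturbation bounded by $M_1e^{-2\mu(T-\cdot)}$, thanks to \eqref{prop1}. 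A Gronwall argument then transfers \eqref{prop2} to it, with a constant $\tilde M_2$ and the rate $\mu$ (reducing $\mu$ slightly if needed). By Duhamel's formula and the bound \eqref{steady_remainesti}, I obtain
\[
\|h(t)\|_H\lesssim \|\lambda_{\theta,*}\|_{C L^2}\,e^{-\mu(T-t)}+M^2\big(e^{-\mu t}+e^{-\mu(T-t)}\big).
\]
The convolution $\int e^{-\mu|t-s|}e^{-2\mu s}\d s$ is bounded by $Ce^{-\mu t}$ up to a harmless factor, which I handle by using rate $2\mu$ in the remainders against rate $\mu$ in the kernel. Next, $\delta y$ solves a forward closed-loop equation with source $Bh+R_1$. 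This gives $\|\delta y(t)\|_H\lesssim \|y_0-y_{\theta,*}(0)\|e^{-\mu t}+(\text{same})$, and then $\|\delta\lambda(t)\|_H\le M_3\|\delta y\|+\|h\|$.

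\emph{Step 2: upgrading to $H^1_0$.} The space $X_M$ is defined with $H^1_0$ norms, so the $L^2$ bounds are not enough. I would apply parabolic smoothing on unit windows. On $[t-1,t]$ I test the $\delta y$ equation by $-\partial_{xx}\delta y$ and use the bound on $y_{\theta,*}$ in $C H^1_0$ from Lemma~\ref{lem:periodic_regularity}. Near $t=0$ I use $\|y_0\|_{H^1_0}$ directly. The same applies to $\delta\lambda$ backwards in time, using $\lambda_{\theta,*}(T)\in H^1_0$ from \eqref{periodic_regularity}. This yields
\[
\|\delta y(t)\|_{H^1_0}+\|\delta\lambda(t)\|_{H^1_0}\le C_*\big(\|y_0\|_{H^1_0}+\normC{y_d}+M^2\big)\big(e^{-\mu t}+e^{-\mu(T-t)}\big),
\]
where $C_*$ depends only on $\theta,\omega$. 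The size of the data is controlled by \eqref{periodic_regularity}: $\|y_{\theta,*}(0)\|_{H^1_0}+\|\lambda_{\theta,*}\|_{C H^1_0}\le C_\alpha\normC{y_d}$. The inequality above holds only a.e.\ in $t$, which is consistent with the definition of $X_M$.

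\emph{Step 3: choice of constants and fixed point.} I take $M_0:=\min(1,1/(2C_*))$ so that $C_*M^2\le M/2$. For a given $M\le M_0$ I choose $\varepsilon_M$ with $C_*\varepsilon_M\le M/2$. Then $\mathcal K(X_M)\subset X_M$. I view $X_M$ as a convex, closed, bounded subset of $L^2(0,T;H^1_0)^2$. Continuity of $\mathcal K$ follows from the linearity of \eqref{pertopti} and from the local Lipschitz bounds on $R_1$ and $R_2$. Compactness comes from the extra regularity $\delta y,\delta\lambda\in\mathcal Y((0,T))$ together with Aubin--Lions on the bounded interval $(0,T)$. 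Schauder's theorem then gives the fixed point. Alternatively, on a slightly weighted space the map is a contraction with Lipschitz constant $\lesssim C_*M<1$, which also yields the uniqueness useful when identifying the result with the optimum through Lemma~\ref{lem:convexity}.

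The main obstacle is Step 2 for the adjoint. The term $\partial_x\lambda_{\theta,*}\,\delta y$ and the remainder $R_2=\tilde y\,\partial_x\tilde\lambda$ require the $H^1_0$, not merely $L^2$, control of $\tilde\lambda$. Keeping the exponential weights intact through the smoothing estimates, uniformly in $T$, is the delicate part. I would first try to handle it with unit-window $H^1$ energy estimates combined with the $L^\infty$-in-$x$ bound \eqref{periodic_lambdax}.
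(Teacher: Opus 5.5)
Your proposal is correct and follows the paper's own proof of the claim: Riccati decoupling of \eqref{pertopti} with the closed loop controlled through \eqref{prop1}--\eqref{prop2}, weighted Gr\"onwall bounds in $L^2$, an $H^1_0$ upgrade on unit windows using Lemma~\ref{lem:periodic_regularity}, the choice $M_0=\min(1,1/(2C_\star))$ with $\varepsilon_M\lesssim M$, and Schauder's theorem. The only real difference is that you apply Schauder in $L^2(0,T;H^1_0(0,1))^2$ rather than in $L^2(0,T;L^2(0,1))^2$. In your topology, continuity is an immediate Lipschitz bound on $R_1,R_2$ coming from the $L^\infty_tH^1_0$ envelope, and compactness needs the $\mathcal Y((0,T))$ bound that the $-\partial_{xx}$ energy estimates already provide. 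The paper instead needs only a $W(0,T)$ bound for compactness, but then has to prove continuity by a weak-convergence argument. Your optional contraction variant in the weighted $H^1_0$-sup norm also works, but it is not needed.
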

	We prove Claim~\ref{claim} in Steps 1--3 below; in Steps 4--5 we then show that the fixed point provides a solution of \eqref{maxim}--\eqref{contrT} which coincides with the unique optimal triple of $(OCP)_T$, thereby completing the proof: indeed, for the fixed point, the Poincar\'e inequality and $\delta u=\chi_\omega\delta\lambda$ give
	\begin{equation}\label{conclusionturnpike}
		\|\delta{y}(t)\|_{L^2(0,1)}+\|\delta{\lambda}(t)\|_{L^2(0,1)}+\|\delta{u}(t)\|_{L^2(\omega)}\leq \frac{2}{\pi}\,M\big(e^{-\mu t }+e^{-\mu(T-t)}\big)\leq M\big(e^{-\mu t }+e^{-\mu(T-t)}\big)
	\end{equation}
	for a.e.\ $t\in[0,T]$, which is \eqref{turnpike}.
	
	\medskip
	\textbf{Step 1: $\mathcal K$ is a continuous and compact self-mapping candidate.} The set $X_M$ is a bounded, convex and closed subset of $L^2(0,T;L^2(0,1))^2$: closedness holds because, if a sequence of $X_M$ converges in $L^2(0,T;L^2(0,1))^2$, its uniform $L^\infty(0,T;H^1_0(0,1))$ envelope provides a weak-$\star$ limit in $L^\infty(0,T;H^1_0(0,1))^2$, which is identified with the strong limit and still obeys the pointwise envelope defining $X_M$. Moreover, the right-hand sides in \eqref{pertopti} are bounded in $L^2(0,T;L^2(0,1))$ uniformly with respect to $(\tilde y,\tilde\lambda)\in X_M$ (by \eqref{steady_remainesti}), so that, by parabolic energy estimates applied to each equation separately (the required uniform bound on $(\delta y,\delta\lambda)$ in $C([0,T];L^2(0,1))^2$ is provided by the estimates \eqref{yesti}--\eqref{adesti} of Step~2, which do not depend on the present step), $\mathcal K(X_M)$ is bounded in $W(0,T)^2$, hence relatively compact in $ L^2(0,T;L^2(0,1))^2 $ by the Aubin--Lions lemma. Finally, $\mathcal K$ is continuous on $X_M$ for the $L^2(0,T;L^2(0,1))^2$ topology. Indeed, if $(\tilde y_n,\tilde\lambda_n)\to(\tilde y,\tilde\lambda)$ in this topology, the uniform $L^\infty(0,T;H_0^1(0,1))$ bound and the one-dimensional Agmon inequality give strong convergence in $L^2(0,T;L^\infty(0,1))$. After extraction, the spatial derivatives converge weakly in $L^2(0,T;L^2(0,1))$; consequently, $R_1(\tilde y_n)\rightharpoonup R_1(\tilde y)$ and $R_2(\tilde y_n,\tilde\lambda_n)\rightharpoonup R_2(\tilde y,\tilde\lambda)$ in $L^2(0,T;L^2(0,1))$.
The corresponding solutions of \eqref{pertopti} converge weakly in $W(0,T)^2$ to the solution associated with the limiting sources and strongly in $L^2(0,T;L^2(0,1))^2$ thanks to the compact embedding of $W(0,T)$ into $L^2(0,T;L^2(0,1))$. Uniqueness of the limiting linear system removes the need to extract a subsequence and proves the continuity of $\mathcal K$.
	
	\medskip
	\textbf{Step 2: invariance $ \mathcal{K}(X_M)\subset X_M $ for an appropriate $ M >0$.} Let $ (\tilde{y},\tilde{\lambda}) \in X_M $ and $(\delta y,\delta\lambda)=\mathcal K(\tilde y,\tilde\lambda)$. Set $ h  := -\delta\lambda - P_T\, \delta y$ (in the weak sense). A direct computation using the Riccati equation satisfied by $P_T$ gives the following identity in the pivot space $H$:
	\begin{equation*}
		-\dot h=\big(A^*-P_TBQ^{-1}B^*\big)h+P_TR_1(\tilde y)-R_2(\tilde y,\tilde\lambda),
		\qquad h(T)=\lambda_{\theta,*}(T-\lfloor T/\theta\rfloor\theta).
	\end{equation*}
	No boundary condition is asserted directly for $h$, since $P_T\delta y$ is an $H$-valued Riccati term. Writing $P_TBQ^{-1}B^*=P_\theta BQ^{-1}B^*-(P_\theta-P_T)BQ^{-1}B^*$ and using the evolution operator $U_\theta$ of \eqref{prop2}, one obtains the mild representation
	\begin{multline*}
			h(t)=U^*_{\theta}(T,t)\,\lambda_{\theta,*}(T-\lfloor T/\theta\rfloor\theta) \\+\int_{t}^{T}U^*_\theta(\tau,t)\big((P_\theta(\tau)-P_T(\tau))BQ^{-1}B^*h(\tau) + P_T(\tau) R_1(\tilde{y})(\tau)  - R_2(\tilde{y},\tilde{\lambda})(\tau)\big)\d\tau .
	\end{multline*}
	Set $s:=T-t$ and $\Lambda_0:=\|\lambda_{\theta,*}(T-\lfloor T/\theta\rfloor\theta)\|_{L^2(0,1)}$. By \eqref{prop1}, \eqref{prop2} and \eqref{steady_remainesti}, the function $\phi(s):=e^{\mu s}\|h(T-s)\|_{L^2(0,1)}$ satisfies
	\begin{equation*}
		\phi(s)\leq M_2\Lambda_0+M_1M_2\int_0^se^{-2\mu\tau}\phi(\tau) \d\tau+CM_2M^2\Big(\frac1\mu+\frac{1}{3\mu}e^{-2\mu(T-s)}e^{\mu s}\Big)
	\end{equation*}
	(we used $\int_0^se^{\mu\tau}e^{-2\mu\tau}\d\tau\leq\frac1\mu$ and $\int_0^se^{\mu\tau}e^{-2\mu(T-\tau)}\d\tau\leq\frac{1}{3\mu}e^{-2\mu(T-s)}e^{\mu s}$), so that Gr\"onwall's inequality (the kernel $M_1M_2e^{-2\mu\tau}$ having integral at most $\frac{M_1M_2}{2\mu}$) yields
	\begin{equation}\label{hesti}
		\|h(t)\|_{L^2(0,1)}\leq C(\Lambda_0+M^2) e^{-\mu (T-t)}+ C M^2 e^{-2\mu t},\quad \forall t \in[0,T].
	\end{equation}
	Next, recalling that $\delta\lambda=-h-P_T\delta y$ and rewriting the state equation in \eqref{pertopti} as $\partial_t \delta y-\partial_{xx} \delta y + \partial_x (y_{\theta,*} \delta y) +  \chi_\omega P_\theta \delta y =  \chi_\omega(P_\theta-P_T) \delta y - \chi_\omega h  + R_1(\tilde{y})$, we get
	\begin{equation*}
		\delta y(t)= U_\theta(t,0)(y_0 - y_{\theta,*}(0)) + \int_{0}^{t} U_\theta(t,\tau)\big( \chi_\omega(P_\theta(\tau)-P_T(\tau)) \delta y(\tau) - \chi_\omega h(\tau)  + R_1(\tilde{y})(\tau)\big)\d\tau ,
	\end{equation*}
	and the same weighted Gr\"onwall argument (now with the weight $e^{\mu t}$), combined with \eqref{prop1}, \eqref{prop2}, \eqref{steady_remainesti} and \eqref{hesti}, gives
	\begin{equation}\label{yesti}
			\|\delta y(t)\|_{L^2(0,1)}\leq C(d_0^0 + \Lambda_0 + M^2) \big(e^{-\mu t} + e^{-\mu(T-t)} \big),\qquad d_0^{0}:=\|y_0 - y_{\theta,*}(0)\|_{L^2(0,1)} ,
	\end{equation}
	whence also, using $\delta\lambda = -h - P_T \delta y$ and $\sup_t\|P_T(t)\|_{\mathcal L(H,H)}\leq M_3$,
	\begin{equation}\label{adesti}
		\|\delta\lambda(t)\|_{L^2(0,1)}\leq C(d_0^0 + \Lambda_0 + M^2)\big(e^{-\mu t} + e^{-\mu(T-t)}\big),\qquad \forall t\in[0,T].
	\end{equation}
	
	We now upgrade \eqref{yesti}--\eqref{adesti} to $H^1_0$ norms. First, by \eqref{y_p_H01} and \eqref{uthetasmall},
	\begin{equation}\label{eq_small_y_theta}
		\sup_{t\in[0,\theta]}\|\partial_x y_{\theta,*}(t)\|_{L^2(0,1)}^2
		\leq \Big(4+\frac1\theta\Big) e^{\frac{8\theta}{\pi}\normC{y_d}^2 }\,\theta\,\normC{y_d}^2 ,
	\end{equation}
	so that, requiring $\varepsilon_M$ to be small enough, we may assume
	$ \tilde{\mu}:=1-8\sup_{t\in[0,\theta]}\|\partial_x y_{\theta,*}(t)\|_{L^2(0,1)}^2\geq\frac{1}{2} $.
	Multiplying the state equation in \eqref{pertopti} by $-\partial_{xx}\delta y$ and using, by the Agmon and Poincar\'e inequalities,
	\begin{equation*}
		\|\partial_x(y_{\theta,*}\delta y)\|^2_{L^2}\leq2\|y_{\theta,*}\|^2_{L^\infty}\|\partial_x\delta y\|^2_{L^2}+2\|\partial_xy_{\theta,*}\|^2_{L^2}\|\delta y\|^2_{L^\infty}\leq \frac{8}{\pi}\|\partial_xy_{\theta,*}\|^2_{L^2}\|\partial_x\delta y\|^2_{L^2} ,
	\end{equation*}
	we obtain, by Young's inequality (and $\frac{16}{\pi}\leq8$),
	$$
		\frac{\d}{\d t}\|\delta y(t)\|_{H_0^1(0,1)}^2 +  \|\partial_{xx} \delta y(t)\|_{L^2(0,1)}^2
		\leq 8\|\partial_x y_{\theta,*}(t)\|_{L^2}^2\|\partial_x\delta y(t)\|_{L^2}^2+4\|\delta\lambda(t)\|_{L^2}^2 + 4\|R_1(\tilde y)(t)\|_{L^2}^2 ,
	$$
	whence, by $\|\partial_{xx}\delta y\|^2_{L^2}\geq\|\partial_x\delta y\|^2_{L^2}$ (Poincar\'e) and the definition of $\tilde\mu$, for all $0\leq s\leq t$,
	\begin{equation}\label{eq_dy_H01_decay}
		\|\delta y(t)\|_{H_0^1(0,1)}^2\leq e^{-\tilde{\mu}(t-s)}\|\delta y(s)\|_{H_0^1(0,1)}^2 + 4\int_s^t \big(\|\delta\lambda(\tau)\|_{L^2(0,1)}^2+ \|R_1(\tilde y)(\tau)\|_{L^2(0,1)}^2\big)\d \tau .
	\end{equation}
	Similarly, multiplying by $\delta y$ gives, for all $0\leq s\leq t$,
	\begin{equation}\label{eq_dy_L2H1}
		\tilde{\mu} \int_s^t\|\partial_{x} \delta y(\tau)\|_{L^2(0,1)}^2 \d \tau \leq \|\delta y(s)\|_{L^2(0,1)}^2 +4 \int_s^t \big(\|\delta\lambda(\tau)\|_{L^2(0,1)}^2 + \|R_1(\tilde y)(\tau)\|_{L^2(0,1)}^2\big)\d \tau .
	\end{equation}
	For $t\geq1$, integrating \eqref{eq_dy_H01_decay} with respect to $s$ over $(t-1,t)$ and estimating $\int_{t-1}^t\|\delta y(s)\|^2_{H_0^1} \d s$ by \eqref{eq_dy_L2H1}, we get
	\begin{equation*}
		\|\delta y(t)\|_{H_0^1(0,1)}^2\leq \frac{1}{\tilde{\mu}}\|\delta y(t-1)\|_{L^2(0,1)}^2+ \Big(4+\frac{4}{\tilde{\mu}}\Big)\int_{t-1}^t \big(\|\delta\lambda\|_{L^2(0,1)}^2+ \|R_1(\tilde y)\|_{L^2(0,1)}^2\big)\d s ,
	\end{equation*}
	which, combined with \eqref{steady_remainesti}, \eqref{yesti} and \eqref{adesti}, gives the desired $H^1_0$-estimate for $\delta y(t)$, $t\geq1$; for $t\in[0,1]$, taking $s=0$ in \eqref{eq_dy_H01_decay} gives the same bound with $\|\delta y(0)\|_{H_0^1}=\|y_0-y_{\theta,*}(0)\|_{H_0^1}$ in place of $d_0^0$. The estimate for $\delta\lambda$ is obtained in the same way, working in reversed time: the terminal datum $-\lambda_{\theta,*}(T-\lfloor T/\theta\rfloor\theta)$ belongs to $H^1_0(0,1)$ by Lemma~\ref{lem:periodic_regularity}, and the source terms $-\delta y+\partial_x\lambda_{\theta,*} \delta y+R_2(\tilde y,\tilde\lambda)$ are bounded in $L^2(0,1)$ by $C(d_0^0+\Lambda_0+M^2)(e^{-\mu t}+e^{-\mu(T-t)})$ thanks to \eqref{yesti}, \eqref{steady_remainesti} and $\|\partial_x\lambda_{\theta,*}\|_{L^\infty}\leq\frac{1}{2}$. Altogether, setting
	\begin{equation*}
		d_0:=\|y_0 - y_{\theta,*}(0)\|_{H_0^1(0,1)},\qquad \Lambda:=\|\lambda_{\theta,*}(T-\lfloor T/\theta\rfloor\theta)\|_{H_0^1(0,1)} ,
	\end{equation*}
	there exists $C_\star\geq1$, independent of $T$ and of $M$, such that
	\begin{equation}\label{eq_dydlambdaH01}
			\|\delta y(t)\|_{H_0^1(0,1)}+\|\delta\lambda(t)\|_{H_0^1(0,1)}\leq C_\star (d_0 + \Lambda + M^2) \big(e^{-\mu t} + e^{-\mu(T-t)} \big),\quad\text{a.e. } t \in[0,T].
	\end{equation}
	We now fix $M_0:=\min\big(1, \frac{1}{2C_\star}\big)$ and let $M\in(0,M_0]$, so that $C_\star M^2\leq\frac M2$. By \eqref{eq_small_y_theta} and Lemma~\ref{lem:periodic_regularity}, there is $C''=C''(\theta,\alpha)>0$ such that $d_0+\Lambda\leq C'' (\|y_0\|_{H^1_0(0,1)}+\normC{y_d})\leq C''\varepsilon_M$; hence, requiring $2C_\star C''\varepsilon_M\leq M$, the estimate \eqref{eq_dydlambdaH01} gives $\|\delta y(t)\|_{H^1_0}+\|\delta\lambda(t)\|_{H^1_0}\leq M(e^{-\mu t}+e^{-\mu(T-t)})$, i.e., $ \mathcal{K}(X_M)\subset X_M $.
	
	\medskip
	
	\textbf{Step 3: existence of a fixed point.} By Steps 1--2 and the Schauder fixed point theorem (see \cite[Theorem 2.1, p.~6]{Barbu1993}), there exists $ (\delta{y},\delta{\lambda})\in X_M $ such that $ (\delta{y},\delta{\lambda})= \mathcal{K}(\delta{y},\delta{\lambda}) $. In other words, $(\delta y,\delta\lambda)$ solves the nonlinear system \eqref{pertopti0} (indeed, \eqref{pertopti} with $(\tilde y,\tilde\lambda)=(\delta y,\delta\lambda)$ is precisely \eqref{pertopti0}), and $\|\delta{y}(t)\|_{H_0^1(0,1)}+\|\delta{\lambda}(t)\|_{H_0^1(0,1)}\leq M\big(e^{-\mu t }+e^{-\mu(T-t)}\big)$ a.e.
	
	\medskip
	
	\textbf{Step 4: a refined estimate at the fixed point.} Let $(\delta y,\delta\lambda)$ be the fixed point of Step 3 and let
	\begin{equation*}
		A:=\mathop{\mathrm{ess\,sup}}_{t\in[0,T]}\ \frac{\|\delta y(t)\|_{H_0^1(0,1)}+\|\delta\lambda(t)\|_{H_0^1(0,1)}}{e^{-\mu t }+e^{-\mu(T-t)}}\ \leq M
	\end{equation*}
	be the smallest constant for which $(\delta y,\delta\lambda)\in X_A$. Since $(\tilde y,\tilde\lambda)=(\delta y,\delta\lambda)$, the remainder bounds \eqref{steady_remainesti} hold with $M^2$ replaced by $A^2$, and all the estimates of Step 2 can be run again with this improved input; the outcome \eqref{eq_dydlambdaH01} then reads, by minimality of $A$,
	\begin{equation*}
		A \leq C_\star (d_0+\Lambda+A^2) \leq C_\star (d_0+\Lambda)+\frac{A}{2} ,
	\end{equation*}
	where we used $C_\star A\leq C_\star M\leq\frac{1}{2}$. Hence, setting $C^\flat := 2C_\star C''$,
	\begin{equation*}
		A \leq 2C_\star(d_0+\Lambda) \leq 2C_\star C''\big(\|y_0\|_{H_0^1(0,1)}+\normC{y_d}\big) \leq C^\flat\varepsilon_M .
	\end{equation*}
	In particular, the control $u^\sharp:=u_{\theta,*}+\chi_\omega\delta\lambda$ satisfies, by Lemma~\ref{Energy}(ii) (applied with $R_2=r_\theta$, which is legitimate since $\sqrt\theta\normC{y_d}\leq\sqrt\theta\,\varepsilon\leq r_\theta$ by \eqref{threshold1}) and the Poincar\'e inequality,
	\begin{multline}\label{usharp}
		\|u^\sharp\|_{L^\infty(0,T;L^2(\omega))}\leq \|\lambda_{\theta,*}\|_{C([0,\theta];L^2(0,1))}+\sup_t\|\delta\lambda(t)\|_{L^2(0,1)}\\
		\leq C_\lambda\normC{y_d}+A\leq (C_\lambda+C^\flat)\varepsilon_M=C^\dagger\varepsilon_M ,
	\end{multline}
	with $C_\lambda:=C_3\sqrt\theta\,(C_2\sqrt\theta+1)$ and $C^\dagger=C_\lambda+C^\flat$.
	
	\medskip
	
	\textbf{Step 5: identification with the optimal triple of $(OCP)_T$.} Define
	\begin{equation*}
		\big(y^\sharp,\,u^\sharp,\,\lambda^\sharp\big):=\big(y_{\theta,*}+\delta y, u_{\theta,*}+\chi_\omega\delta\lambda, \lambda_{\theta,*}+\delta\lambda\big) .
	\end{equation*}
	Since $(\delta y,\delta\lambda)$ solves \eqref{pertopti0}, adding \eqref{maximp}--\eqref{contrP} and expanding the nonlinear terms ($y^\sharp\partial_xy^\sharp=y_{\theta,*}\partial_xy_{\theta,*}+\partial_x(y_{\theta,*}\delta y)+\delta y\partial_x\delta y$ and $y^\sharp\partial_x\lambda^\sharp=y_{\theta,*}\partial_x\lambda_{\theta,*}+y_{\theta,*}\partial_x\delta\lambda+\partial_x\lambda_{\theta,*} \delta y+\delta y\,\partial_x\delta\lambda$), one checks that $(y^\sharp,u^\sharp,\lambda^\sharp)$ solves the optimality system \eqref{maxim}--\eqref{contrT}: in particular $y^\sharp(0)=y_0$ and, by the $\theta$-periodicity of $\lambda_{\theta,*}$, $\lambda^\sharp(T)=\lambda_{\theta,*}(T-\lfloor T/\theta\rfloor\theta)+\delta\lambda(T)=0$. Thus $y^\sharp$ is the state associated with the control $u^\sharp$, $\lambda^\sharp$ is the corresponding adjoint state (the unique solution of the linear backward equation in \eqref{maxim} with state $y^\sharp$ and zero terminal datum), and $u^\sharp=\chi_\omega\lambda^\sharp$; by the expression of the first Gateaux derivative (see Subsection~\ref{sec:local_uniqueness}), the latter identity means that
	\begin{equation*}
		{J_T}'(u^\sharp)[v]=\int_0^T\big\langle u^\sharp(t)-\lambda^\sharp(t)|_\omega, v(t)\rangle_{L^2(\omega)}\d t=0\qquad\forall v\in L^2(0,T;L^2(\omega)) .
	\end{equation*}
	Now, requiring $\varepsilon_M\leq r_0$, $C^\dagger\varepsilon_M\leq r_0$ and $K_{1,2}\,\varepsilon_M\leq r_0$ (where $K_{1,2}$ is the constant of Lemma~\ref{uniform_bound_optima}(ii)), we have:
\begin{itemize}[leftmargin=*,parsep=0.5mm,itemsep=0.5mm,topsep=0.5mm]
	\item $\|y_0\|_{L^2(0,1)}\leq\varepsilon_M\leq r_0$ and $\normC{y_d}\leq r_0$, so that Lemma~\ref{lem:convexity}(i) applies and $J_T$ is strictly convex on $B_{r_0}$;
	\item $u^\sharp\in B_{r_0}$, by \eqref{usharp};
	\item the optimal control $u_{T,*}$ of $(OCP)_T$ belongs to $B_{r_0}$ as well, since, by \eqref{uniform_bound_u^T} and the Poincar\'e inequality, 
	$$
	\|u_{T,*}\|_{L^\infty(0,T;L^2(\omega))}\leq K_{1,2}(\|y_0\|_{L^2(0,1)}+\normC{y_d})\leq K_{1,2}\Big(\frac{1}{\pi}\|y_0\|_{H^1_0(0,1)}+\normC{y_d}\Big)\leq K_{1,2}\,\varepsilon_M\leq r_0
	$$ 
(the hypotheses \eqref{ub_small_data} and \eqref{eq_3.19} hold thanks to $\varepsilon_M\leq\varepsilon$ and \eqref{threshold1}).
\end{itemize}
	For every $w\in B_{r_0}$, the segment $[u^\sharp,w]$ is contained in the convex set $B_{r_0}$, along which $J_T$ is convex; hence $J_T(w)\geq J_T(u^\sharp)+{J_T}'(u^\sharp)(w-u^\sharp)=J_T(u^\sharp)$, i.e., $u^\sharp$ minimizes $J_T$ over $B_{r_0}$. Since $u_{T,*}\in B_{r_0}$ minimizes $J_T$ globally, it also minimizes $J_T$ over $B_{r_0}$; by strict convexity of $J_T$ on $B_{r_0}$, the minimizer over $B_{r_0}$ is unique, and therefore
$u^\sharp=u_{T,*}$, $y^\sharp=y_{T,*}$ and $\lambda^\sharp=\lambda_{T,*}$.
	Consequently $(\delta y,\delta\lambda,\delta u)=(y_{T,*}-y_{\theta,*},\lambda_{T,*}-\lambda_{\theta,*},u_{T,*}-u_{\theta,*})$, and \eqref{conclusionturnpike} concludes the proof of \eqref{turnpike}.
	
	It remains to choose $\varepsilon_M$. Collecting the conditions imposed along the proof, it suffices to take
	\begin{equation}\label{choice_epsM}
		\varepsilon_M:=\min\Big(\varepsilon, \varepsilon_\star, r_\alpha, \frac{1}{2C_\alpha}, \varepsilon_\theta, \frac{M}{2C_\star C''}, r_0, \frac{r_0}{C^\dagger}, \frac{r_0}{K_{1,2}}\Big),
	\end{equation}
	where $\varepsilon$ is the threshold \eqref{threshold1} of Proposition~\ref{local_uniqueness}, $\varepsilon_\star$ is the stabilizability/detectability threshold of \cite[Theorem 2.1]{WX}, $r_\alpha$ and $C_\alpha$ are given by Lemma~\ref{lem:periodic_regularity}, $\varepsilon_\theta>0$ is any number such that $8\big(4+\frac1\theta\big)e^{8\theta/\pi}\theta\,\varepsilon_\theta^2\leq\frac{1}{2}$ (which guarantees $\tilde\mu\geq\frac{1}{2}$ by \eqref{eq_small_y_theta}), and $C_\star, C'', r_0, C^\dagger, K_{1,2}$ are as above. \qed

\end{appendices}


\end{document}